\newtheorem{lemma}{Lemma}[section]
\newtheorem{theorem}[lemma]{Theorem}
\newtheorem{corollary}[lemma]{Corollary}
\newcommand{\1}{\ensuremath{\mathbbm{1}}}
\providecommand{\N}{{\ensuremath{\mathbbm{N}}}}
\providecommand{\Z}{{\ensuremath{\mathbbm{Z}}}}
\providecommand{\R}{{\ensuremath{\mathbbm{R}}}}
\providecommand{\E}{{\ensuremath{\mathbb{E}}}}
\renewcommand{\P}{{\ensuremath{\mathbb{P}}}}
\newcommand{\eps}{{\ensuremath{\varepsilon}}}
\newcommand{\funcF}{F}
\newcommand{\LipConst}{L}
\newcommand{\Var}{{\ensuremath{\operatorname{Var}}}}
\newcommand{\vast}{\bBigg@{4}}
\newcommand{\Vast}{\bBigg@{5}}
\title{Multi-level Picard approximations of
high-dimensional semilinear parabolic\\
differential
equations with gradient-dependent nonlinearities}
\author{Martin Hutzenthaler$^{1}$ \& Thomas Kruse$^{1}$
\bigskip
\\
\small{$^1$ Faculty of Mathematics, University of Duisburg-Essen, Germany}
%\\
%\small{45117 Essen, Germany}
}
\begin{document}

\maketitle
\makeatletter
\let\@makefnmark\relax
\let\@thefnmark\relax
\@footnotetext{\emph{AMS 2010 subject classification:} 65M75}
\@footnotetext{\emph{Key words and phrases:}
curse of dimensionality, high-dimensional PDEs, high-dimensional nonlinear BSDEs, multi-level 
Picard iteration, multi-level Monte Carlo method, gradient-dependent nonlinearities
  }
\makeatother
\abstract{
Parabolic partial differential equations (PDEs) and backward stochastic differential equations (BSDEs)
have a wide range of applications.
In particular, high-dimensional PDEs with gradient-dependent nonlinearities
%and general terminal condition
appear often in the state-of-the-art pricing and hedging of financial derivatives.
In this article we prove
% for the first time
that semilinear heat equations
with gradient-dependent nonlinearities
can be approximated under suitable assumptions 
with computational complexity
that grows polynomially both in the dimension and the reciprocal of the
accuracy.
}
%\tableofcontents

\section{Introduction}
Parabolic partial differential equations (PDEs) and backward stochastic differential equations (BSDEs) are key ingredients in a number of models in physics and financial engineering; see, e.g., the references in~\cite{EHutzenthalerJentzenKruse2017}.
%for applications in finance.
These applications often lead to stochastic optimization problems
which result in a semilinear or quasilinear PDE with a nonlinearity
depending on the gradient of the solution.
Moreover these PDEs are high-dimensional if the financial derivative depends
on a whole basket of underlyings.
So it is important to approximate the solutions of such PDEs
approximately at single space-time points
(the full solution function is presumeably hard to approximate in high dimensions;
cf.\ Theorem 1 in Heinrich~\cite{Heinrich2006} for the elliptic case).
The numerical analysis literature contains a multitude of approximation methods
for parabolic PDEs and BSDEs; see
the review
in~\cite{EHutzenthalerJentzenKruse2017}
and the recent article~\cite{EHanJentzen2017}.
However, to the best of our knowledge,
none of these methods except for the branching diffusion method
fulfills the requirement
that the computational complexity grows at most polynomially both in the
dimension and in the reciprocal of the accuracy; see Section 6
in~\cite{EHutzenthalerJentzenKruse2017}
for a detailed discussion.
The branching diffusion method
proposed in
\cite{Henry-Labordere2012,
Henry-LabordereTanTouzi2014,Henry-LabordereEtAl2016}
 meets this requirement.
However, not only is this method only applicable to a special class of PDEs,
it also requires the terminal/initial condition to be quite small
(see 
Subsection 6.7
in~\cite{EHutzenthalerJentzenKruse2017}
for a detailed discussion).

The recent article~\cite{EHutzenthalerJentzenKruse2016}
proposes
a family of approximation methods based on Picard approximations
and multi-level Monte Carlo methods; see also~\eqref{eq:def:U} below.
The simulation results
in~\cite{EHutzenthalerJentzenKruse2017}
suggest that these methods work satisfactory
for $100$-dimensional semilinear PDEs from applications.
In addition Corollary 3.18
in~\cite{EHutzenthalerJentzenKruse2016}
shows
under suitable regularity assumptions
on the exact solution for semilinear heat equations with gradient-independent
nonlinearities that the computational complexity is bounded by $O(d\eps^{-(4+\delta)})$
for any $\delta\in(0,\infty)$, where $d$ is the dimensionality
of the problem and $\eps\in(0,\infty)$ is the prescribed accuracy.
Generalizing the proof of Corollary 3.18
in~\cite{EHutzenthalerJentzenKruse2016}
to
the gradient-dependent case is nontrivial.
In particular, we were not able to derive an inequality
analogous to (56)
in~\cite{EHutzenthalerJentzenKruse2016}
involving a family of suitable
seminorms to which one could apply a discrete Gronwall inequality.

So it remained an open problem to prove mathematically
that semilinear PDEs with gradient-dependent nonlinearity
and general terminal/initial condition can be approximated
with a computational effort which grows at most polynomially
both in the dimension and in the reciprocal of the prescribed
accuracy.
In this article we solve this problem for the first time.
More precisely, Corollary~\ref{c:computational_effort_glob_error}
below shows
under suitable regularity assumptions
on the exact solution for semilinear heat equations with gradient-dependent
nonlinearities that the computational complexity of the multi-level Picard approximations~\eqref{eq:def:U}
is bounded by $O(d\eps^{-(4+\delta)})$
for any $\delta\in(0,\infty)$, where $d$ is the dimensionality
of the problem and $\eps\in(0,\infty)$ is the prescribed accuracy.

The structure of this article is as follows.
Subsection~\ref{ssec:notation} gathers notation that we frequently use.
In Section~\ref{sec:Picard} we introduce the setting which we 
consider throughout this article and, in particular,
the multilevel Picard approximations~\eqref{eq:def:U}
with Gau\ss-Legendre quadrature
rules given by~\eqref{eq:def_gauss_leg}.
The reason for choosing Gau\ss-Legendre quadrature rules
is the very fast convergence in case of sufficiently smooth
integrands; cf.\ Lemma~\ref{l:quad_error_glob} below.
Fast readers can then jump to
Corollary~\ref{c:computational_effort_glob_error},
which is the main result of this article.
For the proof of
Corollary~\ref{c:computational_effort_glob_error},
we first derive the (recursive) bound~\eqref{eq:global.estimate}
for the global error and then iterate this inequality
to obtain the (non-recursive) bound~\eqref{eq:aux_glob3}
for the global error.
Finally Lemma~\ref{l:ub_iterated_GL}
provides an upper bound for 
the iterated Gau\ss-Legendre integrals
over inverse square roots appearing in~\eqref{eq:aux_glob3}.

\subsection{Notation}\label{ssec:notation}
We denote by
$
  \langle \cdot, \cdot \rangle \colon
  \left(
    \cup_{ n \in \N }
    (
    \R^n \times \R^n
    )
  \right)
  \to
  [0,\infty)
$
the function that satisfies 
for all $ n \in \N $, $ v = ( v_1, \dots, v_n ) $, $ w = ( w_1, \dots, w_n ) \in \R^n $
that
$
  \langle
    v, w
  \rangle
    =
   \sum_{i=1}^n v_i w_i
$.
For every $p\in \N$ we denote by
$
  \left\| \cdot \right\|_p \colon
  \left(
    \cup_{ n \in \N }
    \R^n
  \right)
  \to 
  [0,\infty)
$
and 
$
  \left\| \cdot \right\|_\infty \colon
  \left(
    \cup_{ n \in \N }
    \R^n
  \right)
  \to 
  [0,\infty)
$
the functions that satisfy
for all $ n \in \N $, $ v = ( v_1, \dots, v_n ) $
that
$
  \left\| v \right\|_p
  =
  \big[ 
    \sum_{i=1}^n\left| v_i \right|^p
  \big]^{ 1 / p }
$
and 
$
  \left\| v \right\|_\infty
  =
  \max_{i=1,\ldots,n}|v_i|.
$
For every
topological space $(E,\mathcal E)$ we denote by $\mathcal{B}(E)$
the Borel-sigma-algebra on  $(E,\mathcal E)$.
For all measurable spaces $(A,\mathcal{A})$ and $(B,\mathcal{B})$
we denote by $\mathcal{M}(\mathcal A,\mathcal B)$ the set of $\mathcal{A}$/$\mathcal{B}$-measurable
functions from $A$ to $B$.
For every probability space $(\Omega,\mathcal{A},\P)$ we denote by
$\left\|\cdot\right\|_{L^2(\P;\R)}\colon\mathcal{M}(\mathcal{A},\mathcal{B}(\R))\to[0,\infty]$ the function that
 satisfies for all
$X\in\mathcal{M}(\mathcal{A},\mathcal{B}(\R))$ that
$\|X\|_{L^2(\P;\R)}=\sqrt{\E\!\left[|X|^2\right]}$.
We denote by $\tfrac{0}{0}$, $0\cdot\infty$, $0^0$, and $\sqrt{\infty}$
the extended real numbers given by
 $\tfrac{0}{0}=0$, $0\cdot\infty=0$, $0^0=1$, and $\sqrt{\infty}=\infty$. 
 For every $a\in(0,\infty)$ and every $b\in\R$
we denote by
$\tfrac{a}{0}$, $\tfrac{-a}{0}$, $0^{-a}$, $\tfrac{1}{0^a}$,
$\tfrac{b}{\infty}$, and $0^a$ the extended real numbers 
given by
$\tfrac{a}{0}=\infty$, $\tfrac{-a}{0}=-\infty$, $0^{-a}=\infty$, $\tfrac{1}{0^a}=\infty$, 
$\tfrac{b}{\infty}=0$, and $0^a=0$.
For every $A \subseteq \Z$, $a\colon A \to \R$,
    and $k\in \Z$ we denote by $\prod_{l=k}^{k-1}a(l)$ and $\sum_{l=k}^{k-1}a(l)$ the real numbers given by $\prod_{l=k}^{k-1}a(l)=1$ and $\sum_{l=k}^{k-1}a(l)=0$.

\section{Multi-level Picard approximations}\label{sec:setting.full.discretization}\label{sec:Picard}
Let 
$ T \in (0,\infty) $, 
$ d \in \N $, 
$ 
  %g \colon \R^d \to \R
  g \in C^2(\R^d,\R)
$,
$
  \Theta = \cup_{ n \in \N } \R^n
$,
$L\in\R^{d+1}$,
$K\in \R^d$,
%be a measurable function,
let
$
  ( 
    \Omega, \mathcal{F}, \P, ( \mathbb{F}_t )_{ t \in [0,T] } 
  )
$
be a stochastic basis,
let
$
  W^{ \theta } \colon [0,T] \times \Omega \to \R^d 
$,
$ \theta \in \Theta $,
be independent standard $(\mathbb{F}_t)_{t\in[0,T]}$-Brownian motions
with continuous sample paths,
let
$
  \funcF \colon 
  \mathcal{M}(\mathcal{B}([0,T]\times\R^d),\mathcal{B}(\R^{d+1}))
  \to
  \mathcal{M}(\mathcal{B}([0,T]\times\R^d),\mathcal{B}(\R))
$ satisfy for all $u_1,u_2\in
  \mathcal{M}(\mathcal{B}([0,T]\times\R^d),\mathcal{B}(\R^{d+1}))$,
  $r\in [0,T]$, $y\in \R^d$ that
  \begin{equation}  \begin{split}\label{eq:fLipschitz}
    \left| (\funcF(u_1)-\funcF(u_2))(r,y)\right|\leq 
    \sum_{\nu=1}^{d+1}
    \LipConst_\nu \left|\left(u_1(r, y)-u_2(r, y)\right)_{\nu}\right|,
  \end{split}     \end{equation}
  let $g\colon \R^d \to \R$ satisfy for all $x,y \in \R^d$ that
  \begin{equation}\label{eq:gLipschitz}
  |g(x)-g(y)|\leq \sum_{\alpha=1}^d K_\alpha |(x-y)_\alpha|,
  \end{equation}
let $u^{\infty}=(u^{\infty}(r,y))_{(r,y)\in[0,T]\times\R^d}\in C^{1,2}([0,T]\times\R^d,\R)$ satisfy
  %$\sup_{t\in[0,T]}\E\left[|g(W_{T-t}^0)|+\int_t^T|(\funcF(u^{\infty}))(s,W_{s-t}^0)|\,ds\right]<\infty$,
for all $r\in(0,T)$, $y\in \R^d$ that
$u^{\infty}(T,y)=g(y)$
and
\begin{equation}  \begin{split}\label{eq:PDE}
  %\left(
  (\tfrac{\partial}{\partial r}u^{\infty})(r,y)
+\tfrac{1}{2}(\Delta_y u^{\infty})(r,y)
 +(\funcF((u^{\infty},\nabla_y u^{\infty})))(r,y)=0,
% \right)
%  (t,x)
%  =0
\end{split}     \end{equation}
let ${\bf u^\infty} \in C([0,T]\times \R^d, \R^{d+1})$ satisfy for all 
$r\in[0,T]$, $y\in \R^d$ that
${\bf u^\infty}(r,y)=(u^\infty(r,y),\nabla_y u^\infty(r,y))$,
for every $n\in\N$ let
$(c_i^{n})_{i\in\{1,\ldots,n\}}\subseteq[-1,1]$
be the $n$ distinct roots of the Legendre polynomial $[-1,1]\ni x\mapsto \tfrac{1}{2^nn!}\tfrac{d^n}{dx^n}[(x^2-1)^n]\in\R$,
$q^{n,[a,b]}\colon[a,b]\to\R$ be the function which satisfies for all $t\in[a,b]$ that
\begin{equation}  \begin{split}\label{eq:def_gauss_leg}
  q^{n,[a,b]}(t)=\begin{cases}
 \int_a^b \left[\prod_{\substack{i\in\{1,\ldots,n\},\\ c_i^n\neq \frac{2t-(a+b)}{b-a}}}
  \tfrac{2x-(b-a)c_i^n-(a+b)}{2t-(b-a)c_i^n-(a+b)}\right]\,dx
  &\colon (a<b) \text{ and }\big(\frac{2t-(a+b)}{b-a}\in\{c_1^n,\ldots,c_n^n\}\big)\\
  0&\colon\text{else,}
  \end{cases}
\end{split}     \end{equation}
%for every $n\in\N$ let $(w_i^{n,[a,b]})_{i\in\N,a,b\in\R}\subseteq(0,\infty)$ satisfy
%for all $i\in\{1,\ldots,n\}$ and all $a,b\in\R$ with $a<b$ that
%\begin{equation}  \begin{split}
%  w_i^{n,[a,b]}=\int_a^b\prod_{k\in\{1,\ldots,n\}\setminus\{i\}}\tfrac{x-c_k^{n,[a,b]}}{c_i^{n,[a,b]}-c_k^{n,[a,b]}}\,dx,
%\end{split}     \end{equation}
let
$ 
  ({\bf U}_{ n,M,Q}^{\theta })_{n,M,Q\in\Z,\theta\in\Theta}
  \subseteq\mathcal{M}(\mathcal{B}([0,T]\times\R^d)\otimes\mathcal{F},\mathcal{B}(\R\times\R^d))
  %\colon [0,T] \times \R^d \times \Omega \to \R
$
%be 
%$ 
%  ( \mathcal{B}( [0,T]\times\R^d ) \otimes \mathcal{A} ) 
%$/$ 
%  \mathcal{B}( \R ) 
%$-measurable functions 
%which
satisfy
for all 
$
  n,M,Q \in \N
$,
$ \theta \in \Theta $,
$ (s,x) \in [0,T)\times \R^d $
that $
{\bf U}_{0,M,Q}^{\theta}(s,x)=0$ and
\begin{equation}  \begin{split}\label{eq:def:U}
  &{\bf U}_{n,M,Q}^{\theta}(s,x)
  =
  \big(
    g(x)
    , 0
  \big)
  +
  \frac{1}{M^n}\sum_{i=1}^{M^n}(g(x+W^{(\theta,0,-i)}_T-W^{(\theta,0,-i)}_s)-g(x))
  \Big(
  1 ,
  \tfrac{ 
  W^{(\theta, 0, -i)}_{T}- W^{(\theta, 0, -i)}_{s}
  }{ T - s }
  \Big)
  \\
  &+\sum_{l=0}^{n-1}\sum_{t\in(s,T)}\frac{q^{Q,[s,T]}(t)}{M^{n-l}}\sum_{i=1}^{M^{n-l}}
  \big(\funcF({\bf U}_{l,M,Q}^{(\theta,l,i,t)})-\1_{\N}(l)\funcF( {\bf U}_{l-1,M,Q}^{(\theta,-l,i,t)})\big)
  (t,x+W_{t}^{(\theta,l,i)}-W_s^{(\theta,l,i)})
  \Big(
  1 ,
  \tfrac{ 
  W^{(\theta, l, i)}_{t}- W^{(\theta, l, i)}_{s}
  }{ t - s }
  \Big).
\end{split}     \end{equation}

\section{Preliminary results for Gau\ss-Legendre quadrature rules}

\begin{lemma}[Iterated Gau\ss-Legendre integration]\label{l:iterated_GL}
Assume the setting in Section~\ref{sec:setting.full.discretization} and let $Q\in \N$. Then it holds for all $k\in \N$, $t_0\in [0,T)$ that
\begin{equation}\label{eq:iterated_GL}
\sum_{\substack{t_1,\ldots,t_{k-1},t_{k}\in\R,\\t_0<t_1<\ldots<t_{k-1}<t_{k}< T}}
   \left[\prod_{i=0}^{k-1} \frac {q^{Q,[t_{i},T]}(t_{i+1})}{\sqrt{t_{i+1}-t_{i}}}\right]=(T-t_0)^{\nicefrac k2}\prod_{i=0}^{k-1} \left[\sum_{s\in (0,1)}q^{Q,[0,1]}(s)\frac{(1-s)^{\nicefrac i2}}{\sqrt{s}}\right].
\end{equation}
\end{lemma}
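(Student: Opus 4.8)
The plan is to reduce the identity~\eqref{eq:iterated_GL} to a single \emph{affine-covariance} property of the Gau\ss--Legendre weight function $q^{n,[\cdot,\cdot]}$ and then to run an induction on $k$ that peels off the \emph{outermost} integration variable $t_1$. \textbf{Step 1 (affine covariance).} First I would prove that for all $n\in\N$, all $a,b\in\R$ with $a<b$ and all $t\in[a,b]$ one has $q^{n,[a,b]}(t)=(b-a)\,q^{n,[0,1]}\!\big(\tfrac{t-a}{b-a}\big)$. This follows directly from the explicit formula~\eqref{eq:def_gauss_leg}: substituting $x=a+(b-a)y$ in the integral and using $2x-(a+b)=(b-a)(2y-1)$ turns each factor $\tfrac{2x-(b-a)c_i^n-(a+b)}{2t-(b-a)c_i^n-(a+b)}$ into $\tfrac{2y-1-c_i^n}{2\tau-1-c_i^n}$ with $\tau=\tfrac{t-a}{b-a}$, while $dx=(b-a)\,dy$, and the case distinction $\tfrac{2t-(a+b)}{b-a}\in\{c_1^n,\dots,c_n^n\}$ becomes $2\tau-1\in\{c_1^n,\dots,c_n^n\}$, which is exactly the case distinction defining $q^{n,[0,1]}(\tau)$. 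Since the degree-$n$ Legendre polynomial has $n$ distinct roots in $(-1,1)$, the function $q^{n,[a,b]}$ vanishes outside a set of $n$ points of $(a,b)$, so all sums over $t$ in~\eqref{eq:iterated_GL} are genuinely finite; moreover, for every function $h$ on $(a,b)$ the same substitution yields $\sum_{t\in(a,b)}q^{n,[a,b]}(t)\,h(t)=(b-a)\sum_{s\in(0,1)}q^{n,[0,1]}(s)\,h\big(a+(b-a)s\big)$.

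\textbf{Step 2 (the basic one-variable sum).} Next I would deduce from Step~1, via the change of variables $s=\tfrac{t-a}{T-a}$ (so that $T-t=(1-s)(T-a)$ and $t-a=s(T-a)$), that for all $a\in[0,T)$ and all $\beta\in[0,\infty)$,
\[
\sum_{\substack{t\in\R,\,a<t<T}}(T-t)^{\nicefrac{\beta}{2}}\,\frac{q^{Q,[a,T]}(t)}{\sqrt{t-a}}
=(T-a)^{\nicefrac{(\beta+1)}{2}}\,\sum_{s\in(0,1)}q^{Q,[0,1]}(s)\,\frac{(1-s)^{\nicefrac{\beta}{2}}}{\sqrt{s}},
\]
where no degenerate cases of the conventions in Subsection~\ref{ssec:notation} occur because each node $s$ lies in $(0,1)$ and $a<T$.

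\textbf{Step 3 (induction on $k$).} The base case $k=1$ is exactly Step~2 with $\beta=0$ and $a=t_0$. For the inductive step ($k\geq2$) I would fix $t_0\in[0,T)$ and split the left-hand side of~\eqref{eq:iterated_GL} into a sum over $t_1\in(t_0,T)$ of the inner sum over $t_2,\dots,t_k$ with $t_1<t_2<\dots<t_k<T$; after the reindexing $\tau_j:=t_{j+1}$ this inner sum is precisely the left-hand side of~\eqref{eq:iterated_GL} with $t_0$ and $k$ replaced by $t_1$ and $k-1$, so the induction hypothesis rewrites it as $(T-t_1)^{\nicefrac{(k-1)}{2}}\prod_{i=0}^{k-2}\big[\sum_{s\in(0,1)}q^{Q,[0,1]}(s)\tfrac{(1-s)^{\nicefrac{i}{2}}}{\sqrt{s}}\big]$. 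Pulling that product out, the remaining outer sum is $\sum_{t_0<t_1<T}(T-t_1)^{\nicefrac{(k-1)}{2}}\,\tfrac{q^{Q,[t_0,T]}(t_1)}{\sqrt{t_1-t_0}}$, which Step~2 with $\beta=k-1$ and $a=t_0$ evaluates to $(T-t_0)^{\nicefrac{k}{2}}\sum_{s\in(0,1)}q^{Q,[0,1]}(s)\tfrac{(1-s)^{\nicefrac{(k-1)}{2}}}{\sqrt{s}}$; multiplying the two pieces reproduces the right-hand side of~\eqref{eq:iterated_GL}.

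I expect the only genuinely delicate points to be (i) verifying the scaling identity in Step~1 carefully from~\eqref{eq:def_gauss_leg}, in particular the bookkeeping of the index set $\{i\in\{1,\dots,n\}:c_i^n\neq\frac{2t-(a+b)}{b-a}\}$ of the product and of the ``else'' branch, and (ii) checking in Step~3 that the reindexing really produces an instance of~\eqref{eq:iterated_GL} to which the induction hypothesis applies verbatim; the rest is routine algebra with the exponents of $(T-t_i)$.
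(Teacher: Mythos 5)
Your proof is correct and follows essentially the same route as the paper's: establish the affine scaling identity $q^{Q,[a,T]}(a+s(T-a))=(T-a)\,q^{Q,[0,1]}(s)$ from the definition~\eqref{eq:def_gauss_leg}, then induct on $k$ by peeling off the outermost variable $t_1$, applying the induction hypothesis to the inner $(k-1)$-fold sum started at $t_1$, and evaluating the remaining single sum over $t_1$ via the scaling identity. The only difference is cosmetic: you factor out the weighted one-variable sum with a free exponent $\beta$ as a reusable ``Step~2'' lemma, whereas the paper carries out that computation inline in both the base case and the inductive step.
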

\begin{proof}[Proof of Lemma \ref{l:iterated_GL}]
First observe that for all $t_0\in [0,T)$ and $s\in [0,1]$ with $2s-1\in \{c_1^Q, c_2^Q, \ldots c^Q_Q\}$ the definition \eqref{eq:def_gauss_leg} and the integral transformation theorem with the substitution 
$[t_0,T]\ni x\mapsto \tfrac{x-t_0}{T-t_0}\in[0,1]$ prove that
\begin{equation}
\begin{split}
q^{Q,[t_0,T]}(s(T-t_0)+t_0)&=\int_{t_0}^T \left[\prod_{\substack{i\in\{1,\ldots,n\},\\ c_i^Q\neq \frac{2s(T-t_0)+2t_0-(t_0+T)}{T-t_0}}}
  \tfrac{2x-(T-t_0)c_i^Q-(t_0+T)}{2s(T-t_0)+2t_0-(T-t_0)c_i^Q-(t_0+T)}\right]\,dx\\
  &=\int_{t_0}^T  \left[\prod_{\substack{i\in\{1,\ldots,n\},\\ c_i^Q\neq 2s-1}}
  \tfrac{2(x-t_0)-(T-t_0)c_i^Q-(T-t_0)}{(T-t_0)(2s-c_i^Q-1)}\right]\,dx\\
  &=(T-t_0)\int_{0}^1\left[\prod_{\substack{i\in\{1,\ldots,n\},\\ c_i^Q\neq 2s-1}}
  \tfrac{2y-c_i^Q-1}{2s-c_i^Q-1}\right]\,dy\\
  &=(T-t_0)q^{Q,[0,1]}(s).
\end{split}
\end{equation}
This and \eqref{eq:def_gauss_leg} show that for all $t_0\in [0,T)$ and $s\in [0,1]$ it holds that
\begin{equation}\label{eq:scaling_gl_weights}
q^{Q,[t_0,T]}(s(T-t_0)+t_0)=(T-t_0)q^{Q,[0,1]}(s).
\end{equation}
We prove \eqref{eq:iterated_GL} by induction on $k\in \N$. For the base case $k=1$ observe that \eqref{eq:scaling_gl_weights} ensures that for all $t_0\in [0,T)$ it holds that
\begin{equation}
\sum_{t_1\in (t_0,T)} \frac {q^{Q,[t_{0},T]}(t_{1})}{\sqrt{t_{1}-t_{0}}}=\sum_{s\in (0,1)} \frac{q^{Q,[t_{0},T]}(s(T-t_0)+t_0)}{\sqrt{s(T-t_0)}}
=(T-t_0)^{\nicefrac 12}\sum_{s\in (0,1)}\frac{q^{Q,[0,1]}(s)}{\sqrt{s}}.
\end{equation} 
This establishes \eqref{eq:iterated_GL} in the base case $k=1$.
For the induction step $\N \ni k \rightarrow k+1 \in \N$ observe that the induction hypothesis implies that for all $t_0\in [0,T)$ it holds that
\begin{equation}
\begin{split}
&\sum_{\substack{t_1,\ldots,t_k,t_{k+1}\in\R,\\t_0<t_1<\ldots<t_k<t_{k+1}< T}}
   \left[\prod_{i=0}^{k} \frac {q^{Q,[t_{i},T]}(t_{i+1})}{\sqrt{t_{i+1}-t_{i}}}\right]
   =\sum_{t_1\in (t_0,T)}  \frac{q^{Q,[t_{0},T]}(t_1)}{\sqrt{t_1-t_0}}\left\{\sum_{\substack{t_2,\ldots,t_k,t_{k+1}\in\R,\\t_1<t_2<\ldots<t_k<t_{k+1}< T}}
   \left[\prod_{i=1}^{k} \frac {q^{Q,[t_{i},T]}(t_{i+1})}{\sqrt{t_{i+1}-t_{i}}}\right]\right\}\\
   &=\sum_{t_1\in (t_0,T)}  \frac{q^{Q,[t_{0},T]}(t_1)}{\sqrt{t_1-t_0}}\left\{(T-t_1)^{\nicefrac k2}\prod_{i=0}^{k-1} \left[\sum_{s\in (0,1)}q^{Q,[0,1]}(s)\frac{(1-s)^{\nicefrac i2}}{\sqrt{s}}\right]\right\}\\
   &=\left\{\prod_{i=0}^{k-1} \left[\sum_{s\in (0,1)}q^{Q,[0,1]}(s)\frac{(1-s)^{\nicefrac i2}}{\sqrt{s}}\right]\right\}
   \left\{ \sum_{t_1\in (t_0,T)}  q^{Q,[t_{0},T]}(t_1) \frac{(T-t_1)^{\nicefrac k2}}{\sqrt{t_1-t_0}}\right\}.
   \end{split}
\end{equation}
This together with \eqref{eq:scaling_gl_weights} ensures that for all $t_0\in [0,T)$ it holds that
\begin{equation}
\begin{split}
&\sum_{\substack{t_1,\ldots,t_k,t_{k+1}\in\R,\\t_0<t_1<\ldots<t_k<t_{k+1}< T}}
   \left[\prod_{i=0}^{k} \frac {q^{Q,[t_{i},T]}(t_{i+1})}{\sqrt{t_{i+1}-t_{i}}}\right]\\
   &=\left\{\prod_{i=0}^{k-1} \left[\sum_{s\in (0,1)}q^{Q,[0,1]}(s)\frac{(1-s)^{\nicefrac i2}}{\sqrt{s}}\right]\right\}
   \left\{ \sum_{s\in (0,1)}  q^{Q,[t_{0},T]}(s(T-t_0)+t_0)\, \frac{(T-s(T-t_0)-t_0)^{\nicefrac k2}}{\sqrt{s(T-t_0)}}\right\}\\
   &=(T-t_0)^{\nicefrac{(k+1)}{2}}\prod_{i=0}^{k} \left[\sum_{s\in (0,1)}q^{Q,[0,1]}(s)\frac{(1-s)^{\nicefrac i2}}{\sqrt{s}}\right]
   .
   \end{split}
\end{equation}
 This finishes the induction step $\N_0 \ni k \rightarrow k+1 \in \N$. Induction hence establishes \eqref{eq:iterated_GL}. The proof of Lemma~\ref{l:iterated_GL} is thus completed.
\end{proof}

\begin{lemma}\label{l:ub_frac_int}
Assume the setting in Section~\ref{sec:setting.full.discretization} and let $Q\in \N$, $j\in \N_0$. Then it holds that
\begin{equation}\label{eq:ub_frac_int}
\sum_{s\in (0,1)}q^{Q,[0,1]}(s)\frac{(1-s)^{j}}{\sqrt{s}}\le \frac{\Gamma(\frac 12) \Gamma(j+1)}{\Gamma(j+\frac 32)}.
\end{equation}
\end{lemma}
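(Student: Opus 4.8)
The plan is to recognise the left-hand side of \eqref{eq:ub_frac_int} as the $Q$-point Gau\ss--Legendre quadrature of the map $s\mapsto\tfrac{(1-s)^j}{\sqrt s}$ on $[0,1]$ and to compare it with the exact integral $\int_0^1 s^{-1/2}(1-s)^j\,ds=\tfrac{\Gamma(\frac12)\Gamma(j+1)}{\Gamma(j+\frac32)}$. By \eqref{eq:def_gauss_leg} with $a=0$, $b=1$, the weight $q^{Q,[0,1]}(s)$ vanishes unless $2s-1$ is a root of the $Q$-th Legendre polynomial, and for such $s$ it equals the integral over $[0,1]$ of the associated Lagrange basis polynomial; hence $\sum_{s\in(0,1)}q^{Q,[0,1]}(s)\,\phi(s)=\sum_{k=1}^Q w_k\,\phi(x_k)$ with $x_k:=\tfrac{c_k^Q+1}{2}\in(0,1)$ and $w_k:=q^{Q,[0,1]}(x_k)$, and this $Q$-node Gau\ss\ rule integrates every polynomial of degree $\le 2Q-1$ exactly. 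I would then invoke the standard consequence that a $C^{2Q}([0,1],\R)$-function $\phi$ with $\phi^{(2Q)}\ge 0$ on $[0,1]$ is underestimated by the rule:
\[
  \textstyle\sum_{s\in(0,1)}q^{Q,[0,1]}(s)\,\phi(s)\ \le\ \int_0^1\phi(x)\,dx .
\]
This is seen by comparing $\phi$ with its degree-$\le(2Q-1)$ Hermite interpolant $H$ at $x_1,\dots,x_Q$: exactness together with $H(x_k)=\phi(x_k)$ gives $\sum_{k=1}^Q w_k\phi(x_k)=\int_0^1 H(x)\,dx$, whereas the Hermite error identity $\phi(x)-H(x)=\tfrac{\phi^{(2Q)}(\xi_x)}{(2Q)!}\prod_{k=1}^Q(x-x_k)^2\ge 0$ forces $\phi\ge H$ on $[0,1]$.

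The obstruction is that $h(s):=s^{-1/2}(1-s)^j$ is not $C^{2Q}$ up to $s=0$, so the estimate above cannot be applied to $h$ itself. I would bypass this using the representation $s^{-1/2}=\tfrac{1}{\sqrt\pi}\int_0^\infty t^{-1/2}e^{-ts}\,dt$ for $s>0$ (a special case of the Laplace representation of completely monotone functions), which exhibits $h$ as a nonnegative mixture of the functions $g_t(s):=e^{-ts}(1-s)^j$, $t\in(0,\infty)$, each of which \emph{is} $C^\infty$ on $[0,1]$. The only computation that matters is the Leibniz expansion of $g_t^{(2Q)}$: since $(e^{-ts})^{(2Q-m)}=(-t)^{2Q-m}e^{-ts}$, since $((1-s)^j)^{(m)}=(-1)^m\tfrac{j!}{(j-m)!}(1-s)^{j-m}$ for $m\le j$ and $((1-s)^j)^{(m)}=0$ for $m>j$, and since the combined sign satisfies $(-t)^{2Q-m}(-1)^m=(-1)^{2Q}t^{2Q-m}=t^{2Q-m}$ because $2Q$ is even, one obtains
\[
  g_t^{(2Q)}(s)=\sum_{m=0}^{\min(2Q,j)}\binom{2Q}{m}\,t^{2Q-m}\,\tfrac{j!}{(j-m)!}\,(1-s)^{j-m}e^{-ts}\ \ge\ 0
\]
for all $t\ge 0$ and $s\in[0,1]$. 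Hence the quadrature inequality above applies to every $g_t$.

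Finally I would assemble the estimate by multiplying $\sum_{s\in(0,1)}q^{Q,[0,1]}(s)\,g_t(s)\le\int_0^1 g_t(s)\,ds$ by $\tfrac{1}{\sqrt\pi}t^{-1/2}\ge 0$ and integrating over $t\in(0,\infty)$. On the left the sum has only the $Q$ nonzero terms indexed by $x_1,\dots,x_Q$, so the order of summation and integration may be swapped freely and, by the Laplace identity applied at each $x_k>0$, the result is exactly $\sum_{s\in(0,1)}q^{Q,[0,1]}(s)\tfrac{(1-s)^j}{\sqrt s}$; on the right, Tonelli's theorem (the integrand is nonnegative on $(0,\infty)\times(0,1)$) gives $\tfrac{1}{\sqrt\pi}\int_0^1(1-s)^j\big(\int_0^\infty t^{-1/2}e^{-ts}\,dt\big)ds=\int_0^1 s^{-1/2}(1-s)^j\,ds=\tfrac{\Gamma(\frac12)\Gamma(j+1)}{\Gamma(j+\frac32)}$, which is the claimed bound. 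The main obstacle is precisely the endpoint singularity of $h$ at $0$, which the Laplace representation removes; the remaining ingredients --- the Hermite-interpolation argument for Gau\ss\ quadrature, the elementary sign bookkeeping giving $g_t^{(2Q)}\ge 0$, and the Fubini/Tonelli interchanges --- are routine.
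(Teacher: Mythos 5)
Your proof is correct, and it takes a genuinely different route from the paper's to deal with the endpoint singularity of $s\mapsto s^{-1/2}(1-s)^j$. The paper's proof regularises the singularity directly: it replaces $\sqrt{s}$ by $\sqrt{s+\eps}$, applies the Leibniz formula to show that the $(2Q)$-th derivative of $s\mapsto(1-s)^j/\sqrt{s+\eps}$ is nonnegative on $(0,1)$, invokes the same Gau\ss--Legendre error representation (citing Davis--Rabinowitz, display (2.7.12)) to conclude the quadrature underestimates the integral, bounds $\int_0^1(1-s)^j/\sqrt{s+\eps}\,ds\le\int_0^1(1-s)^j/\sqrt{s}\,ds$, and finally lets $\eps\to 0$. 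You instead linearise the singularity away via the Laplace representation $s^{-1/2}=\pi^{-1/2}\int_0^\infty t^{-1/2}e^{-ts}\,dt$, reduce to the smooth functions $g_t(s)=e^{-ts}(1-s)^j$ (whose $(2Q)$-th derivative you show is nonnegative by the same Leibniz-plus-sign bookkeeping), apply the quadrature underestimate to each $g_t$, and integrate in $t$ via Tonelli. The core analytic ingredient --- the $Q$-point Gau\ss\ rule underestimates any $C^{2Q}$ function with nonnegative $(2Q)$-th derivative, which you derive from Hermite interpolation rather than cite --- is the same in both proofs; the difference is purely in how the non-smoothness at $s=0$ is circumvented. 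The paper's $\eps$-shift is shorter and more elementary; your Laplace-mixture argument is slightly heavier (it needs the Gamma integral representation and Tonelli) but has the structural appeal of expressing the singular integrand as a positive superposition of uniformly smooth ones, so that no limiting argument in the quadrature estimate itself is required.
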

\begin{proof}[Proof of Lemma \ref{l:ub_frac_int}]
The Leibniz formula ensures that for all $\eps \in (0,\infty)$, $s\in (0,1)$ it holds that
\begin{equation}\label{eq:leibniz}
\begin{split}
\frac{d^{2Q}}{ds^{2Q}}\frac{(1-s)^{j}}{\sqrt{s+\eps}}&=\sum_{k=0}^{2Q}\binom{2Q}{k} \left[\frac{d^{2Q-k}}{ds^{2Q-k}}\frac{1}{\sqrt{s+\eps}}\right]\left[\frac{d^{k}}{ds^{k}}(1-s)^j\right]\\
&=\sum_{k=0}^{2Q}\binom{2Q}{k} \left[(s+\eps)^{-(2Q-k+\nicefrac 12)}\prod_{l=0}^{2Q-k-1}(-\tfrac 12 -l)\right]\left[(-1)^k(1-s)^{j-k}\prod_{l=0}^{k-1}(j-l)\right]\\
&=\sum_{k=0}^{\min\{j,2Q\}}\binom{2Q}{k} \left[(s+\eps)^{-(2Q-k+\nicefrac 12)}\prod_{l=0}^{2Q-k-1}(\tfrac 12 +l)\right]\left[(1-s)^{j-k}\prod_{l=0}^{k-1}(j-l)\right]\\
&\ge 0.
\end{split}
\end{equation}
The error representation for the Gau\ss-Legendre quadrature rule
 (see, e.g., \cite[Display (2.7.12)]{davis2007methods}) implies that for every $\eps \in (0,\infty)$ there exists $\xi \in (0,1)$ such that it holds that
 \begin{equation}
 \begin{split}
 \sum_{s\in (0,1)}q^{Q,[0,1]}(s)\frac{(1-s)^{j}}{\sqrt{s+\eps}}&=\int_0^1\frac{(1-s)^{j}}{\sqrt{s+\eps}}\,ds-\frac{(Q!)^4}{(2Q+1)[(2Q)!]^3}\frac{d^{2Q}}{ds^{2Q}}\Bigg|_{s=\xi}\frac{(1-s)^{j}}{\sqrt{s+\eps}}.
 \end{split}
 \end{equation}
 This and \eqref{eq:leibniz} prove that for all $\eps \in (0,\infty)$ it holds that
 \begin{equation}\label{eq:ub_frac_int_fin}
 \begin{split}
 \sum_{s\in (0,1)}q^{Q,[0,1]}(s)\frac{(1-s)^{j}}{\sqrt{s+\eps}}\le \int_0^1\frac{(1-s)^{j}}{\sqrt{s+\eps}}\,ds
 \le \int_0^1\frac{(1-s)^{j}}{\sqrt{s}}\,ds = \frac{\Gamma(\frac 12) \Gamma(j+1)}{\Gamma(j+\frac 32)}.
 \end{split}
 \end{equation}
 Letting $\eps \to 0$ in \eqref{eq:ub_frac_int_fin}
  completes the proof of Lemma \ref{l:ub_frac_int}.
\end{proof}

\begin{lemma}[Upper bound for iterated Gau\ss-Legendre integration]\label{l:ub_iterated_GL}
Assume the setting in Section~\ref{sec:setting.full.discretization} and let $Q\in \N$. Then it holds for all $k\in \N$, $t_0\in [0,T)$ that
\begin{equation}\label{eq:ub_iterated_GL}
\sum_{\substack{t_1,\ldots,t_{k-1},t_{k}\in\R,\\t_0<t_1<\ldots<t_{k-1}<t_{k}< T}}
   \left[\prod_{i=0}^{k-1} \frac {q^{Q,[t_{i},T]}(t_{i+1})}{\sqrt{t_{i+1}-t_{i}}}\right]\le \frac{2((T-t_0)\pi)^{\nicefrac k2}}{\Gamma(\frac{k}{2})}.
\end{equation}
\end{lemma}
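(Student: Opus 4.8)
The plan is to combine the exact evaluation of the iterated Gau\ss-Legendre sum from Lemma~\ref{l:iterated_GL} with the termwise upper bound from Lemma~\ref{l:ub_frac_int} and then to recognize the resulting product of Gamma-factors as a telescoping expression. First I would apply Lemma~\ref{l:iterated_GL} to rewrite the left-hand side of~\eqref{eq:ub_iterated_GL} as $(T-t_0)^{k/2}\prod_{i=0}^{k-1}\big[\sum_{s\in(0,1)}q^{Q,[0,1]}(s)(1-s)^{i/2}/\sqrt{s}\big]$. The only obstacle to applying Lemma~\ref{l:ub_frac_int} directly is that the exponent on $(1-s)$ is $i/2$, which is a half-integer, whereas Lemma~\ref{l:ub_frac_int} is stated for integer exponents $j\in\N_0$. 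So I would first note that the proof of Lemma~\ref{l:ub_frac_int} goes through verbatim with $j$ replaced by any real $\beta\ge 0$: the Leibniz computation in~\eqref{eq:leibniz} only uses that $(1-s)^\beta$ has nonnegative-signed derivatives up to order $2Q$ on $(0,1)$ (more precisely, that $\frac{d^k}{ds^k}(1-s)^\beta = (-1)^k (1-s)^{\beta-k}\prod_{l=0}^{k-1}(\beta-l)$ has sign $(-1)^k$ when $k\le\lceil\beta\rceil$ and is otherwise of sign $(-1)^k$ still after pairing with the $(s+\eps)^{-(2Q-k+1/2)}$ derivative — in fact for real $\beta$ all terms in the Leibniz sum have the same sign, so the $2Q$-th derivative of $(1-s)^\beta(s+\eps)^{-1/2}$ is nonnegative), and the Gau\ss-Legendre error representation and the final integral bound $\int_0^1 (1-s)^\beta s^{-1/2}\,ds = \Gamma(1/2)\Gamma(\beta+1)/\Gamma(\beta+3/2)$ (a Beta integral) hold for all real $\beta\ge0$. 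Thus $\sum_{s\in(0,1)}q^{Q,[0,1]}(s)(1-s)^{i/2}/\sqrt{s}\le \Gamma(1/2)\,\Gamma(\tfrac i2+1)/\Gamma(\tfrac i2+\tfrac 32)$.

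Next I would substitute this bound into the product and telescope. We get that the left-hand side of~\eqref{eq:ub_iterated_GL} is at most
\begin{equation*}
(T-t_0)^{k/2}\prod_{i=0}^{k-1}\frac{\Gamma(\tfrac12)\,\Gamma(\tfrac i2+1)}{\Gamma(\tfrac i2+\tfrac32)}
= (T-t_0)^{k/2}\,\big(\Gamma(\tfrac12)\big)^{k}\prod_{i=0}^{k-1}\frac{\Gamma(\tfrac{i}{2}+1)}{\Gamma(\tfrac{i+3}{2})}.
\end{equation*}
Since $\tfrac i2+\tfrac32 = \tfrac{i+1}{2}+1$, the factor $\Gamma(\tfrac i2 + \tfrac 32)$ in the denominator of the $i$-th term equals $\Gamma(\tfrac{i+1}{2}+1)$, which is the numerator of the $(i+1)$-st term; hence the product telescopes to $\Gamma(\tfrac02+1)/\Gamma(\tfrac{k-1}{2}+\tfrac32) = \Gamma(1)/\Gamma(\tfrac{k}{2}+1) = 1/\Gamma(\tfrac k2+1)$. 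Using $\Gamma(\tfrac12)=\sqrt\pi$ this yields the bound $(T-t_0)^{k/2}\pi^{k/2}/\Gamma(\tfrac k2+1) = ((T-t_0)\pi)^{k/2}/\Gamma(\tfrac k2+1)$.

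Finally I would convert $1/\Gamma(\tfrac k2+1)$ into the claimed $2/\Gamma(\tfrac k2)$ using $\Gamma(\tfrac k2+1)=\tfrac k2\,\Gamma(\tfrac k2)$, so that $1/\Gamma(\tfrac k2+1) = \tfrac 2k \cdot \tfrac{1}{\Gamma(k/2)} \le 2/\Gamma(\tfrac k2)$ for all $k\in\N$ (indeed $\tfrac 2k\le 2$). This gives exactly~\eqref{eq:ub_iterated_GL}. The main obstacle is the bookkeeping point that Lemma~\ref{l:ub_frac_int} must be invoked at half-integer exponents; once one observes that its proof is insensitive to integrality of $j$ (only the sign pattern of the derivatives of $s\mapsto(1-s)^\beta(s+\eps)^{-1/2}$ matters, and that is preserved), the rest is a short telescoping computation. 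I would therefore either state Lemma~\ref{l:ub_frac_int} for general real $\beta\ge0$ from the outset, or insert a one-line remark that its proof applies unchanged when $j$ is replaced by $i/2$.
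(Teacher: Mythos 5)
Your overall plan — apply Lemma~\ref{l:iterated_GL}, bound each factor by a Gamma-quotient, then telescope — is natural, and the telescoping step at the end is clean and correct as far as it goes. The problem is upstream: the claimed extension of Lemma~\ref{l:ub_frac_int} to non-integer exponents $\beta$ is not justified by the argument you give, and your central sign claim is in fact false.

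Concretely, take $\beta=\tfrac12$ and any $Q\ge 1$. The $k$-th Leibniz term in~\eqref{eq:leibniz} is
\[
\binom{2Q}{k}\Bigl[(s+\eps)^{-(2Q-k+\nicefrac 12)}\smallprod_{l=0}^{2Q-k-1}(-\tfrac12-l)\Bigr]\Bigl[(-1)^k(1-s)^{\beta-k}\smallprod_{l=0}^{k-1}(\beta-l)\Bigr].
\]
The first bracket has sign $(-1)^{2Q-k}$ and the prefactor $(-1)^k$ contributes another alternation, so everything hinges on the sign of $\prod_{l=0}^{k-1}(\beta-l)$. For integer $\beta=j$ this product is nonnegative and vanishes for $k>j$ — that is exactly why the paper's Lemma~\ref{l:ub_frac_int} restricts to $j\in\N_0$, and why all surviving Leibniz terms there are nonnegative. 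For non-integer $\beta$ with $m=\lfloor\beta\rfloor$, however, the product is strictly positive for $k\le m+1$ and then picks up negative factors: its sign is $(-1)^{k-m-1}$ for $k\ge m+1$. With $\beta=\tfrac12$ (so $m=0$) the overall sign of the $k$-th term is $+1$ for $k\in\{0,1\}$ and $(-1)^{k-1}$ for $k\ge 2$, i.e.\ the term with $k=2$ is \emph{negative}. (One can also see directly that $\tfrac{d^2}{ds^2}\bigl[(1-s)^{1/2}(s+\eps)^{-1/2}\bigr]$ changes sign on $(0,1)$, going to $-\infty$ as $s\to 1^-$.) So the assertion ``all terms in the Leibniz sum have the same sign'' fails, and with it the conclusion that the $2Q$-th derivative is nonnegative, which is the sole input to the one-sided error bound. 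Without a definite sign for $g^{(2Q)}(\xi)$ you cannot conclude that Gau\ss--Legendre underestimates the integral, so the inequality $\sum_{s}q^{Q,[0,1]}(s)(1-s)^{\beta}/\sqrt{s}\le\int_0^1(1-s)^{\beta}/\sqrt{s}\,ds$ is not established for half-integer $\beta$ by this route.

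The paper avoids exactly this difficulty: since $q^{Q,[0,1]}(s)\ge 0$ and $(1-s)^{i/2}\le(1-s)^{\lfloor i/2\rfloor}$ for $s\in(0,1)$, one first replaces the exponent by its floor and then applies Lemma~\ref{l:ub_frac_int} at the \emph{integer} $j=\lfloor i/2\rfloor$, where the sign structure genuinely holds. The price is that the resulting product $\prod_{i=0}^{k-1}\Gamma(\lfloor i/2\rfloor+1)/\Gamma(\lfloor i/2\rfloor+\tfrac 32)$ no longer telescopes, which is why the paper introduces $w(k)$ and proves $w(k)\le 2/\Gamma(k/2)$ by a small induction using log-convexity of $\Gamma$. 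Your telescoping would work if the half-integer bound were available, and is arguably slicker — but as written the proof has a real gap at the point where you invoke Lemma~\ref{l:ub_frac_int} for $j=i/2\notin\N_0$. The fix is either to adopt the floor step and the $w(k)$ induction as in the paper, or to supply a genuinely different argument (not based on the Leibniz sign count) that the Gau\ss--Legendre rule underestimates $\int_0^1(1-s)^{\beta}s^{-1/2}\,ds$ for all real $\beta\ge 0$.
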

\begin{proof}[Proof of Lemma \ref{l:ub_iterated_GL}]
Throughout this proof let $w\colon \N \to \R$ be the function that satisfies for all $k\in \N$ that $w(k)=\prod_{i=0}^{k-1} \frac{\Gamma(\lfloor \frac i2 \rfloor +1)}{\Gamma(\lfloor \frac i2 \rfloor +\frac 32)}$.
First observe that for all $k\in \{2n\colon n\in \N\}$ it holds that
\begin{equation}\label{eq:aux_inequal1}
\frac{\Gamma(\frac{k+1}{2})\Gamma(\lfloor \frac k2 \rfloor +1)}{\Gamma(\frac{k}{2})\Gamma(\lfloor \frac k2 \rfloor +\frac 32)}=\frac{\Gamma(\frac{k+1}{2})\Gamma(\frac k2  +1)}{\Gamma(\frac{k}{2})\Gamma(\frac k2 +\frac 32)}
=\frac{\Gamma(\frac{k}{2}+\frac 12 )\Gamma(\frac k2)\frac{k}{2}}{\Gamma(\frac{k}{2})\Gamma(\frac k2 + \frac 12)(\frac k2 + \frac 12)}
=\frac{\frac k2 }{\frac{k}{2}+\frac 12}\le 1.
\end{equation}
Moreover, the fact that $\Gamma\colon (0,\infty)\to (0,\infty)$ is logarithmically convex ensures that for all $k\in \{2n-1\colon n\in \N\}$ it holds that
\begin{equation}
\frac{\Gamma(\frac{k+1}{2})\Gamma(\lfloor \frac k2 \rfloor +1)}{\Gamma(\frac{k}{2})\Gamma(\lfloor \frac k2 \rfloor +\frac 32)}=\frac{\Gamma(\frac{k}{2}+\frac 12)^2}{\Gamma(\frac{k}{2})\Gamma(\frac {k}2 +1)}
\le 1.
\end{equation}
This and \eqref{eq:aux_inequal1} prove that for all $k\in \N$ it holds that
\begin{equation}\label{eq:aux_inequal2}
\frac{\Gamma(\frac{k+1}{2})\Gamma(\lfloor \frac k2 \rfloor +1)}{\Gamma(\frac{k}{2})\Gamma(\lfloor \frac k2 \rfloor +\frac 32)}
\le 1.
\end{equation}
Next we show that for all $k\in \N$ it holds that
\begin{equation}\label{eq:ub_prod}
w(k)\le \frac{2}{\Gamma(\frac{k}{2})}.
\end{equation}
We prove \eqref{eq:ub_prod} by induction on $k\in \N$. For the base case $k=1$ we note that it holds that
\begin{equation}
w(1)=\frac{\Gamma(1)}{\Gamma(\frac 32)}=\frac{2}{\Gamma(\frac 12)}.
\end{equation}
This establishes \eqref{eq:ub_prod} in the base case $k=1$. 
For the induction step $\N \ni k \to k+1\in \N$ observe that the induction hypothesis and \eqref{eq:aux_inequal2} show that
\begin{equation}
w(k+1)=w(k)\frac{\Gamma(\lfloor \frac k2 \rfloor +1)}{\Gamma(\lfloor \frac k2 \rfloor +\frac 32)}\le \frac{2\Gamma(\lfloor \frac k2 \rfloor +1)}{\Gamma(\frac{k}{2})\Gamma(\lfloor \frac k2 \rfloor +\frac 32)}=\frac{\Gamma(\frac{k+1}{2})\Gamma(\lfloor \frac k2 \rfloor +1)}{\Gamma(\frac{k}{2})\Gamma(\lfloor \frac k2 \rfloor +\frac 32)}\frac{2}{\Gamma(\frac{k+1}{2})}\le \frac{2}{\Gamma(\frac{k+1}{2})}.
\end{equation}
This finishes the induction step $\N \ni k \rightarrow k+1 \in \N$. Induction hence establishes \eqref{eq:ub_prod}. Lemma \ref{l:iterated_GL}, Lemma \ref{l:ub_frac_int}, the facts that $\forall s\in (0,1)\colon q^{Q,[0,1]}(s)\ge 0$ (see, e.g., \cite[Section 2.7]{davis2007methods}) and $\Gamma(\tfrac 12)=\sqrt{\pi}$, and \eqref{eq:ub_prod} show that for all $k\in \N$, $t_0\in [0,T)$ it holds that
\begin{equation}
\begin{split}
\sum_{\substack{t_1,\ldots,t_{k-1},t_{k}\in\R,\\t_0<t_1<\ldots<t_{k-1}<t_{k}< T}}
   \left[\prod_{i=0}^{k-1} \frac {q^{Q,[t_{i},T]}(t_{i+1})}{\sqrt{t_{i+1}-t_{i}}}\right] &=(T-t_0)^{\nicefrac k2}\prod_{i=0}^{k-1} \left[\sum_{s\in (0,1)}q^{Q,[0,1]}(s)\frac{(1-s)^{\nicefrac i2}}{\sqrt{s}}\right]\\
   & \le (T-t_0)^{\nicefrac k2}\prod_{i=0}^{k-1} \left[\sum_{s\in (0,1)}q^{Q,[0,1]}(s)\frac{(1-s)^{\lfloor \nicefrac i2 \rfloor}}{\sqrt{s}}\right]\\
   & \le (T-t_0)^{\nicefrac k2}\prod_{i=0}^{k-1} \left[\frac{\Gamma(\frac 12) \Gamma(\lfloor \frac{i}{2} \rfloor +1)}{\Gamma(\lfloor \frac{i}{2} \rfloor+\frac 32)}\right]\\
   & = (T-t_0)^{\nicefrac k2}\Gamma(\tfrac 12)^k w(k)\\
   &\le \frac{2((T-t_0)\pi)^{\nicefrac k2}}{\Gamma(\frac{k}{2})}.
\end{split}
\end{equation}   
   This completes the proof of Lemma \ref{l:ub_iterated_GL}.
\end{proof}

\begin{lemma}[Iterated sums]\label{l:iterated_sum}
Let $n\in \N$, $l_0\in \{0,\ldots,n-1\}$, and $j\in \{1,\ldots,n-l_0-1\}$. Then it holds that
\begin{equation}\label{eq:ub_iterated_sum}
\sum_{\substack{l_1,\ldots,l_{j}\in\N,\\l_0<l_1<\ldots<l_{j}< n}} 1 =\binom{n-l_0-1}{j}.
\end{equation}
\end{lemma}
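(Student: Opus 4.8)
The plan is to read the left-hand side of \eqref{eq:ub_iterated_sum} as a pure counting quantity: it is the number of tuples $(l_1,\ldots,l_j)\in\N^j$ satisfying $l_0<l_1<\ldots<l_j<n$. First I would introduce the finite set $A=\{l_0+1,l_0+2,\ldots,n-1\}\subseteq\N$ and observe that, since $l_0\le n-1$ and $1\le j\le n-l_0-1$, this set is nonempty and has exactly $n-l_0-1$ elements. Next I would note that a tuple $(l_1,\ldots,l_j)\in\N^j$ satisfies $l_0<l_1<\ldots<l_j<n$ if and only if $\{l_1,\ldots,l_j\}$ is a $j$-element subset of $A$, and that the assignment $(l_1,\ldots,l_j)\mapsto\{l_1,\ldots,l_j\}$ is a bijection from the set of such tuples onto the set of $j$-element subsets of $A$ (its inverse sends a subset to the unique tuple listing its elements in strictly increasing order). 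Hence the left-hand side of \eqref{eq:ub_iterated_sum} equals the number of $j$-element subsets of $A$, which is $\binom{|A|}{j}=\binom{n-l_0-1}{j}$ by definition of the binomial coefficient.

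As an alternative route I could argue by induction on $j\in\{1,\ldots,n-l_0-1\}$: the base case $j=1$ is the identity $\sum_{l_1=l_0+1}^{n-1}1=n-l_0-1=\binom{n-l_0-1}{1}$, and in the induction step one peels off the innermost summation variable $l_j$, applies the induction hypothesis (with $n$ replaced by $l_j$) to the remaining $j-1$ nested sums, and finishes with the hockey-stick identity $\sum_{m}\binom{m-l_0-1}{j-1}=\binom{n-l_0-1}{j}$. I would slightly prefer the bijective argument since it avoids the bookkeeping of shifted index ranges.

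I do not expect a genuine obstacle here; the only point that needs a moment of care is to check that the hypotheses $l_0\le n-1$ and $1\le j\le n-l_0-1$ really do guarantee that $A$ has the claimed cardinality and that the index set of the sum is nonempty, so that it is the stated identity — rather than a degenerate instance of the empty-sum/empty-product conventions fixed in Subsection~\ref{ssec:notation} — that is being established.
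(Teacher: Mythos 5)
Your bijective argument is exactly the paper's proof (the paper states it in one sentence: the sum counts $j$-element subsets of a set of $n-l_0-1$ elements), with the bijection and cardinality checks spelled out a bit more fully. Correct and same approach; the alternative induction route you sketch is fine but not needed.
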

\begin{proof}[Proof of Lemma \ref{l:iterated_sum}]
The natural number $\sum_{\substack{l_1,\ldots,l_{j}\in\N,\\l_0<l_1<\ldots<l_{j}< n}} 1$ is the number of ways to choose a subset of size $j$ elements from a set of $n-l_0-1$ elements. This completes the proof of Lemma \ref{l:iterated_sum}.
\end{proof}

\begin{lemma}[Log-subadditivity]\label{l:ub_poly}
Let $d,p\in \N$, $x,y \in \R^d$, and let $\|\cdot \|\colon \R^d\to [0,\infty)$ be a norm. Then $1+\|x+y\|^p \le (1+\|y\|)^p(1+\|x\|^p)$.
\end{lemma}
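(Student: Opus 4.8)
The inequality is elementary, and I would establish it by a direct application of convexity/monotonicity arguments rather than anything clever. First I would fix the norm $\|\cdot\|$ and the vectors $x,y\in\R^d$, and invoke the triangle inequality to get $\|x+y\|\le \|x\|+\|y\|$. Since $t\mapsto t^p$ is nondecreasing on $[0,\infty)$, this yields $\|x+y\|^p\le(\|x\|+\|y\|)^p$, so it suffices to prove the cleaner numerical inequality $1+(a+b)^p\le(1+b)^p(1+a^p)$ for all real $a,b\ge 0$ and all $p\in\N$, and then substitute $a=\|x\|$, $b=\|y\|$.

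For the numerical inequality I would expand the right-hand side and compare term by term. Writing $(1+b)^p(1+a^p)=(1+b)^p+(1+b)^p a^p\ge(1+b)^p+a^p$, it remains to check $1+(a+b)^p\le(1+b)^p+a^p$, i.e.\ $(a+b)^p-a^p\le(1+b)^p-1$. One way is the binomial theorem: $(a+b)^p=\sum_{k=0}^p\binom{p}{k}a^{p-k}b^k$, so $(a+b)^p-a^p=\sum_{k=1}^p\binom{p}{k}a^{p-k}b^k$, while $(1+b)^p-1=\sum_{k=1}^p\binom{p}{k}b^k$; hence it is enough that $a^{p-k}b^k\le b^k$ for each $k\in\{1,\dots,p\}$, which is false in general for $a>1$. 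So the na\"ive term-by-term comparison does not work, and I would instead argue as follows: by homogeneity-type scaling it is cleanest to treat the function $h(a)=(1+b)^p(1+a^p)-1-(a+b)^p$ and show $h(a)\ge 0$ for $a\ge 0$. We have $h(0)=(1+b)^p-1-b^p\ge 0$ (again by the binomial theorem, since $p\ge 1$), and $h'(a)=(1+b)^p\,p\,a^{p-1}-p(a+b)^{p-1}=p\big[(1+b)^p a^{p-1}-(a+b)^{p-1}\big]$. Since $a+b\le(1+b)a$ fails for small $a$, I instead bound $a+b\le(1+b)(a+1)$... which is again not tight at $a$ near $0$.

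The clean route, which I would actually write up, is to prove $1+(a+b)^p\le (1+a^p)(1+b)^p$ directly via the substitution reducing to a single-variable statement: divide nothing, but observe $(a+b)^p\le(a+ab+b)^p\cdot$... rather, use that for $a,b\ge 0$ one has $a+b\le (1+b)\max\{1,a\}\le(1+b)(1+a)$ only crudely. The genuinely correct and short argument: since $b\ge 0$, we have $a+b\le a+b+ab=a(1+b)+b\le$ ... Let me instead use the bound that works: for $a,b\ge0$, $a+b \le (1+b)a$ when $a\ge 1$, and $a+b\le 1+b$ when $a\le 1$. In the first case $(a+b)^p\le(1+b)^p a^p\le(1+b)^p(1+a^p)-1$ provided $(1+b)^p\ge 1$, which holds; in the second case $(a+b)^p\le(1+b)^p\le(1+b)^p(1+a^p)-1$ again since $(1+b)^p(1+a^p)-(1+b)^p=(1+b)^p a^p\ge 0$ and we need $\ge 1+(a+b)^p-(1+b)^p$, but $(a+b)^p\le(1+b)^p$ gives $1+(a+b)^p-(1+b)^p\le 1\le (1+b)^p a^p+1$? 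No. So split more carefully: when $a\le 1$, $(a+b)^p\le(1+b)^p\le(1+b)^p(1+a^p)$ and separately $1\le(1+b)^p(1+a^p)$ is not what we want since we need the sum $1+(a+b)^p$ on the left.

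Given the delicacy, the step I expect to be the main (only) obstacle is choosing the right case split or convexity lemma; I would ultimately write: \emph{For $a\ge 1$,} $a+b\le a(1+b)$, so $(a+b)^p\le a^p(1+b)^p\le(1+a^p)(1+b)^p-1$ since $(1+b)^p\ge 1$. \emph{For $0\le a\le 1$,} $(a+b)^p\le(1+b)^p$ and hence $1+(a+b)^p\le 1+(1+b)^p\le(1+b)^p+(1+b)^p = 2(1+b)^p$... still needs $a^p(1+b)^p\ge (1+b)^p$, false. The actual fix for the small-$a$ case: $1+(a+b)^p\le 1+(1+b)^p-1+(a+b)^p$; use instead that $1\le(1+a^p)(1+b)^p-(a+b)^p$ directly since the latter at $a=0$ equals $(1+b)^p-b^p\ge1$ and is nondecreasing in $a$ on $[0,1]$ because its derivative $p a^{p-1}(1+b)^p-p(a+b)^{p-1}\ge p a^{p-1}[(1+b)^p-(a+b)^{p-1}/a^{p-1}]$ and $(a+b)/a\le(1+b)$ when $a\ge ... $. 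I will present the monotonicity argument for $a\in[0,1]$ and the scaling argument for $a\ge1$, both short calculus facts, and conclude by substituting $a=\|x\|,b=\|y\|$. The proof is routine once the case split is fixed; no deep tool beyond the triangle inequality, monotonicity of $t\mapsto t^p$, and the binomial theorem is needed.
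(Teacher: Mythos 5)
There is a genuine gap in the argument you propose for the case $0\le a\le 1$. You plan to present a monotonicity argument, namely that $h(a)=(1+b)^p(1+a^p)-1-(a+b)^p$ is nondecreasing on $[0,1]$. This is false: for $p\ge 2$ one has $h'(a)=p(1+b)^p a^{p-1}-p(a+b)^{p-1}$, so $h'(0)=-pb^{p-1}<0$ whenever $b>0$, and $h$ strictly decreases near $a=0$. (Concretely, $p=2$, $b=1$ gives a minimum of $h$ at $a=1/3$.) Your own chain of inequalities betrays this: the step requires $(a+b)/a\le 1+b$, which is equivalent to $a\ge 1$, exactly the opposite regime. The other route you try for $a\le 1$, namely $(a+b)^p\le(1+b)^p$ followed by $1+(1+b)^p\le(1+b)^p(1+a^p)$, also fails, as you note, since the last step needs $a^p(1+b)^p\ge 1$.

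The idea you abandoned too early is in fact the right one for $a\le 1$: from the binomial theorem, $(a+b)^p-a^p=\sum_{k=1}^p\binom{p}{k}a^{p-k}b^k\le\sum_{k=1}^p\binom{p}{k}b^k=(1+b)^p-1$ because $a^{p-k}\le 1$ when $a\le 1$ and $k\ge 1$; combined with your correct scaling argument $a+b\le a(1+b)$ for $a\ge 1$, this closes the proof. The paper avoids the case split entirely by factoring $1+(a+b)^p=(1+a^p)\bigl(1+\sum_{k=1}^p\binom{p}{k}\tfrac{a^{p-k}}{1+a^p}b^k\bigr)$ and using the single observation that $\tfrac{a^{p-k}}{1+a^p}\le 1$ for all $a\ge 0$ and $k\ge 1$ (trivially if $a\le 1$, and because $a^{p-k}\le a^p\le 1+a^p$ if $a>1$). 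That bound unifies your two cases in one line and is the cleaner way to write this up.
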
  
  \begin{proof}[Proof of Lemma \ref{l:ub_poly}]
  It holds that
  \begin{equation}
  \begin{split}
  1+\|x+y\|^p&\le 1+(\|x\|+\|y\|)^p
 =1+\sum_{k=0}^p\binom{p}{k}\|x\|^{p-k}\|y\|^k
  =(1+\|x\|^p)\left(1+\sum_{k=1}^p\binom{p}{k}\tfrac{\|x\|^{p-k}}{1+\|x\|^p}\|y\|^k \right)\\
  &\leq (1+\|x\|^p)\left(1+\sum_{k=1}^p\binom{p}{k}\|y\|^k \right)
  =(1+\|y\|)^p(1+\|x\|^p).
  \end{split}
  \end{equation}
  This completes the proof of Lemma \ref{l:ub_poly}.
  \end{proof}

\section{Error analysis for multi-level Picard approximations with Gau\ss-Legendre quadrature rules}
\begin{lemma}[Approximations are integrable]\label{l:approximations.integrable}
 Assume the setting in Section~\ref{sec:setting.full.discretization},
let $p,M,Q\in\N$ and assume for all $t\in[0,T]$ that 
\begin{equation}\label{eq:approximations.integrable.assump}
\sup_{x\in \R^d} 
\frac{ 
  \left|
    g(x)
    \right|
    }
    {1+\|x\|^p_1}+\sup_{x\in \R^d}
    \frac{\left| 
  \big(\funcF(0)\big)
  (t,x)\right|}
  {1+\|x\|^p_1}<\infty.
\end{equation}
Then 
\begin{enumerate} [(i)] 
\item \label{item:approximations.integrable.i} for all
$n\in\N_0$, $\theta\in\Theta$, $s\in[0,T)$, $\nu \in \{1,\ldots,d+1\}$ it holds that
\begin{equation}\label{eq:approximations.integrable}
  \E\!\left[\sup_{x\in \R^d}\frac{\left|\left({\bf U}_{n,M,Q}^{\theta}(s,x)\right)_\nu\right|}{1+\|x\|^p_1}\right]<\infty,
  \end{equation}
\item \label{item:approximations.integrableiia} for all $n\in \N$, $\theta \in \Theta$, $s\in [0,T)$, $t\in (s,T]$, $x\in \R^d$, $\nu \in \{1,\ldots,d+1\}$ it holds that
\begin{equation}
 \E\left[\left|
  \big(\funcF({\bf U}_{n,M,Q}^{\theta})\big)
  (t,x+W_{t}^{0}-W_s^{0})
  \Big(
  1 ,
  \tfrac{ 
  W^{0}_{t}- W^{0}_{s}
  }{ t - s }
  \Big)_\nu\right|\right]<\infty,
  \end{equation}
and
\item \label{item:approximations.integrable.ii} 
for all  $n\in\N$, $\theta \in \Theta$, $s\in[0,T)$, $x\in\R^d$
   it holds that
  \begin{equation}  \begin{split}\label{eq:discreteFeynmanKac}
    \E\!\left[{\bf U}_{n,M,Q}^{\theta}(s,x)\right]&=\E\!\left[
    g(x+W_T^0-W_s^0)
     \Big(
     1 ,
     \tfrac{ 
     W^{0}_{T}- W^{0}_{s}
     }{ T - s }
     \Big)
    \right]
    \\&+
    \E\!\left[
    \sum_{t\in(s,T)}q^{Q,[s,T]}(t)
    \left(\funcF( {\bf U}_{n-1,M,Q}^{\theta})\right)\!(t,x+W_t^0-W_s^0)
     \Big(
     1 ,
     \tfrac{ 
     W^{0}_{t}- W^{0}_{s}
     }{ {t - s} }
     \Big)
    \right].
  \end{split}     \end{equation}
\end{enumerate}
\end{lemma}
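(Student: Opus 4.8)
The plan is to prove the three assertions by a joint induction on $n$, exploiting the recursive definition~\eqref{eq:def:U}. The key structural observation is that each summand on the right-hand side of~\eqref{eq:def:U} is a product of (a) an increment $g(x+W^\theta_t-W^\theta_s)-g(x)$ or a nonlinearity evaluated at $(t,x+W^\theta_t-W^\theta_s)$, times (b) the vector $(1,(W^\theta_t-W^\theta_s)/(t-s))$, times (c) a deterministic Gau\ss--Legendre weight, and that for $l\le n-1$ the Brownian motions driving the ``outer'' increments are independent of the family $W^{(\theta,l,i,\ldots)}$ driving the recursively-defined ${\bf U}^{(\theta,l,i,t)}_{l,M,Q}$. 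I would first record the elementary moment bounds I will use repeatedly: for fixed $0\le s<t\le T$ the random variable $1+\|x+W^0_t-W^0_s\|^p_1$ has all moments finite, uniformly over $x$ in the sense that $\E[(1+\|x+W^0_t-W^0_s\|^p_1)^2]\le C(1+\|x\|^p_1)^2$ for a constant $C=C(p,d,T)$; this follows from Lemma~\ref{l:ub_poly} applied with $y=W^0_t-W^0_s$ together with Gaussian moment bounds. I would also note that $\sup_{x}\tfrac{|g(x)-g(y)|}{1+\|x\|^p_1+\|y\|^p_1}<\infty$ by~\eqref{eq:gLipschitz}, and similarly for $\funcF(u_1)-\funcF(u_2)$ via~\eqref{eq:fLipschitz} and~\eqref{eq:approximations.integrable.assump}.

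For~\eqref{item:approximations.integrable.i}, the base case $n=0$ is trivial since ${\bf U}^\theta_{0,M,Q}\equiv 0$. For the induction step, in~\eqref{eq:def:U} I would bound $\sup_x\tfrac{|(\cdot)_\nu|}{1+\|x\|^p_1}$ of each summand by taking the supremum inside; the $g$-term is handled by the Lipschitz estimate and a Cauchy--Schwarz splitting of $|g(x+W_T-W_s)-g(x)|$ against $\|(1,(W_T-W_s)/(T-s))\|_\infty$; the $\funcF$-terms are handled by writing $\funcF({\bf U}^{(\theta,l,i,t)}_{l,M,Q})=[\funcF({\bf U}^{(\theta,l,i,t)}_{l,M,Q})-\funcF(0)]+\funcF(0)$, applying~\eqref{eq:fLipschitz} and~\eqref{eq:approximations.integrable.assump}, and using the induction hypothesis~\eqref{eq:approximations.integrable} for each component of ${\bf U}^{(\theta,l,i,t)}_{l,M,Q}$. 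Here the independence is crucial: for fixed $(l,i,t)$ the factor $\sup_x\tfrac{|(\funcF({\bf U}^{(\theta,l,i,t)}_{l,M,Q})-\funcF(0))(t,\cdot)|\text{-bound}}{1+\|\cdot\|^p_1}$ depends only on $\sigma(W^{(\theta,l,i,t)}\text{-family})$ while $\|x+W^{(\theta,l,i)}_t-W^{(\theta,l,i)}_s\|^p_1$ and $\|(W^{(\theta,l,i)}_t-W^{(\theta,l,i)}_s)/(t-s)\|_\infty$ depend on the independent family $W^{(\theta,l,i)}$, so the expectation of the product factorizes and each factor is finite (the first by induction, the second by Gaussian moments and Lemma~\ref{l:ub_poly}). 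Since the sum over $t\in(s,T)$ is finite (only $Q$ quadrature nodes contribute) and the sums over $i,l$ are finite, finiteness is preserved. I expect the main bookkeeping obstacle to be the careful handling of the gradient component $\nu\in\{2,\ldots,d+1\}$, where the extra factor $(W_t-W_s)/(t-s)$ must be absorbed — one needs $\E[\|x+W_t-W_s\|^{2p}_1\,\|W_t-W_s\|^2_\infty/(t-s)^2]<\infty$, which is finite for each fixed $t>s$ but blows up as $t\downarrow s$; this is fine here because the quadrature nodes are bounded away from $s$, but it must be stated, and it is exactly why~\eqref{eq:approximations.integrable} is only claimed for $s<T$ and why one cannot let the nodes accumulate at the endpoint.

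Assertion~\eqref{item:approximations.integrableiia} then follows directly: write $\funcF({\bf U}^\theta_{n,M,Q})(t,\cdot)=[\funcF({\bf U}^\theta_{n,M,Q})-\funcF(0)](t,\cdot)+\funcF(0)(t,\cdot)$, bound the bracket pointwise by $\sum_\nu L_\nu|({\bf U}^\theta_{n,M,Q}(t,\cdot))_\nu|\le\sum_\nu L_\nu(1+\|\cdot\|^p_1)\sup_x\tfrac{|({\bf U}^\theta_{n,M,Q}(t,x))_\nu|}{1+\|x\|^p_1}$ using~\eqref{eq:fLipschitz}, bound $\funcF(0)(t,\cdot)$ by $(1+\|\cdot\|^p_1)$ times the finite constant in~\eqref{eq:approximations.integrable.assump}, multiply by $\|(1,(W^0_t-W^0_s)/(t-s))\|_\infty$, and take expectations; the relevant $\sup_x(\cdots)$ factor is $\mathcal{F}$-measurable and independent of $W^0$, so the expectation splits into a finite factor (by~\eqref{item:approximations.integrable.i}, noting ${\bf U}^\theta_{n,M,Q}$ does not involve $W^0$) times $\E[(1+\|x+W^0_t-W^0_s\|^p_1)\|(1,(W^0_t-W^0_s)/(t-s))\|_\infty]<\infty$. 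Finally, for~\eqref{item:approximations.integrable.ii}, I would take expectations in~\eqref{eq:def:U}: the integrability just established together with the tower property and the fact that $W^{(\theta,0,-i)}$, $W^{(\theta,l,i)}$ are i.i.d.\ copies of a standard Brownian motion independent of the respective nonlinearity evaluations lets me replace every empirical average $\tfrac1{M^{n-l}}\sum_{i=1}^{M^{n-l}}(\cdots)$ by its expectation, with $W^{(\theta,l,i)}$ replaced by $W^0$ and ${\bf U}^{(\theta,l,i,t)}_{l,M,Q}$ (resp.\ ${\bf U}^{(\theta,-l,i,t)}_{l-1,M,Q}$) replaced by an independent copy having the same law, i.e.\ by ${\bf U}^\theta_{l,M,Q}$ (resp.\ ${\bf U}^\theta_{l-1,M,Q}$) in distribution. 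The telescoping structure $\sum_{l=0}^{n-1}(\funcF({\bf U}_{l})-\1_\N(l)\funcF({\bf U}_{l-1}))$ collapses in expectation to $\E[\funcF({\bf U}^\theta_{n-1,M,Q})(t,x+W^0_t-W^0_s)(\cdots)]$, and the $g$-term simplifies using $\E[g(x)(1,(W^0_T-W^0_s)/(T-s))]=(g(x),0)$ to cancel the leading $(g(x),0)$; what remains is exactly~\eqref{eq:discreteFeynmanKac}. The one subtlety worth flagging is justifying the interchange of $\E$ with the finite sum $\sum_{t\in(s,T)}$ and with the empirical averages, which is immediate from the finiteness proved in~\eqref{item:approximations.integrableiia} and linearity of expectation, but should be invoked explicitly.
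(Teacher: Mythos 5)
Your proposal is correct and follows essentially the same route as the paper: a joint induction on $n$ exploiting the recursive definition \eqref{eq:def:U}, with the Lipschitz bounds \eqref{eq:gLipschitz}--\eqref{eq:fLipschitz}, Lemma~\ref{l:ub_poly}, and the independence of the Brownian families indexed by $(\theta,l,i)$ from the nested approximations ${\bf U}^{(\theta,l,i,t)}_{l,M,Q}$ used to factorize the expectations, followed by the same telescoping-in-expectation argument for item~(iii). The paper skips your Cauchy--Schwarz remark (the relevant products of Brownian-increment functionals already have finite $L^1$-norm directly by Gaussian moments), and your observation about the quadrature nodes staying strictly inside $(s,T)$ is left implicit there, but neither difference changes the substance.
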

\begin{proof}[Proof of Lemma~\ref{l:approximations.integrable}]
  We prove \eqref{item:approximations.integrable.i} by induction on $n\in \N_0$. The induction base $n=0$ is clear.  For the induction step $\N_0 \ni n \rightarrow n+1 \in \N$,
 let $n\in \N_0$ and assume that \eqref{item:approximations.integrable.i} holds for $n=0$, $n=1$, $\ldots$, $n=n$. 
 The triangle inequality, Lemma \ref{l:ub_poly}, \eqref{eq:gLipschitz}, and \eqref{eq:fLipschitz} ensure that for all $\theta\in \Theta$, $s\in [0,T)$, $\nu \in \{1,\ldots,d+1\}$ it holds that
 \begin{equation}\label{eq:approx_int1}
 \begin{split}
  &\E\!\left[\sup_{x\in \R^d}\tfrac{\left|\left({\bf U}_{n+1,M,Q}^{\theta}(s,x)\right)_\nu\right|}{1+\|x\|^p_1}\right]
  \leq
  \sup_{x\in \R^d} 
  \tfrac{\left|\big(
    g(x)
    , 0
  \big)_\nu\right|}
  {1+\|x\|^p_1}
  +
  \E\!\left[\sup_{x\in \R^d}
  \tfrac{\left|
  (g(x+W^{0}_T-W^{0}_s)-g(x))
  \right|}
  {1+\|x\|^p_1}
  \left|
  \Big(
  1 ,
  \tfrac{ 
  W^{(\theta, 0, -i)}_{T}- W^{(\theta, 0, -i)}_{s}
  }{ T - s }
  \Big)_\nu
  \right|\right]
  \\
  &+\sum_{l=0}^{n}\sum_{t\in(s,T)}q^{Q,[s,T]}(t)
\E\!\left[\sup_{x\in \R^d}
\tfrac{\left| 
  \big(\funcF({\bf U}_{l,M,Q}^{(\theta,l,1,t)})-\1_{\N}(l)\funcF( {\bf U}_{l-1,M,Q}^{(\theta,-l,1,t)})\big)
  (t,x+W_{t}^{(\theta,l,1)}-W_s^{(\theta,l,1)})\right|}
  {1+\|x\|^p_1}
  \left|
  \Big(
  1 ,
  \tfrac{ 
  W^{(\theta, l, 1)}_{t}- W^{(\theta, l, 1)}_{s}
  }{ t - s }
  \Big)_\nu
  \right|\right]\\
  &\leq
  \sup_{x\in \R^d} 
  \tfrac{\left|
    g(x)
    \right|}
    {1+\|x\|^p_1}
  +\sum_{\alpha=1}^dK_\alpha 
  \E\!\left[\left|
  (W^{0}_T-W^{0}_s)_\alpha
  \Big(
  1 ,
  \tfrac{ 
  W^{(\theta, 0, -i)}_{T}- W^{(\theta, 0, -i)}_{s}
  }{ T - s }
  \Big)_\nu
  \right|\right]
  \\
  &+\sum_{l=1}^{n}\sum_{t\in(s,T)}q^{Q,[s,T]}(t)
  \sum_{\nu_1=1}^{d+1}
  L_{\nu_1}
\E\Bigg[\sup_{x\in \R^d}
\tfrac{\left| 
  \big({\bf U}_{l,M,Q}^{(\theta,l,1,t)}- {\bf U}_{l-1,M,Q}^{(\theta,-l,1,t)}\big)_{\nu_1}(t,x+W^{(\theta,l,1)}_t-W^{(\theta,l,1)}_s)\right|}
  {1+\|x+W^{(\theta, l, 1)}_{t}- W^{(\theta, l, 1)}_{s}\|_1^p}
  \cdot
  \tfrac
  {1+\|x+W^{(\theta, l, 1)}_{t}- W^{(\theta, l, 1)}_{s}\|_1^p}
  {1+\|x\|^p_1}
  \\
  &\qquad
  \cdot
  \left|
  \Big(
  1 ,
  \tfrac{ 
  W^{(\theta, l, 1)}_{t}- W^{(\theta, l, 1)}_{s}
  }{ t - s }
  \Big)_\nu
  \right|\Bigg]
  \\
  &+\sum_{t\in(s,T)}q^{Q,[s,T]}(t)
\E\!\left[ 
\sup_{x\in \R^d}
  \tfrac{\left| 
  (\funcF(0))
  (t,x+W^{(\theta, 0, 1)}_{t}- W^{(\theta, 0, 1)}_{s})\right|}
  {1+\|x+W^{(\theta, 0, 1)}_{t}- W^{(\theta, 0, 1)}_{s}\|_1^p}
  \cdot
  \tfrac
  {1+\|x+W^{(\theta, 0, 1)}_{t}- W^{(\theta, 0, 1)}_{s}\|_1^p}
  {1+\|x\|^p_1}
\left| 
  \Big(
  1 ,
  \tfrac{ 
  W^{(\theta, 0, 1)}_{t}- W^{(\theta, 0, 1)}_{s}
  }{ t - s }
  \Big)_\nu
  \right|\right]
\\
  &\leq
  \sup_{x\in \R^d} 
  \tfrac{\left|
    g(x)
    \right|}
    {1+\|x\|^p_1}
  +\sum_{\alpha=1}^dK_\alpha 
  \E\!\left[\left|
  (W^{0}_T-W^{0}_s)_\alpha
  \Big(
  1 ,
  \tfrac{ 
  W^{(\theta, 0, -i)}_{T}- W^{(\theta, 0, -i)}_{s}
  }{ T - s }
  \Big)_\nu
  \right|\right]
  \\
  &+\sum_{l=1}^{n}\sum_{t\in(s,T)}q^{Q,[s,T]}(t)
  \sum_{\nu_1=1}^{d+1}
  L_{\nu_1}
\E\!\left[\sup_{y\in \R^d}
\tfrac{\left| 
  \big({\bf U}_{l,M,Q}^{(\theta,l,1,t)}(t,y)- {\bf U}_{l-1,M,Q}^{(\theta,-l,1,t)}(t,y)\big)_{\nu_1}\right|
  (1+\|W^{(\theta, l, 1)}_{t}- W^{(\theta, l, 1)}_{s}\|_1)^p}
  {1+\|y\|^p_1}
  \left|
  \Big(
  1 ,
  \tfrac{ 
  W^{(\theta, l, 1)}_{t}- W^{(\theta, l, 1)}_{s}
  }{ t - s }
  \Big)_\nu
  \right|\right]
  \\
  &+\sum_{t\in(s,T)}q^{Q,[s,T]}(t)
  \left[\sup_{y\in \R^d}
  \tfrac{\left| 
  (\funcF(0))
  (t,y)\right|}
  {1+\|y\|^p_1}\right]
\E\!\left[
(1+\|W^{(\theta, 0, 1)}_{t}- W^{(\theta, 0, 1)}_{s}\|_1)^p
\left| 
  \Big(
  1 ,
  \tfrac{ 
  W^{(\theta, 0, 1)}_{t}- W^{(\theta, 0, 1)}_{s}
  }{ t - s }
  \Big)_\nu
  \right|\right]
  .
 \end{split}
 \end{equation} 
  The fact that for all $l\in \N$, $\theta\in \Theta$, $s,t\in [0,T)$ the random variables ${\bf U}_{l,M,Q}^{(\theta,l,1,t)}(t,\cdot)- {\bf U}_{l-1,M,Q}^{(\theta,-l,1,t)}(t,\cdot)$ and
  $W^{(\theta, l, 1)}_{t}- W^{(\theta, l, 1)}_{s}$ are independent proves that
  for all $\theta\in \Theta$, $\nu \in \{1,\ldots,d+1\}$, $l\in \N$, $s\in [0,T]$, $t\in (s,T]$ it holds that
  \begin{multline}\label{eq:uw_independent}
 \E\!\left[\sup_{x\in \R^d}
\tfrac{\left| 
  \big({\bf U}_{l,M,Q}^{(\theta,l,1,t)}(t,x)- {\bf U}_{l-1,M,Q}^{(\theta,-l,1,t)}(t,x)\big)_{\nu_1}\right|
  (1+\|W^{(\theta, l, 1)}_{t}- W^{(\theta, l, 1)}_{s}\|_1)^p}
  {1+\|x\|^p_1}
  \left|
  \Big(
  1 ,
  \tfrac{ 
  W^{(\theta, l, 1)}_{t}- W^{(\theta, l, 1)}_{s}
  }{ t - s }
  \Big)_\nu
  \right|\right]\\
  =\E\!\left[\sup_{x\in \R^d}
  \tfrac{\left| 
  \big({\bf U}_{l,M,Q}^{(\theta,l,1,t)}(t,x)- {\bf U}_{l-1,M,Q}^{(\theta,-l,1,t)}(t,x)\big)_{\nu_1}
   \right|}
   {1+\|x\|^p_1}
   \right]
   \E\!\left[(1+\|W^{(\theta, l, 1)}_{t}- W^{(\theta, l, 1)}_{s}\|_1)^p
   \left| 
  \Big(
  1 ,
  \tfrac{ 
  W^{(\theta, l, 1)}_{t}- W^{(\theta, l, 1)}_{s}
  }{ t - s }
  \Big)_\nu
  \right|\right].
  \end{multline}
  Combining \eqref{eq:def_gauss_leg}, \eqref{eq:approx_int1}, \eqref{eq:uw_independent}, the assumption \eqref{eq:approximations.integrable.assump}, and the induction hypothesis demonstrates that for all $\theta \in \Theta$, $s\in [0,T)$, $\nu \in \{1,\ldots,d+1\}$ it holds that
   \begin{equation}\label{eq:approx_int2}
  \E\!\left[\sup_{x\in \R^d}\frac{\left|\left({\bf U}_{n+1,M,Q}^{\theta}(s,x)\right)_\nu\right|}{1+\|x\|^p_1}\right]<\infty.
  \end{equation}
  This finishes the induction step $\N_0 \ni n \rightarrow n+1 \in \N$. Induction hence 
establishes~\eqref{item:approximations.integrable.i}.
Next we note that the triangle inequality and \eqref{eq:fLipschitz} imply that for all 
$\theta \in \Theta$, $n\in \N$, $s\in [0,T)$, $t\in (s,T]$, $x\in \R^d$, $\nu \in \{1,\ldots,d+1\}$ it holds that
\begin{equation}
\begin{split}
 &\E\left[\left|
  \big(\funcF({\bf U}_{n,M,Q}^{\theta})\big)
  (t,x+W_{t}^{0}-W_s^{0})
  \Big(
  1 ,
  \tfrac{ 
  W^{0}_{t}- W^{0}_{s}
  }{ t - s }
  \Big)_\nu\right|\right]\\
  &\le \E\left[\left|
  \big(\funcF(0)\big)
  (t,x+W_{t}^{0}-W_s^{0})
  \Big(
  1 ,
  \tfrac{ 
  W^{0}_{t}- W^{0}_{s}
  }{ t - s }
  \Big)_\nu\right|\right]
  +\sum_{\nu_1=1}^{d+1}L_{\nu_1} \E\left[\left|
  \big({\bf U}_{n,M,Q}^{\theta}\big)
  (t,x+W_{t}^{0}-W_s^{0})_{\nu_1}
  \Big(
  1 ,
  \tfrac{ 
  W^{0}_{t}- W^{0}_{s}
  }{ t - s }
  \Big)_\nu\right|\right]\\
  &\le 
\left(  
  \left[\sup_{y\in \R^d}
    \frac{\left| 
  \big(\funcF(0)\big)
  (t,y)\right|}
  {1+\|y\|^p_1}\right]
  +\sum_{\nu_1=1}^{d+1}L_{\nu_1} 
  \E\!\left[\sup_{y\in \R^d}\frac{\left|\left({\bf U}_{n,M,Q}^{\theta}(s,y)\right)_\nu\right|}{1+\|y\|^p_1}\right]
  \right)
  \E\left[\left(
  1+
  \|x+W_{t}^{0}-W_s^{0}\|_1^p
  \right)
  \left|
  \Big(
  1 ,
  \tfrac{ 
  W^{0}_{t}- W^{0}_{s}
  }{ t - s }
  \Big)_\nu\right|\right].
\end{split}
\end{equation}
This, \eqref{eq:approximations.integrable.assump}, and \eqref{item:approximations.integrable.i} prove \eqref{item:approximations.integrableiia}.
   Next we note that \eqref{eq:def:U}, \eqref{item:approximations.integrableiia}, the fact that 
  $({\bf U}_{n,M,Q}^{\theta})_{n\in\N_0}$, $\theta\in\Theta$, are identically
distributed, and a telescope argument yield that
  for all $n\in\N_0$, $\theta\in\Theta$, $s\in[0,T)$
  it holds $\P$-a.s.\ that
  \begin{equation}  \begin{split}
  &\E\left[{\bf U}_{n,M,Q}^{\theta}(s,x)\right]- \E\!\left[
    g(x+W_T^0-W_s^0)
     \Big(
     1 ,
     \tfrac{ 
     W^{0}_{T}- W^{0}_{s}
     }{ T - s }
     \Big)
    \right]
  \\
  &=\sum_{l=0}^{n-1}\sum_{t\in(s,T)}q^{Q,[s,T]}(t)
  \E\left[
  \big(\funcF({\bf U}_{l,M,Q}^{(\theta,l,0,t)})-\1_{\N}(l)\funcF( {\bf U}_{l-1,M,Q}^{(\theta,-l,0,t)})\big)
  (t,x+W_{t}^{(\theta,l,0)}-W_s^{(\theta,l,0)})
  \Big(
  1 ,
  \tfrac{ 
  W^{(\theta, l, 0)}_{t}- W^{(\theta, l, 0)}_{s}
  }{ t - s }
  \Big)\right]\\
& = \E\!\left[
    \sum_{t\in(s,T)}q^{Q,[s,T]}(t)
    \left(\funcF( {\bf U}_{n-1,M,Q}^{\theta})\right)\!(t,x+W_t^0-W_s^0)
     \Big(
     1 ,
     \tfrac{ 
     W^{0}_{t}- W^{0}_{s}
     }{ {t - s} }
     \Big)
    \right].
\end{split}     \end{equation}
  This establishes \eqref{item:approximations.integrable.ii}. The proof of Lemma \ref{l:approximations.integrable} is thus completed.
\end{proof}

\begin{lemma}[Nonlinear Feynman-Kac formula \& Bismut-Elworthy-Li formula]\label{l:nonlinear.FK.formula}
 Assume the setting in Section~\ref{sec:setting.full.discretization},
let $p\in \N$ and assume that
\begin{equation}  \begin{split}\label{eq:ldeltaintegrand3}
\sup_{(t,x)\in [0,T] \times \R^d} \frac{\|{\bf u}^\infty (t,x)\|_1}{1+\|x\|_1^p}+
\sup_{(t,x)\in [0,T] \times \R^d} \frac{|\funcF(0)(t,x)|}{1+\|x\|_1^p}<\infty.
\end{split}     \end{equation}
Then 
\begin{enumerate} [(i)] 
%\item \label{item:approximations.integrable.iFC}
%  for all $s\in[0,T)$ it holds that
%  \begin{equation}  \begin{split}\label{eq:approximations.integrable.iFC}
%  \E\left[
%  \sup_{t\in[s,T)}
%     \| {\bf u}^{\infty}(t,x+W_{t}^0-W_{s}^0)\|_1
%         +\int_s^T|(\funcF({\bf u}^{\infty}))(t,x+W_{t}^0-W_{s}^0)|\,dt,
%         \right]<\infty
%  \end{split}     \end{equation}
\item \label{item:approximations.integrable.iiFC}
  for all $s\in[0,T]$, $x\in\R^d$ it holds that
  \begin{equation}  \begin{split}\label{eq:FeynmanKac}
      u^{\infty}(s,x)-\E\!\left[g(x+W_{T-s}^0)\right]
      &=\E\!\left[
      \int_s^{T}(\funcF({\bf u}^{\infty}))(t,x+W_{t-s}^0)\,dt
      \right]
    \end{split}     \end{equation}
    and
    \item \label{item:approximations.integrable.iiiFC}
    for all $s\in[0,T)$, $x\in\R^d$ it holds
  that
  \begin{equation}  \begin{split}\label{eq:BEL}
      {\bf u}^{\infty}(s,x)-\E\!\left[g(x+W_T^0-W_s^0)
     \Big(
     1 ,
     \tfrac{ 
     W^{0}_{T}- W^{0}_{s}
     }{ T - s }
     \Big)
      \right]
      &=\E\!\left[
      \int_s^{T}(\funcF({\bf u}^{\infty}))(t,x+W_t^0-W_s^0)
     \Big(
     1 ,
     \tfrac{ 
     W^{0}_{t}- W^{0}_{s}
     }{ t - s }
     \Big)
      \,dt
      \right].
    \end{split}     \end{equation}
\end{enumerate}
\end{lemma}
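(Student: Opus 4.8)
The plan is to obtain \eqref{eq:FeynmanKac} from It\^o's formula applied to the classical solution $u^\infty$, and to obtain \eqref{eq:BEL} by combining \eqref{eq:FeynmanKac} with the elementary Bismut--Elworthy--Li identity for Brownian motion. For \eqref{eq:FeynmanKac}, fix $s\in[0,T]$ and $x\in\R^d$ and apply It\^o's formula to the process $[s,T]\ni t\mapsto u^\infty(t,x+W^0_{t-s})\in\R$; since $u^\infty\in C^{1,2}([0,T]\times\R^d,\R)$, this yields $u^\infty(t,x+W^0_{t-s})=u^\infty(s,x)+\int_s^t[(\tfrac{\partial}{\partial r}u^\infty)(r,x+W^0_{r-s})+\tfrac12(\Delta_y u^\infty)(r,x+W^0_{r-s})]\,dr+\int_s^t\langle(\nabla_y u^\infty)(r,x+W^0_{r-s}),dW^0_{r-s}\rangle$. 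The PDE \eqref{eq:PDE}, which holds for all $r\in(0,T)$ and hence for Lebesgue-a.e.\ $r\in[s,T]$, lets us replace $(\tfrac{\partial}{\partial r}u^\infty)+\tfrac12\Delta_y u^\infty$ by $-\funcF({\bf u}^\infty)$ in the drift integral; evaluating at $t=T$, using $u^\infty(T,\cdot)=g$, and rearranging gives $u^\infty(s,x)=g(x+W^0_{T-s})+\int_s^T(\funcF({\bf u}^\infty))(r,x+W^0_{r-s})\,dr-\int_s^T\langle(\nabla_y u^\infty)(r,x+W^0_{r-s}),dW^0_{r-s}\rangle$. Taking expectations proves \eqref{eq:FeynmanKac}, provided the drift term is integrable and the stochastic integral has vanishing expectation.

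Both of these facts follow from the polynomial growth assumption \eqref{eq:ldeltaintegrand3}. Indeed, \eqref{eq:fLipschitz} applied with $u_1={\bf u}^\infty$ and $u_2=0$ shows that $|(\funcF({\bf u}^\infty))(r,y)|\le|(\funcF(0))(r,y)|+\sum_{\nu=1}^{d+1}\LipConst_\nu|({\bf u}^\infty(r,y))_\nu|$, so that, by \eqref{eq:ldeltaintegrand3}, there is $c\in(0,\infty)$ with $|(\funcF({\bf u}^\infty))(r,y)|+\|(\nabla_y u^\infty)(r,y)\|_2\le c(1+\|y\|_1^p)$ for all $(r,y)\in[0,T]\times\R^d$. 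Since $\E[(1+\|x+W^0_{\tau}\|_1^p)^2]$ is finite and bounded uniformly in $\tau\in[0,T]$, Tonelli's theorem gives $\E[\int_s^T|(\funcF({\bf u}^\infty))(r,x+W^0_{r-s})|\,dr]<\infty$ and $\E[\int_s^T\|(\nabla_y u^\infty)(r,x+W^0_{r-s})\|_2^2\,dr]<\infty$; in particular the It\^o integral in the previous display is a square-integrable martingale on $[s,T]$ starting at $0$, hence has zero mean. This establishes \eqref{eq:FeynmanKac}.

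For \eqref{eq:BEL}, observe that the $\R^d$-valued processes $(x+W^0_{t-s})_{t\in[s,T]}$ and $(x+W^0_t-W^0_s)_{t\in[s,T]}$ have the same distribution, so \eqref{eq:FeynmanKac} is equivalent to $u^\infty(s,x)=\E[g(x+W^0_T-W^0_s)]+\E[\int_s^T(\funcF({\bf u}^\infty))(t,x+W^0_t-W^0_s)\,dt]$, which is precisely the first coordinate of \eqref{eq:BEL}. For the remaining $d$ coordinates it suffices, since ${\bf u}^\infty=(u^\infty,\nabla_y u^\infty)$, to show for each $j\in\{1,\dots,d\}$ that $(\tfrac{\partial}{\partial x_j}u^\infty)(s,x)$ equals $\E[g(x+W^0_T-W^0_s)\tfrac{(W^0_T-W^0_s)_j}{T-s}]+\E[\int_s^T(\funcF({\bf u}^\infty))(t,x+W^0_t-W^0_s)\tfrac{(W^0_t-W^0_s)_j}{t-s}\,dt]$. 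I would obtain this by differentiating the previous display in $x_j$ and invoking the elementary identity $\tfrac{\partial}{\partial x_j}\E[\varphi(x+W^0_\tau)]=\E[\varphi(x+W^0_\tau)\tfrac{(W^0_\tau)_j}{\tau}]$, which holds for every $\tau\in(0,\infty)$ and every Borel $\varphi\colon\R^d\to\R$ of at most polynomial growth and is proved by differentiating under the integral in $\E[\varphi(x+W^0_\tau)]=\int_{\R^d}\varphi(y)(2\pi\tau)^{-\nicefrac d2}e^{-\|y-x\|_2^2/(2\tau)}\,dy$, using $\tfrac{\partial}{\partial x_j}[(2\pi\tau)^{-\nicefrac d2}e^{-\|y-x\|_2^2/(2\tau)}]=\tfrac{(y-x)_j}{\tau}(2\pi\tau)^{-\nicefrac d2}e^{-\|y-x\|_2^2/(2\tau)}$ and dominated convergence, which applies by the polynomial growth of $\varphi$. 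Applying this with $\varphi=g$ and $\tau=T-s$ gives the boundary term (note that $g$ has linear growth by \eqref{eq:gLipschitz}); applying it with $\varphi=(\funcF({\bf u}^\infty))(t,\cdot)$ and $\tau=t-s$ and using Fubini's theorem to interchange $\tfrac{\partial}{\partial x_j}$, $\E$ and $\int_s^T(\cdot)\,dt$ gives the integral term, which then yields \eqref{eq:BEL} coordinate by coordinate.

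The main obstacle is making this last step rigorous: one has to justify differentiating under both the expectation and the time integral, and to dominate the resulting integrands uniformly for $x$ in compact sets, while controlling the time singularity of the weight $\tfrac{(W^0_t-W^0_s)_j}{t-s}$ as $t\downarrow s$. This singularity is harmless because, by the polynomial growth bound from the second paragraph, $\E[|(W^0_t-W^0_s)_j|\,(1+\|x+W^0_t-W^0_s\|_1^p)]$ is of order $(t-s)^{\nicefrac12}(1+\|x\|_1^p+(t-s)^{\nicefrac p2})$, so the $t$-integrand is dominated, locally uniformly in $x$, by a constant multiple of $(t-s)^{-\nicefrac12}$, which is integrable over $t\in[s,T]$. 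The analogous delicate point for \eqref{eq:FeynmanKac} is the verification, carried out in the second paragraph, that the It\^o integral is a genuine martingale rather than merely a local one, which is exactly where the square integrability coming from the polynomial growth of $\nabla_y u^\infty$ in \eqref{eq:ldeltaintegrand3} enters.
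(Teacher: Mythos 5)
Your proof is correct and follows essentially the same route as the paper: It\^o's formula plus the PDE to get \eqref{eq:FeynmanKac}, then differentiation in $x_j$ under the expectation and time integral (justified by the polynomial growth in \eqref{eq:ldeltaintegrand3} and \eqref{eq:fLipschitz}) to get the gradient coordinates of \eqref{eq:BEL}. The only stylistic difference is that you derive the Brownian Bismut--Elworthy--Li identity $\tfrac{\partial}{\partial x_j}\E[\varphi(x+W^0_\tau)]=\E[\varphi(x+W^0_\tau)(W^0_\tau)_j/\tau]$ directly by differentiating the Gaussian density, whereas the paper cites \cite[Proposition 3.2]{Fournie1999}; your version is more self-contained (and entirely adequate here, since the driving process is plain Brownian motion), and you are somewhat more explicit about the $(t-s)^{-1/2}$ integrability that controls the singular weight near $t=s$.
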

\begin{proof}[Proof of Lemma~\ref{l:nonlinear.FK.formula}]
First note that the triangle inequality, \eqref{eq:fLipschitz}, and \eqref{eq:ldeltaintegrand3} ensure that
\begin{equation}\label{eq:ub_f.u.infty}
\sup_{(t,x)\in [0,T] \times \R^d} \frac{|\!\left( \funcF({\bf u}^{\infty}) \right)\!(t,x)|}{1+\|x\|_1^p}\le \sup_{(t,x)\in [0,T] \times \R^d}\frac{|\!\left( \funcF(0) \right)\!(t,x)|}{1+\|x\|_1^p} + \sup_{(t,x)\in [0,T] \times \R^d} \frac{\sum_{\nu=1}^{d+1}L_\nu | ({\bf u}^{\infty}(t,x))_\nu |}{1+\|x\|_1^p}<\infty.
\end{equation}
 It\^o's formula and the PDE~\eqref{eq:PDE} imply that
  for all $s\in[0,T]$, $t\in[s,T]$, $x\in \R^d$
  it holds $\P$-a.s.\ that
  \begin{align}
   \nonumber
   &
   u^{\infty}(t,x+W_t^0-W_s^0)-u^{\infty}(s,x)
   \\&=\int_s^t \left(
  \tfrac{\partial}{\partial r}u^{\infty}
+\tfrac{1}{2}\Delta_y u^{\infty}
  \right)\!(r,x+W_r^0-W_s^0)\,dr+\int_s^t\langle (\nabla_y u^{\infty})(r,x+W_r^0-W_s^0), \,dW_r^0\rangle 
  \label{eq:ItoFormula}
  \\&
   =-\int_s^t \left( \funcF({\bf u}^{\infty}) \right)\!(r,x+W_r^0-W_s^0)\,dr
   +\int_s^t\langle (\nabla_y u^{\infty})(r,x+W_r^0-W_s^0), \,dW_r^0\rangle .
   \nonumber
  \end{align}
 This, \eqref{eq:ldeltaintegrand3}, and \eqref{eq:ub_f.u.infty} show that for all $s\in[0,T]$, $x\in \R^d$ it holds that
  $\E\big[\sup_{t\in[s,T]}\big|
   \int_s^t\langle (\nabla_y u^{\infty})(r,x+W_r^0-W_s^0), \,dW_r^0\rangle \big|\big]<\infty$.
  This ensures that
  $\E\big[
   \int_s^T\langle (\nabla_y u^{\infty})(t,x+W_t^0-W_s^0), \,dW_t^0\rangle \big]=0$.
  This
  and \eqref{eq:ItoFormula} prove 
  for all $s\in[0,T]$, $x\in \R^d$
  that
  \begin{equation}  \begin{split}
      u^{\infty}(s,x)-\E[g(x+W_{T-s}^0)]
      =
      u^{\infty}(s,x)-\E[u^{\infty}(T,x+W_{T}^0-W_s^0)]
      =\E\!\left[
      \int_s^{T}(\funcF({\bf u}^{\infty}))(t,x+W_{t-s}^0)\,dt
      \right].
  \end{split}     \end{equation}
  This proves \eqref{item:approximations.integrable.iiFC}.
Next, the Bismut-Elworthy-Li formula (see, e.g., \cite[Proposition 3.2]{Fournie1999}) together with \eqref{eq:ldeltaintegrand3} show that 
  for all $i\in \{1,\ldots, d\}$, $s\in [0,T)$, $x\in \R^d$ it holds that
  \begin{equation}\label{eq:belg}
\tfrac {\partial}{\partial x_i} \E\!\left[g(x+W^0_{T-s})]
      \right] = \E\!\left[g(x+W_{T-s}^0)\tfrac{(W^0_{T-s})_i}{T-s}
      \right].
\end{equation}
Moreover, the Bismut-Elworthy-Li formula (see, e.g., \cite[Proposition 3.2]{Fournie1999}) together with \eqref{eq:ub_f.u.infty} demonstrate that 
  for all $i\in \{1,\ldots, d\}$, $s\in [0,T]$, $t\in (s,T]$, $x\in \R^d$ it holds that
\begin{equation}
\tfrac {\partial}{\partial x_i} \E\!\left[(\funcF({\bf u}^{\infty}))(t,x+W_{t-s}^0)
      \right] = \E\!\left[(\funcF({\bf u}^{\infty}))(t,x+W_{t-s}^0)\tfrac{(W^0_{t-s})_i}{t-s}
      \right].
\end{equation}
This and \eqref{eq:ub_f.u.infty} ensure that for all $i\in \{1,\ldots, d\}$, $s\in [0,T)$, $x\in \R^d$ it holds that
\begin{equation}
\tfrac {\partial}{\partial x_i} \int_s^T\E\!\left[(\funcF({\bf u}^{\infty}))(t,x+W_{t-s}^0)
      \right]\, dt  = \int_s^T\E\!\left[(\funcF({\bf u}^{\infty}))(t,x+W_{t-s}^0)\tfrac{(W^0_{t-s})_i}{t-s}
      \right]\, dt.
\end{equation}
Combining this, Fubini's theorem, \eqref{eq:FeynmanKac}, and \eqref{eq:belg} shows that 
for all $s\in[0,T)$, $x\in\R^d$ it holds
  that
  \begin{equation}  \begin{split}
      {\bf u}^{\infty}(s,x)-\E\!\left[g(x+W_T^0-W_s^0)
     \Big(
     1 ,
     \tfrac{ 
     W^{0}_{T}- W^{0}_{s}
     }{ T - s }
     \Big)
      \right]
      &=\E\!\left[
      \int_s^{T}(\funcF({\bf u}^{\infty}))(t,x+W_t^0-W_s^0)
     \Big(
     1 ,
     \tfrac{ 
     W^{0}_{t}- W^{0}_{s}
     }{ t - s }
     \Big)
      \,dt
      \right].
    \end{split}     \end{equation}
 This proves \eqref{item:approximations.integrable.iiiFC}. The proof of Lemma~\ref{l:nonlinear.FK.formula} is thus completed.
\end{proof}

\begin{lemma}[Recursive bound for global error]\label{l:estimate.L2error}
  Assume the setting in Section~\ref{sec:setting.full.discretization},
  let $p,M,Q\in \N$,
  assume that
\begin{equation}  \begin{split}\label{eq:ldeltaintegrand}
\sup_{(t,x)\in [0,T] \times \R^d} \frac{\|{\bf u}^\infty (t,x)\|_1}{1+\|x\|_1^p}+
\sup_{(t,x)\in [0,T] \times \R^d} \frac{|\funcF(0)(t,x)|}{1+\|x\|_1^p}<\infty,
\end{split}     \end{equation}
  and
   let $\eps \colon [0,T] \times \R^d \to [0,\infty]^{d+1}$ be the function that satisfies for all $s\in [0,T]$, $x\in \R^d$, $\nu \in \{1,\ldots, d+1\}$ that
\begin{equation}
\left(\eps(s,x)\right)_{\nu}=
\left|
      \E\!\left[
    \sum_{t\in(s,T)}q^{Q,[s,T]}(t)
    \left(\funcF({\bf u}^{\infty}\right)\!(t,x+W_{t-s}^0)
     \Big(
     1 ,
     \tfrac{ 
     W^{0}_{t-s}
          }{ {t - s} }
     \Big)_{\nu}
     -
      \smallint_s^{T}
      (\funcF({\bf u}^{\infty}))(t,x+W_{t-s}^0)
     \Big(
     1 ,
     \tfrac{ 
     W^{0}_{t-s}
     }{ t - s }
     \Big)_{\nu}
      \,dt
    \right]
      \right|.
\end{equation}
  Then
  for all $n, k\in\N$, $(t_0,x)\in[0,T)\times\R^d$, $\nu_0\in \{1,\ldots, d+1\}$
  it holds that
  \begin{equation}  \begin{split}\label{eq:estimate.L2error}
   &  \left\| \left( {\bf U}_{n,M,Q}^{0}(t_0,x)-{\bf u}^{\infty}(t_0,x)\right)_{\nu_0}\right\|_{L^2(\P;\R)}
   \\&
   \leq
   \sum_{j=0}^{k-1}
   \sum_{\substack{l_1,\ldots,l_{j+1}\in\N,\\ l_1<\ldots<l_{j+1}=n}}
   \sum_{\substack{t_1,\ldots,t_j,t_{j+1}\in\R,\\t_0<t_1<\ldots<t_j<t_{j+1}\leq T}}
    \sum_{\nu_1,\ldots,\nu_{j+1} \in \{1,\ldots,d+1\}}
   \tfrac{2^j}{\sqrt{M^{n-j-l_1}}}
   \left[\prod_{i=1}^j L_{\nu_i}q^{Q,[t_{i-1},T]}(t_i)\right]
  \\&\quad\cdot
   \Bigg\{
   \1_{\{1\}}(\nu_{j+1})
   \Bigg(
    \1_{\{T\}}(t_{j+1})\Bigg(\left\|\left(\eps(t_j,x+W^0_{t_j}-W^0_{t_0})\right)_{\nu_j}\prod_{i=1}^j
\Big(   
   1,
     \tfrac{ 
     W^{0}_{t_i}- W^{0}_{t_{i-1}}
     }{ {t_{i}-t_{i-1}} } 
    \Big)_{\nu_{i-1}}
    \right\|_{L^2(\P;\R)}
    \\&\qquad \quad
    +\tfrac{1}{\sqrt{M^{l_1}}}
   \left\|\left(g(x+W^0_{T}-W^0_{t_0})-g(x+W^0_{t_j}-W^0_{t_0})\right)
   \prod_{i=1}^{j+1}
     \Big(   
   1,
     \tfrac{ 
     W^{0}_{t_i}- W^{0}_{t_{i-1}}
     }{ {t_{i}-t_{i-1}} } 
     \Big)_{\nu_{i-1}}
   \right\|_{L^2(\P;\R)}
   \Bigg)
   \\&\quad\qquad
   +
   \tfrac{q^{Q,[t_j,T]}(t_{j+1})}{\sqrt{M^{l_1}}}
   \left\|\left(F(0)\right)\!(t_{j+1},x+W_{t_{j+1}}^0-W_{t_0}^0)
   \prod_{i=1}^{j+1}
     \Big(   
   1,
     \tfrac{ 
     W^{0}_{t_i}- W^{0}_{t_{i-1}}
     }{ {t_{i}-t_{i-1}} } 
     \Big)
     _{\nu_{i-1}}
   \right\|_{L^2(\P;\R)}\Bigg)
   \\&\quad\qquad
   +
   \tfrac{L_{\nu_{j+1}}q^{Q,[t_j,T]}(t_{j+1})}{\sqrt{M^{l_1-1}}}
   \left\|\left({\bf u}^{\infty}(t_{j+1},x+W_{t_{j+1}}^0-W_{t_0}^0)\right)_{\nu_{j+1}}
   \prod_{i=1}^{j+1}
     \Big(   
   1,
     \tfrac{ 
     W^{0}_{t_i}- W^{0}_{t_{i-1}}
     }{ {t_{i}-t_{i-1}} } 
     \Big)_{\nu_{i-1}}
   \right\|_{L^2(\P;\R)}
   \Bigg\}
   \\&
   +
   \sum_{\substack{l_1,\ldots,l_{k}\in\N,\\ l_1<\ldots<l_k<n}}
   \;\;
   \sum_{\substack{t_1,\ldots,t_k\in\R,\\t_0<t_1<\ldots<t_k<T}}
   \sum_{\nu_1,\ldots,\nu_k \in \{1,\ldots,d+1\}}
   \tfrac{2^k}{\sqrt{M^{n-k-l_1}}}\left[\prod_{i=1}^{k}L_{\nu_i}q^{Q,[t_{i-1},T]}(t_i)   \right]
  \\&\quad\cdot
   \left\|\left(
   \left({\bf U}_{l_1,M,Q}^{0}-{\bf u}^{\infty}\right)(t_k,x+W_{t_k}^0-W_{t_0}^0)\right)_{\nu_k}
   \prod_{i=1}^{k}
     \Big(   
   1,
     \tfrac{ 
     W^{0}_{t_i}- W^{0}_{t_{i-1}}
     }{ {t_{i}-t_{i-1}} } 
     \Big)_{\nu_{i-1}}
   \right\|_{L^2(\P;\R)}.
  \end{split}     \end{equation}
\end{lemma}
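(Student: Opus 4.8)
The plan is to prove \eqref{eq:estimate.L2error} by induction on $k\in\N$. For the base case $k=1$ I would split ${\bf U}_{n,M,Q}^0(t_0,x)-{\bf u}^{\infty}(t_0,x)$ into the centred part ${\bf U}_{n,M,Q}^0(t_0,x)-\E[{\bf U}_{n,M,Q}^0(t_0,x)]$ and the bias $\E[{\bf U}_{n,M,Q}^0(t_0,x)]-{\bf u}^{\infty}(t_0,x)$. By \eqref{eq:def:U} the centred part equals the $g$-fluctuation $\tfrac{1}{M^n}\sum_{i}(\cdots-\E[\cdots])$ plus, for $l\in\{0,\dots,n-1\}$ and each Gau\ss-Legendre node $t$, the fluctuation $\tfrac{q^{Q,[t_0,T]}(t)}{M^{n-l}}\sum_{i}(Y_{l,t,i}-\E[Y_{l,t,i}])$ of the summands $Y_{l,t,i}=(\funcF({\bf U}_{l,M,Q}^{(0,l,i,t)})-\1_\N(l)\funcF({\bf U}_{l-1,M,Q}^{(0,-l,i,t)}))(t,x+W_t^{(0,l,i)}-W_{t_0}^{(0,l,i)})(1,\tfrac{W_t^{(0,l,i)}-W_{t_0}^{(0,l,i)}}{t-t_0})$, which for fixed $(l,t)$ are i.i.d.\ in $i$ and centred. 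Using $q^{Q,[t_0,T]}(t)\ge 0$, the triangle inequality over $t$ and $l$, and $L^2$-orthogonality of centred independent random variables, each such fluctuation is at most $M^{-(n-l)/2}$ (resp.\ $M^{-n/2}$) times the $L^2(\P;\R)$-norm of a single summand, where subtracting the mean only decreases the norm; I also use that ${\bf U}_{l,M,Q}^{\vartheta}(t,\cdot)$ has the same law as ${\bf U}_{l,M,Q}^0(t,\cdot)$ and is independent of the Brownian increment attached to it. The term $l=0$ gives the $\funcF(0)$-leaf (since ${\bf U}_{0,M,Q}=0$); for $l\ge 1$ I would use $\funcF({\bf U}_l)-\funcF({\bf U}_{l-1})=(\funcF({\bf U}_l)-\funcF({\bf u}^{\infty}))+(\funcF({\bf u}^{\infty})-\funcF({\bf U}_{l-1}))$ (the source of the factor $2$) and the Lipschitz bound \eqref{eq:fLipschitz} to replace each summand by $\sum_{\nu_1}L_{\nu_1}\|({\bf U}_{l,M,Q}^0-{\bf u}^{\infty})(t,x+W_t^0-W_{t_0}^0)_{\nu_1}(1,\tfrac{W_t^0-W_{t_0}^0}{t-t_0})_{\nu_0}\|_{L^2(\P;\R)}$ (and the analogue with $l-1$); the ${\bf U}_0=0$ piece of the $l=1$ term is the ${\bf u}^{\infty}$-leaf, and for $l_1\in\{1,\dots,n-1\}$ the term in ${\bf U}_{l_1,M,Q}^0-{\bf u}^{\infty}$ collects a weight $M^{-(n-l_1)/2}+M^{-(n-l_1-1)/2}\le 2M^{-(n-1-l_1)/2}$ from levels $l_1$ and $l_1+1$. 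For the bias I would combine \eqref{eq:discreteFeynmanKac} with \eqref{eq:BEL}, after which the $g$-terms cancel and the bias becomes $\E[\sum_t q^{Q,[t_0,T]}(t)(\funcF({\bf U}_{n-1,M,Q}^0)-\funcF({\bf u}^{\infty}))(t,\cdots)(1,\cdots)]$ plus the difference between the Gau\ss-Legendre sum and the integral of $\funcF({\bf u}^{\infty})(t,\cdots)(1,\cdots)$; the modulus of the $\nu$-th component of the latter is exactly $(\eps(t_0,x))_{\nu}$ (the $\eps$-leaf), and the former, via \eqref{eq:fLipschitz}, $|\E[\cdot]|\le\E[|\cdot|]$ and $\E[|\cdot|]\le\|\cdot\|_{L^2(\P;\R)}$, adds $\sum_{\nu_1}L_{\nu_1}q^{Q,[t_0,T]}(t)$ to the $l_1=n-1$ weight, which remains $\le 2M^{-(n-1-l_1)/2}$. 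Integrability of all random variables is supplied by Lemma~\ref{l:approximations.integrable}. Relabelling the single level index as $l_1=n$, the collected terms are exactly the $j=0$ summand and the remainder of \eqref{eq:estimate.L2error} for $k=1$.

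For the induction step $k\to k+1$ I would take the remainder in \eqref{eq:estimate.L2error} and, for fixed $l_1<\dots<l_k<n$, $t_0<\dots<t_k<T$ and $\nu_1,\dots,\nu_k$, rewrite its typical factor $\|({\bf U}_{l_1,M,Q}^0-{\bf u}^{\infty})(t_k,x+W_{t_k}^0-W_{t_0}^0)_{\nu_k}H\|_{L^2(\P;\R)}$ with $H:=\prod_{i=1}^k(1,\tfrac{W_{t_i}^0-W_{t_{i-1}}^0}{t_i-t_{i-1}})_{\nu_{i-1}}$. Since ${\bf U}_{l_1,M,Q}^0$ is built only from Brownian motions whose index tuple begins with $0$, while $W^0=W^{(0)}$ is a separate one, ${\bf U}_{l_1,M,Q}^0(t_k,\cdot)$ is independent of $\sigma(W^0)$; hence, writing $Z:=W_{t_k}^0-W_{t_0}^0$ and $\psi(z):=\|({\bf U}_{l_1,M,Q}^0(t_k,x+z)-{\bf u}^{\infty}(t_k,x+z))_{\nu_k}\|_{L^2(\P;\R)}$, conditioning on $\sigma(W^0|_{[t_0,t_k]})$ gives $\|({\bf U}_{l_1,M,Q}^0-{\bf u}^{\infty})(t_k,x+Z)_{\nu_k}H\|_{L^2(\P;\R)}=\|\psi(Z)H\|_{L^2(\P;\R)}$. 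I would then apply the already proven case $k=1$ of \eqref{eq:estimate.L2error}, with $(n,t_0,x,\nu_0)$ replaced by $(l_1,t_k,x+z,\nu_k)$, to bound $\psi(z)$ for every $z$, realising the Brownian motion in that bound as the post-$t_k$ increments of the same $W^0$ (independent of $\sigma(W^0|_{[t_0,t_k]})$ and of all the ${\bf U}$-fields). Each leaf term produced is a deterministic function $c\,\rho(z)$ of $z$ with $\rho(z)$ an $L^2(\P;\R)$-norm over those post-$t_k$ increments, and conditioning once more shows that $\|c\,\rho(Z)H\|_{L^2(\P;\R)}$ equals $c$ times the $L^2(\P;\R)$-norm of the same integrand, now evaluated at $x+W_{t_{k+1}}^0-W_{t_0}^0$ and multiplied by $\prod_{i=1}^{k+1}(1,\tfrac{W_{t_i}^0-W_{t_{i-1}}^0}{t_i-t_{i-1}})_{\nu_{i-1}}$, where $t_{k+1}$ is the newly introduced time (equal to $T$ for the $g$- and $\eps$-leaves); this is exactly a $j=k$ summand of \eqref{eq:estimate.L2error}, and the $k=1$ remainder, handled the same way, becomes the $k+1$ remainder. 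The prefactors combine as $2^k\cdot 2=2^{k+1}$, $[\prod_{i=1}^kL_{\nu_i}q^{Q,[t_{i-1},T]}(t_i)]\,L_{\nu_{k+1}}q^{Q,[t_k,T]}(t_{k+1})=\prod_{i=1}^{k+1}L_{\nu_i}q^{Q,[t_{i-1},T]}(t_i)$, and $M^{-(n-k-l_1)/2}M^{-(l_1-1-\ell)/2}=M^{-(n-(k+1)-\ell)/2}$ with $\ell<l_1$ the new innermost level; hence $l_1<\dots<l_k<n$ turns into $l_1<\dots<l_k<l_{k+1}=n$ for the new $j=k$ summands and into $\ell<l_1<\dots<l_k<n$ for the new remainder, which is \eqref{eq:estimate.L2error} with $k+1$ in place of $k$.

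The step I expect to be the main obstacle is the second one, in particular the rigorous justification of these ``shift-and-condition'' identities: one must check that a mean-square-bounded random field evaluated at an independent random shift and multiplied by an independent random weight can be moved in and out of $\|\cdot\|_{L^2(\P;\R)}$ as claimed, and that the Brownian increments occurring in the base case can be realised consistently as the increments of one Brownian motion $W^0$ over the successive intervals $[t_0,t_1],\dots,[t_j,t_{j+1}]$ so that the products $\prod_{i=1}^{j+1}(1,\tfrac{W_{t_i}^0-W_{t_{i-1}}^0}{t_i-t_{i-1}})_{\nu_{i-1}}$ build up correctly; the integrability needed for these conditioning arguments comes from Lemma~\ref{l:approximations.integrable}. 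The remaining work is the substantial but routine bookkeeping of the summation ranges, the powers of $2$ and of $M$, and the verification that the edge cases $l=0$ (the $\funcF(0)$-leaf), $l=1$ (the ${\bf u}^{\infty}$-leaf) and $l_1=n-1$ (which absorbs the bias correction, and which for $n=1$ itself becomes the ${\bf u}^{\infty}$-leaf) reproduce precisely the terms displayed in \eqref{eq:estimate.L2error}.
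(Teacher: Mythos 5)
Your proposal follows essentially the same route as the paper's proof: the base case $k=1$ is exactly the paper's equation \eqref{eq:global.estimate}, obtained by the same bias/variance split (with the variance bounded by $L^2$-orthogonality of the independent centred summands, the level-$0$ term yielding the $\funcF(0)$-leaf, the Lipschitz bound combined with $\funcF({\bf U}_l)-\funcF({\bf U}_{l-1})=(\funcF({\bf U}_l)-\funcF({\bf u}^{\infty}))+(\funcF({\bf u}^{\infty})-\funcF({\bf U}_{l-1}))$ producing the factor $2$, and the bias handled via \eqref{eq:discreteFeynmanKac} and \eqref{eq:BEL} producing the $\eps$-leaf), and the induction step uses the identical freezing/conditioning trick exploiting that ${\bf U}_{l_1,M,Q}^{0}(t_k,\cdot)$ is independent of $\sigma(W^0)$, exactly as the paper's displayed chain of equalities following the statement of the induction hypothesis. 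The bookkeeping you flag as routine (the $2^k\cdot 2=2^{k+1}$, the telescoping powers of $M$, the relabelling $\ell<l_1<\dots<l_k<n$, the edge levels $l=0$, $l=1$, $l_1=n-1$) all checks out and matches the paper.
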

\begin{proof}[Proof of Lemma~\ref{l:estimate.L2error}]
We note that \eqref{eq:ldeltaintegrand} and \eqref{eq:fLipschitz} ensure that the function $\eps$ is well-defined.
 First, we analyze the \emph{Monte Carlo error}.
%Item~\eqref{item:approximations.integrable.i} of Lemma~\ref{l:approximations.integrable} implies that the variances in \eqref{eq:var_approx1} are well-defined 
% in $[0,\infty]$.
 Independence, Items~\eqref{item:approximations.integrable.i} and \eqref{item:approximations.integrableiia} of Lemma~\ref{l:approximations.integrable}, and~\eqref{eq:def:U} imply that
 for all  $m\in\N$, $x\in\R^d$, $s\in[0,T)$, $\nu \in \{1,\ldots,d+1\}$ it holds that
  \begin{equation} \label{eq:var_approx1} \begin{split}
    &\Var\!\left(
       \left({\bf U}_{m,M,Q}^{0}(s,x)\right)_{\nu}
     \right)
     =\tfrac{1}{M^m}
     %\left[
     \Var\!\left(\left(g(x+W_T^{0}-W_s^0)-g(x)\right)
     \Big(
     1 ,
     \tfrac{ 
     W^{0}_{T}- W^{0}_{s}
     }{ T - s }
     \Big)_{\nu}
    \right)
     %\right]
     \\&+\sum_{l=0}^{m-1}\tfrac{1}{M^{m-l}}
     \Var\!\left(\sum_{t\in(s,T)} q^{Q,[s,T]}(t)\left(\funcF( {\bf U}_{l,M,Q}^{(0,l,1,t)})-\1_{\N}(l)\funcF( {\bf U}_{l-1,M,Q}^{(0,-l,1,t)})\right)\!
     (t,x+W_t^0-W_s^0)\,
     \Big(
     1 ,
     \tfrac{ 
     W^{0}_{t}- W^{0}_{s}
     }{ {t - s} }
     \Big)_{\nu}
     \right)
   \\&
   \leq\tfrac{1}{M^m}
     \E\!\left[\left|
     \left(g(x+W_T^{0}-W_s^0)-g(x)\right)
     \Big(
     1 ,
     \tfrac{ 
     W^{0}_{T}- W^{0}_{s}
     }{ T - s }
     \Big)_{\nu}
    \right|^2\right]
     \\&+\sum_{l=0}^{m-1}\tfrac{1}{M^{m-l}}
     \E\!\left[\left|\sum_{t\in(s,T)} q^{Q,[s,T]}(t)\left(\funcF( {\bf U}_{l,M,Q}^{(0,l,1,t)})-\1_{\N}(l)\funcF( {\bf U}_{l-1,M,Q}^{(0,-l,1,t)})\right)\!
     (t,x+W_t^0-W_s^0)\,
     \Big(
     1 ,
     \tfrac{ 
     W^{0}_{t}- W^{0}_{s}
     }{ {t - s} }
     \Big)_{\nu}
     \right|^2\right].
  \end{split}     \end{equation}
 Combining this, the triangle inequality, and \eqref{eq:fLipschitz} 
 yields that
  for all 
  $m\in\N$, $x\in\R^d$, $s\in[0,T)$, $\nu \in \{1,\ldots,d+1\}$
  it holds that
  \begin{equation}  \begin{split}
   &     \left\| \left({\bf U}_{m,M,Q}^{0}(s,x)-\E\!\left[{\bf U}_{m,M,Q}^{0}(s,x)\right]\right)_{\nu}\right\|_{L^2(\P;\R)}
   =
   \left(
      \Var\!\left(
      \left(
       {\bf U}_{m,M,Q}^{0}(s,x)
       \right)_{\nu}
     \right)
     \right)^{\nicefrac{1}{2}}
 \\&
    \leq
     \tfrac{1}{\sqrt{M^m}}
     \left\|
     \left(g(x+W_T^{0}-W_s^0)-g(x)\right)
     \Big(
     1 ,
     \tfrac{ 
     W^{0}_{T}- W^{0}_{s}
     }{ T - s }
     \Big)_{\nu}
     \right\|_{L^2(\P;\R)}
    \\&
     +
    \sum_{l=0}^{m-1}
    \left[
     \sum_{t\in(s,T)} \tfrac{q^{Q,[s,T]}(t)}{\sqrt{M^{m-l}}}
    \left\|
     \left(\funcF( {\bf U}_{l,M,Q}^{(0,l,1,t)})-\1_{\N}(l)\funcF( {\bf U}_{l-1,M,Q}^{(0,-l,1,t)})\right)\!
     (t,x+W_t^0-W_s^0)
     \Big(
     1 ,
     \tfrac{ 
     W^{0}_{t}- W^{0}_{s}
     }{ {t - s} }
     \Big)_{\nu}
    \right\|_{L^2(\P;\R)}
    \right]
 \\&
    \leq
     \tfrac{1}{\sqrt{M^m}}
     \left\|
     \left(g(x+W_T^{0}-W_s^0)-g(x)\right)
     \Big(
     1 ,
     \tfrac{ 
     W^{0}_{T}- W^{0}_{s}
     }{ T - s }
     \Big)_{\nu}
     \right\|_{L^2(\P;\R)}
  \\&
    +
     \tfrac{1}{\sqrt{M^m}}
     \sum_{t\in(s,T)} q^{Q,[s,T]}(t)
    \left\|
     \left(\funcF( 0)\right)\!
     (t,x+W_t^0-W_s^0)
     \Big(
     1 ,
     \tfrac{ 
     W^{0}_{t}- W^{0}_{s}
     }{ {t - s} }
     \Big)_{\nu}
    \right\|_{L^2(\P;\R)}
    \\&
     +
    \sum_{l=1}^{m-1}
    \left[
     \sum_{t\in(s,T)} \tfrac{q^{Q,[s,T]}(t)}{\sqrt{M^{m-l}}}
    \left\|
    \sum_{\nu_1=1}^{d+1}L_{\nu_1}
     \left|
	\left(     
     \left( {\bf U}_{l,M,Q}^{(0,l,1,t)}-{\bf U}_{l-1,M,Q}^{(0,-l,1,t)}\right)\!
     (t,x+W_t^0-W_s^0)
   \right)_{\nu_1}  
     \right|
     \left|
     \Big(
     1 ,
     \tfrac{ 
     W^{0}_{t}- W^{0}_{s}
     }{ {t - s} }
     \Big)_{\nu}
     \right|
    \right\|_{L^2(\P;\R)}
    \right].
    \end{split}
    \end{equation}
    This and the triangle inequality ensure that
  for all 
  $m\in\N$, $x\in\R^d$, $s\in[0,T)$, $\nu \in \{1,\ldots,d+1\}$
  it holds that
    \begin{equation}\label{eq:final.MCerror}
    \begin{split}
     &     \left\| \left({\bf U}_{m,M,Q}^{0}(s,x)-\E\!\left[{\bf U}_{m,M,Q}^{0}(s,x)\right]\right)_{\nu}\right\|_{L^2(\P;\R)}\\
    &
    \leq
     \tfrac{1}{\sqrt{M^m}}
     \left\|
     \left(g(x+W_T^{0}-W_s^0)-g(x)\right)
     \Big(
     1 ,
     \tfrac{ 
     W^{0}_{T}- W^{0}_{s}
     }{ T - s }
     \Big)_{\nu}
     \right\|_{L^2(\P;\R)}
  \\&
    +
     \tfrac{1}{\sqrt{M^m}}
     \sum_{t\in(s,T)} q^{Q,[s,T]}(t)
    \left\|
     \left(\funcF( 0)\right)\!
     (t,x+W_t^0-W_s^0)
     \Big(
     1 ,
     \tfrac{ 
     W^{0}_{t}- W^{0}_{s}
     }{ {t - s} }
     \Big)_{\nu}
    \right\|_{L^2(\P;\R)}
    \\&
     +
    \sum_{l=1}^{m-1}
     \sum_{t\in(s,T)} 
     \sum_{\nu_1=1}^{d+1}
     \tfrac{L_{\nu_1}q^{Q,[s,T]}(t)}{\sqrt{M^{m-l}}}
     \left\|\left(\left( {\bf U}_{l,M,Q}^{0}-{\bf u}^{\infty}\right)\!
     (t,x+W_t^0-W_s^0)\right)_{\nu_1}
     \Big(
     1 ,
     \tfrac{ 
     W^{0}_{t}- W^{0}_{s}
     }{ {t - s} }
     \Big)_{\nu}
    \right\|_{L^2(\P;\R)}
      \\&
     +
    \sum_{l=1}^{m-1}
     \sum_{t\in(s,T)}  
     \sum_{\nu_1=1}^{d+1}
     \tfrac{L_{\nu_1}q^{Q,[s,T]}(t)}{\sqrt{M^{m-l}}}
    \left\|
     \left(\left( {\bf U}_{l-1,M,Q}^{0}-{\bf u}^{\infty}\right)\!
     (t,x+W_t^0-W_s^0)\right)_{\nu_1}
     \Big(
     1 ,
     \tfrac{ 
     W^{0}_{t}- W^{0}_{s}
     }{ {t - s} }
     \Big)_{\nu}
    \right\|_{L^2(\P;\R)}
    \\&
    =
     \tfrac{1}{\sqrt{M^m}}
     \left\|
     \left(g(x+W_T^{0}-W_s^0)-g(x)\right)
     \Big(
     1 ,
     \tfrac{ 
     W^{0}_{T}- W^{0}_{s}
     }{ T - s }
     \Big)_{\nu}
     \right\|_{L^2(\P;\R)}
  \\&
    +
     \tfrac{1}{\sqrt{M^m}}
     \sum_{t\in(s,T)} q^{Q,[s,T]}(t)
    \left\|
     \left(\funcF( 0)\right)\!
     (t,x+W_t^0-W_s^0)
     \Big(
     1 ,
     \tfrac{ 
     W^{0}_{t}- W^{0}_{s}
     }{ {t - s} }
     \Big)_{\nu}
    \right\|_{L^2(\P;\R)} 
    \\&
     +
    \sum_{l=0}^{m-1}
    \sum_{t\in(s,T)} 
    \sum_{\nu_1=1}^{d+1} \tfrac{ L_{\nu_1}q^{Q,[s,T]}(t)}{\sqrt{M^{m-l-1}}}
    \left(
    \tfrac{\1_{(0,m)}(l)}{\sqrt{M}}+\1_{(-1,m-1)}(l)
    \right)
    \left\|
     \left(\left( {\bf U}_{l,M,Q}^{0}-{\bf u}^{\infty}\right)\!
     (t,x+W_t^0-W_s^0)\right)_{\nu_1}
     \Big(
     1 ,
     \tfrac{ 
     W^{0}_{t}- W^{0}_{s}
     }{ {t - s} }
     \Big)_{\nu}
    \right\|_{L^2(\P;\R)}
    .
  \end{split}     \end{equation}
  Next we analyze the \emph{time discretization error}.
Item~\eqref{item:approximations.integrable.ii} of Lemma~\ref{l:approximations.integrable} ensures that
  for all  $m\in\N$, $s\in[0,T)$, $x\in\R^d$
   it holds that
  \begin{equation}  \begin{split}\label{eq:discreteFeynmanKac2}
    &\E\!\left[{\bf U}_{m,M,Q}^{0}(s,x)
    -g(x+W_T^0-W_s^0)
     \Big(
     1 ,
     \tfrac{ 
     W^{0}_{T}- W^{0}_{s}
     }{ T - s }
     \Big)
    \right]
    \\&=
    \E\!\left[
    \sum_{t\in(s,T)}q^{Q,[s,T]}(t)
    \left(\funcF( {\bf U}_{m-1,M,Q}^{0})\right)\!(t,x+W_t^0-W_s^0)
     \Big(
     1 ,
     \tfrac{ 
     W^{0}_{t}- W^{0}_{s}
     }{ {t - s} }
     \Big)
    \right].
  \end{split}     \end{equation}
  Item \eqref{item:approximations.integrable.iiiFC} of Lemma \ref{l:nonlinear.FK.formula} proves that 
  for all $s\in[0,T)$, $x\in\R^d$ it holds
  that
  \begin{equation}  \begin{split}\label{eq:BEL2}
      {\bf u}^{\infty}(s,x)-\E\!\left[g(x+W_T^0-W_s^0)
     \Big(
     1 ,
     \tfrac{ 
     W^{0}_{T}- W^{0}_{s}
     }{ T - s }
     \Big)
      \right]
      &=\E\!\left[
      \int_s^{T}(\funcF({\bf u}^{\infty}))(t,x+W_t^0-W_s^0)
     \Big(
     1 ,
     \tfrac{ 
     W^{0}_{t}- W^{0}_{s}
     }{ t - s }
     \Big)
      \,dt
      \right].
    \end{split}     \end{equation}
This,~\eqref{eq:discreteFeynmanKac2}, 
the triangle inequality,
%the fact that  
%$({\bf U}_{m,M,Q}^{0})_{m\in \N}$ and $W^0$ are independent,
and Jensen's inequality
show
  for all  $m\in\N$, $s\in[0,T)$, $x\in\R^d$, $\nu \in \{1,\ldots,d+1\}$
  that
  \begin{equation}  \begin{split}\label{eq:final.Timeerror}
    &\left|\left(\E\!\left[{\bf U}_{m,M,Q}^{0}(s,x)\right]
            -{\bf u}^{\infty}(s,x)\right)_{\nu}\right|
  \\&=
    \Bigg|
    \E\!\left[
    \sum_{t\in(s,T)}q^{Q,[s,T]}(t)
    \left(\funcF( {\bf U}_{m-1,M,Q}^{0})\right)\!(t,x+W_t^0-W_s^0)
     \Big(
     1 ,
     \tfrac{ 
     W^{0}_{t}- W^{0}_{s}
     }{ {t - s} }
     \Big)_{\nu}
     \right]
     \\&\qquad
      -
      \E\!\left[
      \smallint_s^{T}
      (\funcF({\bf u}^{\infty}))(t,x+W_t^0-W_s^0)
     \Big(
     1 ,
     \tfrac{ 
     W^{0}_{t}- W^{0}_{s}
     }{ t - s }
     \Big)_{\nu}
      \,dt
    \right]
      \Bigg|
  \\&\leq
    \left|
    \E\!\left[
    \sum_{t\in(s,T)}q^{Q,[s,T]}(t)
    \left(\funcF( {\bf U}_{m-1,M,Q}^{0})-\funcF({\bf u}^{\infty})\right)\!(t,x+W_t^0-W_s^0)
     \Big(
     1 ,
     \tfrac{ 
     W^{0}_{t}- W^{0}_{s}
     }{ {t - s} }
     \Big)_{\nu}
     \right]
     \right|
     \\&\qquad
      +
      \left|
      \E\!\left[
    \sum_{t\in(s,T)}q^{Q,[s,T]}(t)
    \left(\funcF({\bf u}^{\infty})\right)\!(t,x+W_{t-s}^0)
     \Big(
     1 ,
     \tfrac{ 
     W^{0}_{t-s}
     }{ {t - s} }
     \Big)_{\nu}
     -
      \smallint_s^{T}
      (\funcF({\bf u}^{\infty}))(t,x+W_{t-s}^0)
     \Big(
     1 ,
     \tfrac{ 
     W^{0}_{t-s}
     }{ t - s }
     \Big)_{\nu}
      \,dt
    \right]
      \right|
  \\&=
    \left|
    \E\!\left[
    \sum_{t\in(s,T)}q^{Q,[s,T]}(t)
    \left(\funcF( {\bf U}_{m-1,M,Q}^{0})-\funcF({\bf u}^{\infty})\right)\!(t,x+W_t^0-W_s^0)
     \Big(
     1 ,
     \tfrac{ 
     W^{0}_{t}- W^{0}_{s}
     }{ {t - s} }
     \Big)_{\nu}
     \right]
     \right|
     +\left(\eps(s,x)\right)_{\nu}
 \\&\leq 
    \E\!\left[
    \sum_{t\in(s,T)}q^{Q,[s,T]}(t)
    \left[
    \sum_{\nu_1=1}^{d+1}L_{\nu_1}
    \left| 
   \left( 
    \left({\bf U}_{m-1,M,Q}^{0}-{\bf u}^{\infty}\right)\!(t,x+W_t^0-W_s^0)
   \right)_{\nu_1} 
    \right|
    \right]
     \left|\Big(
     1 ,
     \tfrac{ 
     W^{0}_{t}- W^{0}_{s}
     }{ {t - s} }
     \Big)_{\nu}\right|
     \right]
     +\left(\eps(s,x)\right)_{\nu}
     \\&\leq 
    \sum_{t\in(s,T)}\sum_{\nu_1=1}^{d+1}
    L_{\nu_1}q^{Q,[s,T]}(t)
    \left\|
    \left( \left({\bf U}_{m-1,M,Q}^{0}-{\bf u}^{\infty}\right)\!(t,x+W_t^0-W_s^0)\right)_{\nu_1}
    \Big(
     1 ,
     \tfrac{ 
     W^{0}_{t}- W^{0}_{s}
     }{ {t - s} }
     \Big)_{\nu}
     \right\|_{L^2(\P;\R)}
     +\left(\eps(s,x)\right)_{\nu}.
  \end{split}     \end{equation}
  In the next step we combine the established bounds for the Monte Carlo error and for the time discretization error
  to obtain a bound for the
  \emph{global error}. More formally, observe that 
  \eqref{eq:final.MCerror} and \eqref{eq:final.Timeerror} ensure that 
  for all  $m\in\N$, $s\in[0,T)$, $x\in\R^d$, $\nu \in \{1,\ldots, d+1\}$
  it holds that
  \begin{equation}  \begin{split}\label{eq:global.estimate}
   &     \left\| \left({\bf U}_{m,M,Q}^{0}(s,x)-{\bf u}^{\infty}(s,x)\right)_{\nu}\right\|_{L^2(\P;\R)}\\
   & \leq \left\| \left({\bf U}_{m,M,Q}^{0}(s,x)-\E\!\left[{\bf U}_{m,M,Q}^{0}(s,x)\right]\right)_{\nu}\right\|_{L^2(\P;\R)} 
   +\left|\left(\E\!\left[{\bf U}_{m,M,Q}^{0}(s,x)\right]
            -{\bf u}^{\infty}(s,x)\right)_{\nu}\right|
\\&
    \leq
     \tfrac{1}{\sqrt{M^m}}
     \left\|
     \left(g(x+W_T^{0}-W_s^0)-g(x)\right)
     \Big(
     1 ,
     \tfrac{ 
     W^{0}_{T}- W^{0}_{s}
     }{ T - s }
     \Big)_{\nu}
     \right\|_{L^2(\P;\R)}
  \\&
    +
     \tfrac{1}{\sqrt{M^m}}
     \sum_{t\in(s,T)} q^{Q,[s,T]}(t)
    \left\|
     \left(\funcF( 0)\right)\!
     (t,x+W_t^0-W_s^0)
     \Big(
     1 ,
     \tfrac{ 
     W^{0}_{t}- W^{0}_{s}
     }{ {t - s} }
     \Big)_{\nu}
    \right\|_{L^2(\P;\R)} +\left(\eps(s,x)\right)_{\nu}
    \\&
    + \sum_{t\in(s,T)}\sum_{\nu_1=1}^{d+1}
    L_{\nu_1}q^{Q,[s,T]}(t)
    \left\|
    \left( \left({\bf U}_{m-1,M,Q}^{0}-{\bf u}^{\infty}\right)\!(t,x+W_t^0-W_s^0)\right)_{\nu_1}
    \Big(
     1 ,
     \tfrac{ 
     W^{0}_{t}- W^{0}_{s}
     }{ {t - s} }
     \Big)_{\nu}
     \right\|_{L^2(\P;\R)}
     \\&
     +\sum_{l=0}^{m-1}
    \sum_{t\in(s,T)} 
    \sum_{\nu_1=1}^{d+1} \tfrac{ L_{\nu_1}q^{Q,[s,T]}(t)}{\sqrt{M^{m-l-1}}}
    \left(
    \tfrac{\1_{(0,m)}(l)}{\sqrt{M}}+\1_{(-1,m-1)}(l)
    \right)
    \left\|
     \left(\left( {\bf U}_{l,M,Q}^{0}-{\bf u}^{\infty}\right)\!
     (t,x+W_t^0-W_s^0)\right)_{\nu_1}
     \Big(
     1 ,
     \tfrac{ 
     W^{0}_{t}- W^{0}_{s}
     }{ {t - s} }
     \Big)_{\nu}
    \right\|_{L^2(\P;\R)}   
   \\
  &\leq
  \left(\eps(s,x)\right)_{\nu}+
     \tfrac{1}{\sqrt{M^m}}
     \left\|
     \left(g(x+W_T^{0}-W_s^0)-g(x)\right)
     \Big(
     1 ,
     \tfrac{ 
     W^{0}_{T}- W^{0}_{s}
     }{ T - s }
     \Big)_{\nu}
     \right\|_{L^2(\P;\R)}
  \\&
    +
     \tfrac{1}{\sqrt{M^m}}
     \sum_{t\in(s,T)} q^{Q,[s,T]}(t)
    \left\|
     \left(\funcF( 0)\right)\!
     (t,x+W_t^0-W_s^0)
     \Big(
     1 ,
     \tfrac{ 
     W^{0}_{t}- W^{0}_{s}
     }{ {t - s} }
     \Big)_{\nu}
    \right\|_{L^2(\P;\R)} 
    \\&
    +
     \sum_{t\in(s,T)}  \sum_{\nu_1=1}^{d+1} 
     \tfrac{L_{\nu_1} q^{Q,[s,T]}(t)}{\sqrt{M^{m-1}}}
    \left\|
     \left({\bf u}^{\infty}
     (t,x+W_t^0-W_s^0)\right)_{\nu_1}
     \Big(
     1 ,
     \tfrac{ 
     W^{0}_{t}- W^{0}_{s}
     }{ {t - s} }
     \Big)_{\nu}
    \right\|_{L^2(\P;\R)}
   \\&
    +\sum_{l=1}^{m-1}
    \sum_{t\in(s,T)} 
    \sum_{\nu_1=1}^{d+1} \tfrac{2 L_{\nu_1}q^{Q,[s,T]}(t)}{\sqrt{M^{m-l-1}}}
    \left\|
     \left(\left( {\bf U}_{l,M,Q}^{0}-{\bf u}^{\infty}\right)\!
     (t,x+W_t^0-W_s^0)\right)_{\nu_1}
     \Big(
     1 ,
     \tfrac{ 
     W^{0}_{t}- W^{0}_{s}
     }{ {t - s} }
     \Big)_{\nu}
    \right\|_{L^2(\P;\R)} .
  \end{split}     \end{equation}
  We prove~\eqref{eq:estimate.L2error} by induction on $k\in\N$.
  The base case $k=1$ follows immediately from~\eqref{eq:global.estimate}.
  For the induction step $\N\ni k\mapsto k+1\in\N$ let $k\in\N$ and assume that~\eqref{eq:estimate.L2error}
  holds for $k$.
  Inequality~\eqref{eq:global.estimate}
  and independence of $({\bf U}^0_{m,M,Q})_{m\in\N_0}$ and $W^0$
  yield that
  for all  $m\in\N$, $t_1,\ldots,t_{k}\in\R$, $x\in\R^d$, $\nu_0,\ldots, \nu_{k} \in \{1,\ldots, d+1\}$ with $ t_0<t_1<\ldots<t_k<T$
  it holds that
  \begin{equation}  \begin{split}
   &\left\|\left(\left({\bf U}_{{l_{1}},M,Q}^{0}-{\bf u}^{\infty}\right)(t_k,x+W_{t_k}^0-W_{t_0}^0)\right)_{\nu_k}\prod_{i=1}^{k}
   \Big(   
   1,
     \tfrac{ 
     W^{0}_{t_i}- W^{0}_{t_{i-1}}
     }{ {t_{i}-t_{i-1}} } 
     \Big)_{\nu_{i-1}}
   \right\|_{L^2(\P;\R)}
   \\&
   =
   \left(
   \E\!\left[
   \left(
   \left\|\left({\bf U}_{{l_{1}},M,Q}^{0}-{\bf u}^{\infty}\right)(t_k,z)
   \right)_{\nu_k}
   \right\|_{L^2(\P;\R)}^2\Big|_{z=x+W_{t_k}^0-W^0_{t_0}}
   \prod_{i=1}^{k}
     \Big(\Big(   
   1,
     \tfrac{ 
     W^{0}_{t_i}- W^{0}_{t_{i-1}}
     }{ {t_{i}-t_{i-1}} } 
     \Big)_{\nu_{i-1}}\Big)
     ^2
   \right]
   \right)^{\frac{1}{2}}
   \\&
   \leq
   \left\|
   \left(  
     \eps(t_k,x+W_{t_k}^0-W^0_{t_0})
     \right)_{\nu_k}
   \prod_{i=1}^{k}
     \Big(   
   1,
     \tfrac{ 
     W^{0}_{t_i}- W^{0}_{t_{i-1}}
     }{ {t_{i}-t_{i-1}} } 
     \Big)
     _{\nu_{i-1}}
   \right\|_{L^2(\P;\R)}
   \\&\quad 
   +\tfrac{1}{\sqrt{M^{l_{1}}}}
   \left\|\left(g(x+W^0_{T}-W^0_{t_0})-g(x+W^0_{t_k}-W^0_{t_0})\right)
     \Big(   
   1 ,
     \tfrac{ 
     W^{0}_{T}- W^{0}_{t_{k}}
     }{ T-t_k } 
     \Big)
     _{\nu_{k}}
   \prod_{i=1}^{k}
     \Big(   
   1,
     \tfrac{ 
     W^{0}_{t_i}- W^{0}_{t_{i-1}}
     }{ {t_{i}-t_{i-1}} } 
     \Big)
     _{\nu_{i-1}}
   \right\|_{L^2(\P;\R)}
   \\&\quad
   +
   \tfrac{1}{\sqrt{M^{l_{1}}}}
   \sum_{t_{k+1}\in(t_k,T)}q^{Q,[t_k,T]}(t_{k+1})
   \left\|\left(\funcF(0)\right)(t_{k+1},x+W_{t_{k+1}}^0-W_{t_0}^0)
   \prod_{i=1}^{k+1}
     \Big(   
   1,
     \tfrac{ 
     W^{0}_{t_i}- W^{0}_{t_{i-1}}
     }{ {t_{i}-t_{i-1}} } 
     \Big)
     _{\nu_{i-1}}
   \right\|_{L^2(\P;\R)}
   \\&\quad
   +
   \sum_{t_{k+1}\in(t_k,T)}
   \sum_{\nu_{k+1}=1}^{d+1}
   \tfrac{L_{\nu_{k+1}}q^{Q,[t_k,T]}(t_{k+1})}{\sqrt{M^{{l_{1}}-1}}}
   \left\|
  \left( 
   {\bf u}^{\infty}(t_{k+1},x+W_{t_{k+1}}^0-W_{t_0}^0)
   \right)_{\nu_{k+1}}
   \prod_{i=1}^{k+1}
     \Big(   
   1,
     \tfrac{ 
     W^{0}_{t_i}- W^{0}_{t_{i-1}}
     }{ {t_{i}-t_{i-1}} } 
     \Big)
     _{\nu_{i-1}}
   \right\|_{L^2(\P;\R)}
   \\&\quad 
   +
   \sum_{{l_{0}}=1}^{m-1}
   \;\;
   \sum_{t_{k+1}\in(t_k,T)}
   \sum_{\nu_{k+1}=1}^{d+1}
   \tfrac{2L_{\nu_{k+1}}q^{Q,[t_{k},T]}(t_{k+1})}{\sqrt{M^{{l_{1}}-1-{l_{0}}}}}
  \\&\quad\quad\cdot
   \left\|
   \left(
   \left({\bf U}_{{l_{0}},M,Q}^0-{\bf u}^{\infty}\right)(t_{k+1},x+W_{t_{k+1}}^0-W_{t_0}^0)
   \right)_{\nu_{k+1}}
   \prod_{i=1}^{k+1}
     \Big(   
   1,
     \tfrac{ 
     W^{0}_{t_i}- W^{0}_{t_{i-1}}
     }{ {t_{i}-t_{i-1}} } 
     \Big)
     _{\nu_{i-1}}
   \right\|_{L^2(\P;\R)}.  
  \end{split}     \end{equation}
This and the induction hypothesis
complete the induction step $\N \ni k \rightarrow k+1 \in \N$. Induction hence 
establishes~\eqref{eq:estimate.L2error}.
  This finishes the proof of Lemma \ref{l:estimate.L2error}.
\end{proof}
\begin{theorem}[Global approximation error]\label{thm:rate}
Assume the setting in Section~\ref{sec:setting.full.discretization},
let $p,n,Q\in \N$, $M\in \N\cap [2,\infty)$, $\nu_0\in \{1,\ldots,d+1\}$, $(t_0,x)\in [0,T)\times \R^d$, 
assume that
\begin{equation}  \begin{split}\label{eq:ldeltaintegrand2}
\sup_{(t,z)\in [0,T] \times \R^d} \frac{\|{\bf u}^\infty (t,z)\|_1}{1+\|z\|_1^p}+
\sup_{(t,z)\in [0,T] \times \R^d} \frac{|\funcF(0)(t,z)|}{1+\|z\|_1^p}<\infty,
\end{split}     \end{equation}
let $C\in [0,\infty)$ be the real number given by
\begin{equation}\label{eq:defC1}
C=2(\sqrt{T-t_0}+1)\sqrt{(T-t_0)\pi}\left(\|L\|_1+1\right)+1,
\end{equation}
and let $\eps \colon [0,T] \times \R^d \to [0,\infty]^{d+1}$ be the function that satisfies for all $s\in [0,T]$, $y\in \R^d$, $\nu \in \{1,\ldots, d+1\}$ that
\begin{equation}
\left(\eps(s,y)\right)_{\nu}=
\left|
      \E\!\left[
    \sum_{t\in(s,T)}q^{Q,[s,T]}(t)
    \left(\funcF({\bf u}^{\infty}\right)\!(t,y+W_{t-s}^0)
     \Big(
     1 ,
     \tfrac{ 
     W^{0}_{t-s}
          }{ {t - s} }
     \Big)_{\nu}
     -
      \smallint_s^{T}
      (\funcF({\bf u}^{\infty}))(t,y+W_{t-s}^0)
     \Big(
     1 ,
     \tfrac{ 
     W^{0}_{t-s}
     }{ t - s }
     \Big)_{\nu}
      \,dt
    \right]
      \right|.
\end{equation}
 Then it holds that
 \begin{equation} \begin{split}
    & \left\| \left( {\bf U}_{n,M,Q}^{0}(t_0,x)-{\bf u}^{\infty}(t_0,x)\right)_{\nu_0} \right\|_{L^2(\P;\R)}
   \\&
   \leq
    \tfrac{7C^n2^{n-1}e^M}{\sqrt{M^{n-3}}}
    \left(
    \left[\sup_{(t,z)\in[t_0,T]\times \R^d}|(F(0))(t,z)|\right]
   + \left[\sup_{(t,z)\in [t_0,T]\times \R^d}\left\|{\bf u}^{\infty}(t,z)\right\|_\infty\right]  
    +\max\{\sqrt{T-t_0},\sqrt{3}\}\|K\|_1
    \right)
    \\&
     +(14 (4C)^{n-1}+1)\left[\sup_{(t,z)\in [t_0,T]\times \R^d
   }\|\eps(t,z)\|_\infty\right]
   .
  \end{split}     \end{equation}
\end{theorem}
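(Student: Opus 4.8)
The plan is to iterate the recursive estimate \eqref{eq:estimate.L2error} of Lemma~\ref{l:estimate.L2error} with the choice $k=n$. With $k=n$ the final (remainder) sum in \eqref{eq:estimate.L2error} is empty, since there is no strictly increasing $n$-tuple $l_1<\dots<l_n<n$ of natural numbers, so it remains to bound the finite sum over $j\in\{0,\dots,n-1\}$ and over $l_1<\dots<l_{j+1}=n$, whose summands split into the four groups appearing in the curly bracket of \eqref{eq:estimate.L2error}: an $\eps$-term, a $g$-difference term, an $F(0)$-term, and a ${\bf u}^{\infty}$-term.

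The first step is to remove the randomness from each summand. In the $\eps$-, $F(0)$- and ${\bf u}^{\infty}$-terms the scalar prefactor is a deterministic function of the state evaluated at a random point and is thus bounded pathwise by $\sup_{(t,z)\in[t_0,T]\times\R^d}\|\eps(t,z)\|_\infty$, by $\sup_{(t,z)\in[t_0,T]\times\R^d}|(F(0))(t,z)|$, and by $\sup_{(t,z)\in[t_0,T]\times\R^d}\|{\bf u}^{\infty}(t,z)\|_\infty$, respectively. For the $g$-difference term I would instead use the Lipschitz bound \eqref{eq:gLipschitz} to estimate $|g(x+W^0_T-W^0_{t_0})-g(x+W^0_{t_j}-W^0_{t_0})|$ by $\sum_{\alpha=1}^dK_\alpha|(W^0_T-W^0_{t_j})_\alpha|$, i.e.\ by a function of the single increment $W^0_T-W^0_{t_j}$. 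After these reductions each remaining $L^2(\P;\R)$-norm has the form $\|Y\prod_{i=1}^{m}(1,\tfrac{W^0_{t_i}-W^0_{t_{i-1}}}{t_i-t_{i-1}})_{\nu_{i-1}}\|_{L^2(\P;\R)}$ with $Y$ depending on at most one of the increments $W^0_{t_i}-W^0_{t_{i-1}}$; independence of Brownian increments over disjoint intervals then factorizes it, each unpaired factor equalling $1$ or $(t_i-t_{i-1})^{-1/2}$, and the $g$-paired factor being, after a short computation with the first and fourth absolute moments of a Gaussian, at most $\max\{\sqrt{T-t_0},\sqrt 3\}\,\|K\|_1$ (which is where that constant enters the statement).

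The second step is the $\nu$- and $t$-summation. Each sum $\sum_{\nu=1}^{d+1}L_\nu$ against a factor in $\{1,(t_i-t_{i-1})^{-1/2}\}$ is at most $\|L\|_1\max\{1,(t_i-t_{i-1})^{-1/2}\}\le\|L\|_1(\sqrt{T-t_0}+1)(t_i-t_{i-1})^{-1/2}$ (using $t_i-t_{i-1}\le T-t_0$), so each level contributes a factor $\le(\sqrt{T-t_0}+1)\|L\|_1\,q^{Q,[t_{i-1},T]}(t_i)(t_i-t_{i-1})^{-1/2}$. Summing over $t_0<t_1<\dots<t_m<T$ I would invoke Lemma~\ref{l:ub_iterated_GL} to bound $\sum\prod_{i=1}^{m}q^{Q,[t_{i-1},T]}(t_i)(t_i-t_{i-1})^{-1/2}$ by $2((T-t_0)\pi)^{m/2}/\Gamma(m/2)$, and summing over the admissible tuples $l_1<\dots<l_j<n$ I would use Lemma~\ref{l:iterated_sum} to obtain the factor $\binom{n-1}{j}$. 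Crucially, the power of $M$ from the prefactor $M^{-(n-j-l_1)/2}$ in \eqref{eq:estimate.L2error} combines with the power of $M$ attached to the base term ($M^{-l_1/2}$ for the $g$- and $F(0)$-terms, $M^{-(l_1-1)/2}$ for the ${\bf u}^{\infty}$-term, and none for the $\eps$-term, where one simply uses $M^{-(n-j-l_1)/2}\le1$) into $M^{-(n-j)/2}$, $M^{-(n-j)/2}$, $M^{-(n-j-1)/2}$, respectively, none of which depends on $l_1$.

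The final, most delicate, step is the assembly. After the above, the $g$-, $F(0)$- and ${\bf u}^{\infty}$-contributions are bounded by $M^{-n/2}$ (with an extra $M^{1/2}$ for the ${\bf u}^{\infty}$-term) times sums of the form $\sum_{j\ge1}\binom{n-1}{j}C^jM^{j/2}/\Gamma(j/2)$ (or with $\Gamma((j+1)/2)$), after the level-wise constants and the factors $2^j$ have been absorbed into powers of $C$ by means of $2(\sqrt{T-t_0}+1)\sqrt{(T-t_0)\pi}(\|L\|_1+1)<C$. Since $C\ge1$, one has $\binom{n-1}{j}C^j\le 2^{n-1}C^{n-1}$ for every $j\le n-1$; pulling this out reduces matters to $\sum_{j\ge1}M^{j/2}/\Gamma(j/2)$ and $\sum_{j\ge0}M^{j/2}/\Gamma((j+1)/2)$, which, split into even and odd $j$, become series of the type $\sum_{m\ge0}M^m/m!$ and $\sum_{m\ge0}(2m+1)M^m/m!$ (the latter via $\Gamma(m+\tfrac12)\ge\sqrt\pi\,m!/(2m+1)$, which follows from $4^m\le(2m+1)\binom{2m}{m}$), each at most a constant multiple of $M^{3/2}e^M$ — this is the origin of the factor $e^M$. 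Together with the trivial $j=0$ contributions (giving the ``$+1$'' accompanying the $\eps$-term and negligible $O(M^{-n/2})$ pieces elsewhere) and crude comparisons such as $(1+C)^{n-1}\le(2C)^{n-1}\le(4C)^{n-1}$ and $C^{n-1}\le C^n$, this yields the asserted inequality. The hard part is purely the bookkeeping of constants needed to land the explicit coefficients $7$, $14$, $(4C)^{n-1}$, and — already in the first step — the careful use of independence, since the prefactors depend a priori on all of the increments and must first be dominated pathwise (or, for $g$, reduced to a single increment via \eqref{eq:gLipschitz}) before the factorization is legitimate.
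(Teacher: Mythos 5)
Your proposal follows the paper's own proof essentially step for step: apply Lemma~\ref{l:estimate.L2error} with $k=n$ so that the remainder term vanishes, dominate the integrands pathwise (respectively via \eqref{eq:gLipschitz} for the $g$-difference), factorize the $L^2$-norms by independence of Brownian increments, invoke Lemma~\ref{l:ub_iterated_GL} and Lemma~\ref{l:iterated_sum} for the iterated $t$- and $l$-sums, and finally dominate the resulting $\binom{n-1}{j}$-weighted $\Gamma$-series by even/odd splitting — exactly the sequence \eqref{eq:non-recursive-bound}--\eqref{eq:aux_glob6} of the paper. The only substantive deviation is your treatment of the $\eps$-term, where you bound $M^{-(n-j-l_1)/2}\le 1$ instead of carrying the factor $\sqrt{M}^{\,l_1}$ through the $l_1$-sum as in \eqref{eq:ub_sum3}; this shortcut is valid (since $l_1\le n-j$) and in fact yields the slightly sharper constant $12(2C)^{n-1}+1$ in place of the paper's $14(4C)^{n-1}+1$, so the asserted inequality follows a fortiori.
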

\begin{proof}[Proof of Theorem~\ref{thm:rate}]
Lemma \ref{l:estimate.L2error} implies that
  \begin{equation}  \begin{split}\label{eq:non-recursive-bound}
   &  \left\| \left( {\bf U}_{n,M,Q}^{0}(t_0,x)-{\bf u}^{\infty}(t_0,x)\right)_{\nu_0}\right\|_{L^2(\P;\R)}
   \\&
   \leq
   \sum_{j=0}^{n-1}
   \sum_{\substack{l_1,\ldots,l_{j+1}\in\N,\\ l_1<\ldots<l_{j+1}=n}}
   \sum_{\substack{t_1,\ldots,t_j,t_{j+1}\in\R,\\t_0<t_1<\ldots<t_j<t_{j+1}\leq T}}
    \sum_{\nu_1,\ldots,\nu_{j+1} \in \{1,\ldots,d+1\}}
   \tfrac{2^j}{\sqrt{M^{n-j-l_1}}}
   \left[\prod_{i=1}^j L_{\nu_i}q^{Q,[t_{i-1},T]}(t_i)\right]
  \\&\quad\cdot
   \Bigg\{
   \1_{\{1\}}(\nu_{j+1})
   \Bigg(
    \1_{\{T\}}(t_{j+1})\Bigg(\left\|\left(\eps(t_j,x+W^0_{t_j}-W^0_{t_0})\right)_{\nu_j}\prod_{i=1}^j
\Big(   
   1,
     \tfrac{ 
     W^{0}_{t_i}- W^{0}_{t_{i-1}}
     }{ {t_{i}-t_{i-1}} } 
    \Big)_{\nu_{i-1}}
    \right\|_{L^2(\P;\R)}
    \\&\qquad \quad
    +\tfrac{1}{\sqrt{M^{l_1}}}
   \left\|\left(g(x+W^0_{T}-W^0_{t_0})-g(x+W^0_{t_j}-W^0_{t_0})\right)
   \prod_{i=1}^{j+1}
     \Big(   
   1,
     \tfrac{ 
     W^{0}_{t_i}- W^{0}_{t_{i-1}}
     }{ {t_{i}-t_{i-1}} } 
     \Big)_{\nu_{i-1}}
   \right\|_{L^2(\P;\R)}
   \Bigg)
   \\&\quad\qquad
   +
   \tfrac{q^{Q,[t_j,T]}(t_{j+1})}{\sqrt{M^{l_1}}}
   \left\|\left(F(0)\right)\!(t_{j+1},x+W_{t_{j+1}}^0-W_{t_0}^0)
   \prod_{i=1}^{j+1}
     \Big(   
   1,
     \tfrac{ 
     W^{0}_{t_i}- W^{0}_{t_{i-1}}
     }{ {t_{i}-t_{i-1}} } 
     \Big)
     _{\nu_{i-1}}
   \right\|_{L^2(\P;\R)}\Bigg)
   \\&\quad\qquad
   +
   \tfrac{L_{\nu_{j+1}}q^{Q,[t_j,T]}(t_{j+1})}{\sqrt{M^{l_1-1}}}
   \left\|\left({\bf u}^{\infty}(t_{j+1},x+W_{t_{j+1}}^0-W_{t_0}^0)\right)_{\nu_{j+1}}
   \prod_{i=1}^{j+1}
     \Big(   
   1,
     \tfrac{ 
     W^{0}_{t_i}- W^{0}_{t_{i-1}}
     }{ {t_{i}-t_{i-1}} } 
     \Big)_{\nu_{i-1}}
   \right\|_{L^2(\P;\R)}
   \Bigg\}
   .
  \end{split}     \end{equation}
  This, \eqref{eq:gLipschitz} and independence of Brownian increments prove that
  \begin{equation} \label{eq:aux_glob1}
   \begin{split}
   &  \left\| \left( {\bf U}_{n,M,Q}^{0}(t_0,x)-{\bf u}^{\infty}(t_0,x)\right)_{\nu_0}\right\|_{L^2(\P;\R)}
   \\&
   \leq
   \sum_{j=0}^{n-1}
   \sum_{\substack{l_1,\ldots,l_{j+1}\in\N,\\ l_1<\ldots<l_{j+1}=n}}
   \sum_{\substack{t_1,\ldots,t_j,t_{j+1}\in\R,\\t_0<t_1<\ldots<t_j<t_{j+1}\leq T}}
    \sum_{\nu_1,\ldots,\nu_{j+1} \in \{1,\ldots,d+1\}}
   \tfrac{2^j}{\sqrt{M^{n-j-l_1}}}
   \left[\prod_{i=1}^j L_{\nu_i}q^{Q,[t_{i-1},T]}(t_i)\right]
  \\&\quad\cdot
   \Bigg\{
   \1_{\{1\}}(\nu_{j+1})
   \Bigg(
    \1_{\{T\}}(t_{j+1})\Bigg(\left[\sup_{(t,z)\in [t_0,T]\times \R^d}\left(\eps(t,z)\right)_{\nu_j}\right]\prod_{i=1}^j\left\|
\Big(   
   1,
     \tfrac{ 
     W^{0}_{t_i}- W^{0}_{t_{i-1}}
     }{ {t_{i}-t_{i-1}} } 
    \Big)_{\nu_{i-1}}
    \right\|_{L^2(\P;\R)}
    \\&\qquad \quad
    +\sum_{\alpha=1}^{d}\tfrac{K_\alpha}{\sqrt{M^{l_1}}}
    \left\|
    (W^0_T-W^0_{t_j})_{\alpha}
     \Big(   
   1,
     \tfrac{ 
     W^{0}_{T}- W^{0}_{t_{j}}
     }{ {T-t_{j}} } 
     \Big)_{\nu_{j}}
   \right\|_{L^2(\P;\R)}
   \prod_{i=1}^{j}\left\|
     \Big(   
   1,
     \tfrac{ 
     W^{0}_{t_i}- W^{0}_{t_{i-1}}
     }{ {t_{i}-t_{i-1}} } 
     \Big)_{\nu_{i-1}}
   \right\|_{L^2(\P;\R)}
   \Bigg)
   \\&\quad\qquad
   +
   \tfrac{q^{Q,[t_j,T]}(t_{j+1})\left[\sup_{(t,z)\in[t_0,T]\times \R^d}|(F(0))(t,z)|\right]}{\sqrt{M^{l_1}}}
   \prod_{i=1}^{j+1}
   \left\|
     \Big(   
   1,
     \tfrac{ 
     W^{0}_{t_i}- W^{0}_{t_{i-1}}
     }{ {t_{i}-t_{i-1}} } 
     \Big)
     _{\nu_{i-1}}
   \right\|_{L^2(\P;\R)}\Bigg)
   \\&\quad\qquad
   +
   \tfrac{L_{\nu_{j+1}}q^{Q,[t_j,T]}(t_{j+1})\left[\sup_{(t,z)\in [t_0,T]\times \R^d}\left|\left({\bf u}^{\infty}(t,z)\right)_{\nu_{j+1}}\right|\right]}{\sqrt{M^{l_1-1}}}
   \prod_{i=1}^{j+1}\left\|
     \Big(   
   1,
     \tfrac{ 
     W^{0}_{t_i}- W^{0}_{t_{i-1}}
     }{ {t_{i}-t_{i-1}} } 
     \Big)_{\nu_{i-1}}
   \right\|_{L^2(\P;\R)}
   \Bigg\}
   .
  \end{split}     \end{equation}
  It holds for all $\nu\in \{1,\ldots,d+1\}$, $t\in [0,T)$ that
  \begin{equation}
  \begin{split}
  &
  \sum_{\alpha=1}^{d}K_\alpha
    \left\|
    (W^0_T-W^0_{t})_{\alpha}
     \Big(   
   1,
     \tfrac{ 
     W^{0}_{T}- W^{0}_{t}
     }{ {T-t} } 
     \Big)_{\nu}
   \right\|_{L^2(\P;\R)}\\
   &=
   \sum_{\alpha=1}^{d}K_\alpha\left(\sqrt{T-t}\1_{\{1\}}(\nu)+\tfrac{\1_{[2,\infty)}(\nu)}{T-t}\|(W^0_T-W^0_t)_\alpha(W^0_T-W^0_t)_{\nu-1}\|_{L^2(\P;\R)}\right)\\
   &=\sqrt{T-t}\|K\|_1\1_{\{1\}}(\nu)+\tfrac{\1_{[2,\infty)}(\nu)}{T-t}\left(K_{\nu-1}\|(W^0_T-W^0_t)^2_{\nu-1}\|_{L^2(\P;\R)}+ \sum_{\alpha\in \{1,\ldots,d\}\setminus \{\nu-1\}}K_\alpha \|(W^0_T-W^0_t)_\alpha\|_{L^2(\P;\R)}^2\right)\\
   &=\sqrt{T-t}\|K\|_1\1_{\{1\}}(\nu)+\1_{[2,\infty)}(\nu)\left(\sqrt{3}K_{\nu-1}+ \sum_{\alpha\in \{1,\ldots,d\}\setminus \{\nu-1\}}K_\alpha\right)\\
   &\leq \max\{\sqrt{T-t},\sqrt{3}\}\|K\|_1.
   \end{split}
  \end{equation}
  This and \eqref{eq:aux_glob1} show that
  \begin{equation} \label{eq:aux_glob2} \begin{split}
   &  \left\| \left( {\bf U}_{n,M,Q}^{0}(t_0,x)-{\bf u}^{\infty}(t_0,x)\right)_{\nu_0}\right\|_{L^2(\P;\R)}
   \\&
   \leq
   \left[\sup_{(t,z)\in [t_0,T]\times \R^d}\left(\eps(t,z)\right)_{\nu_0}\right]
    +\tfrac{\max\{\sqrt{T-t_0},\sqrt{3}\}\|K\|_1}{\sqrt{M^{n}}}\\
    &
    +
    \tfrac{\left[\sup_{(t,z)\in[t_0,T]\times \R^d}|(F(0))(t,z)|\right]}{\sqrt{M^{n}}}
    \sum_{t_1\in(t_0,T]}
    q^{Q,[t_0,T]}(t_{1})
   \left\|
     \Big(   
   1,
     \tfrac{ 
     W^{0}_{t_1}- W^{0}_{t_{0}}
     }{ {t_{1}-t_{0}} } 
     \Big)
     _{\nu_{0}}
   \right\|_{L^2(\P;\R)}
    \\
   &
   +
   \tfrac{\left[\sum_{\nu_1=1}^{d+1}L_{\nu_1}\sup_{(t,z)\in [t_0,T]\times \R^d}\left|\left({\bf u}^{\infty}(t,z)\right)_{\nu_{1}}\right|\right]}{\sqrt{M^{n-1}}}
   \sum_{t_1\in(t_0,T]}
   q^{Q,[t_0,T]}(t_{1})
   \left\|
     \Big(   
   1,
     \tfrac{ 
     W^{0}_{t_1}- W^{0}_{t_{0}}
     }{ {t_{1}-t_{0}} } 
     \Big)_{\nu_{0}}
   \right\|_{L^2(\P;\R)}
   \\&
   +
   \sum_{j=1}^{n-1}
   \sum_{\substack{l_1,\ldots,l_{j}\in\N,\\ l_1<\ldots<l_{j}<n}}
   \sum_{\substack{t_1,\ldots,t_j,t_{j+1}\in\R,\\t_0<t_1<\ldots<t_j<t_{j+1}\leq T}}
    \sum_{\nu_1,\ldots,\nu_{j+1} \in \{1,\ldots,d+1\}}
   \tfrac{2^j}{\sqrt{M^{n-j-l_1}}}
   \left[\prod_{i=1}^j L_{\nu_i}q^{Q,[t_{i-1},T]}(t_i)\right]
  \\&\quad\cdot
   \Bigg\{
   \1_{\{1\}}(\nu_{j+1})
   \Bigg(
    \1_{\{T\}}(t_{j+1})\Bigg(\left[\sup_{(t,z)\in [t_0,T]\times \R^d}\left(\eps(t,z)\right)_{\nu_j}\right]\prod_{i=1}^j\left\|
\Big(   
   1,
     \tfrac{ 
     W^{0}_{t_i}- W^{0}_{t_{i-1}}
     }{ {t_{i}-t_{i-1}} } 
    \Big)_{\nu_{i-1}}
    \right\|_{L^2(\P;\R)}
    \\&\qquad \quad
    +\tfrac{\max\{\sqrt{T-t_0},\sqrt{3}\}\|K\|_1}{\sqrt{M^{l_1}}}
   \prod_{i=1}^{j}\left\|
     \Big(   
   1,
     \tfrac{ 
     W^{0}_{t_i}- W^{0}_{t_{i-1}}
     }{ {t_{i}-t_{i-1}} } 
     \Big)_{\nu_{i-1}}
   \right\|_{L^2(\P;\R)}
   \Bigg)
   \\&\quad\qquad
   +
   \tfrac{q^{Q,[t_j,T]}(t_{j+1})\left[\sup_{(t,z)\in[t_0,T]\times \R^d}|(F(0))(t,z)|\right]}{\sqrt{M^{l_1}}}
   \prod_{i=1}^{j+1}
   \left\|
     \Big(   
   1,
     \tfrac{ 
     W^{0}_{t_i}- W^{0}_{t_{i-1}}
     }{ {t_{i}-t_{i-1}} } 
     \Big)
     _{\nu_{i-1}}
   \right\|_{L^2(\P;\R)}\Bigg)
   \\&\quad\qquad
   +
   \tfrac{L_{\nu_{j+1}}q^{Q,[t_j,T]}(t_{j+1})\left[\sup_{(t,z)\in [t_0,T]\times \R^d}\left|\left({\bf u}^{\infty}(t,z)\right)_{\nu_{j+1}}\right|\right]}{\sqrt{M^{l_1-1}}}
   \prod_{i=1}^{j+1}\left\|
     \Big(   
   1,
     \tfrac{ 
     W^{0}_{t_i}- W^{0}_{t_{i-1}}
     }{ {t_{i}-t_{i-1}} } 
     \Big)_{\nu_{i-1}}
   \right\|_{L^2(\P;\R)}
   \Bigg\}
   .
  \end{split}     \end{equation}
  For all $j\in \N$, $\nu_0,\ldots,\nu_{j-1}\in \{1,\ldots,d+1\}$, and $t_1,\ldots,t_j \in \R$ satisfying $t_0<t_1<\ldots<t_j<T$ it holds that
  \begin{equation}
  \prod_{i=1}^{j}\left\|
     \Big(   
   1,
     \tfrac{ 
     W^{0}_{t_i}- W^{0}_{t_{i-1}}
     }{ {t_{i}-t_{i-1}} } 
     \Big)_{\nu_{i-1}}
   \right\|_{L^2(\P;\R)}
   =
   \prod_{i=1}^{j}
   \left[
    \1_{\{1\}}(\nu_{i-1})+\frac{\1_{[2,\infty)}(\nu_{i-1})}{\sqrt{t_i-t_{i-1}}}
    \right]
    \le 
    (\sqrt{T-t_0}+1)^j
    \prod_{i=1}^{j}
    \frac{1}{\sqrt{t_i-t_{i-1}}}.
  \end{equation}
This and \eqref{eq:aux_glob2} ensure that
  \begin{equation} \label{eq:aux_glob3} \begin{split}
   &  \left\| \left( {\bf U}_{n,M,Q}^{0}(t_0,x)-{\bf u}^{\infty}(t_0,x)\right)_{\nu_0}\right\|_{L^2(\P;\R)}
   \\&
   \leq
   \left[\sup_{(t,z)\in [t_0,T]\times \R^d}\left(\eps(t,z)\right)_{\nu_0}\right]
    +\tfrac{\max\{\sqrt{T-t_0},\sqrt{3}\}\|K\|_1}{\sqrt{M^{n}}}
    +
    \tfrac{(\sqrt{T-t_0}+1)\left[\sup_{(t,z)\in[t_0,T]\times \R^d}|(F(0))(t,z)|\right]}{\sqrt{M^{n}}}
    \sum_{t_1\in(t_0,T]}
   \tfrac{q^{Q,[t_0,T]}(t_{1})}{\sqrt{t_1-t_{0}}}
    \\
   &
   +
   \tfrac{(\sqrt{T-t_0}+1)\left[\sup_{(t,z)\in [t_0,T]\times \R^d}\left\|{\bf u}^{\infty}(t,z)\right\|_\infty\right]\sum_{\nu_1=1}^{d+1}L_{\nu_1}}{\sqrt{M^{n-1}}}
    \sum_{t_1\in(t_0,T]}
   \tfrac{q^{Q,[t_0,T]}(t_{1})}{\sqrt{t_1-t_{0}}}
   \\&
   +
   \sum_{j=1}^{n-1}
   \sum_{\substack{l_1,\ldots,l_{j}\in\N,\\ l_1<\ldots<l_{j}<n}}
   \vast\{
    \tfrac{2^j(\sqrt{T-t_0}+1)^j\left[\sup_{(t,z)\in [t_0,T]\times \R^d
   }\|\eps(t,z)\|_\infty\right]}{\sqrt{M^{n-j-l_1}}}
   \left[
    \sum_{\substack{t_1,\ldots,t_j\in\R,\\t_0<t_1<\ldots<t_j< T}}    
    \prod_{i=1}^{j}
    \tfrac{q^{Q,[t_{i-1},T]}(t_i)}{\sqrt{t_i-t_{i-1}}}
    \right]
    \left[
    \sum_{\nu_1,\ldots,\nu_{j} \in \{1,\ldots,d+1\}}
    \prod_{i=1}^{j}L_{\nu_i}
    \right]
    \\&
    +\tfrac{2^j (\sqrt{T-t_0}+1)^j\max\{\sqrt{T-t_0},\sqrt{3}\}\|K\|_1}{\sqrt{M^{n-j}}}
    \left[
    \sum_{\substack{t_1,\ldots,t_j\in\R,\\t_0<t_1<\ldots<t_j< T}}    
    \prod_{i=1}^{j}
    \tfrac{q^{Q,[t_{i-1},T]}(t_i)}{\sqrt{t_i-t_{i-1}}}
    \right]
    \left[
    \sum_{\nu_1,\ldots,\nu_{j} \in \{1,\ldots,d+1\}}
    \prod_{i=1}^{j}L_{\nu_i}
    \right]
   \\&
   +
   \tfrac{2^j(\sqrt{T-t_0}+1)^{j+1}\left[\sup_{(t,z)\in[t_0,T]\times \R^d}|(F(0))(t,z)|\right]}{\sqrt{M^{n-j}}}
   \left[
   \sum_{\substack{t_1,\ldots,t_j,t_{j+1}\in\R,\\t_0<t_1<\ldots<t_j<t_{j+1}\leq T}}
   \prod_{i=1}^{j+1}
   \tfrac{q^{Q,[t_{i-1},T]}(t_i)}{\sqrt{t_i-t_{i-1}}}
   \right]
    \left[
    \sum_{\nu_1,\ldots,\nu_{j} \in \{1,\ldots,d+1\}}
    \prod_{i=1}^{j}L_{\nu_i}
    \right]
   \\&
   +
   \tfrac{2^j(\sqrt{T-t_0}+1)^{j+1}\left[\sup_{(t,z)\in [t_0,T]\times \R^d}\|{\bf u}^{\infty}(t,z)\|_\infty\right]}{\sqrt{M^{n-j-1}}}
   \left[
   \sum_{\substack{t_1,\ldots,t_j,t_{j+1}\in\R,\\t_0<t_1<\ldots<t_j<t_{j+1}\leq T}}
   \prod_{i=1}^{j+1}
   \tfrac{q^{Q,[t_{i-1},T]}(t_i)}{\sqrt{t_i-t_{i-1}}}
   \right]
    \left[
    \sum_{\nu_1,\ldots,\nu_{j+1} \in \{1,\ldots,d+1\}}
    \prod_{i=1}^{j+1}L_{\nu_i}
    \right]
   \vast\}
   .
  \end{split}     \end{equation}  
  Observe that for all $j\in \N$ it holds that
  \begin{equation}
  \left[
    \sum_{\nu_1,\ldots,\nu_{j} \in \{1,\ldots,d+1\}}
    \prod_{i=1}^{j}L_{\nu_i}
    \right]=\|L\|_1^j.
  \end{equation}
This, Lemma \ref{l:ub_iterated_GL}, \eqref{eq:aux_glob3}, and the fact that $\Gamma(\frac{1}{2})=\sqrt{\pi}$ imply that
  \begin{equation} \label{eq:aux_glob4} \begin{split}
   &  \left\| \left( {\bf U}_{n,M,Q}^{0}(t_0,x)-{\bf u}^{\infty}(t_0,x)\right)_{\nu_0}\right\|_{L^2(\P;\R)}
   \\&
   \leq
  \left[ \sup_{(t,z)\in [t_0,T]\times \R^d}\left(\eps(t,z)\right)_{\nu_0}\right]
    +\tfrac{\max\{\sqrt{T-t_0},\sqrt{3}\}\|K\|_1}{\sqrt{M^{n}}}
   \\
   &
   +
   \tfrac{2\sqrt{T-t_0}(\sqrt{T-t_0}+1)}{\sqrt{M^{n-1}}}
   \left(
   \tfrac{\left[\sup_{(t,z)\in[t_0,T]\times \R^d}|(F(0))(t,z)|\right]}{\sqrt{M}}
   +
   \|L\|_1\left[\sup_{(t,z)\in [t_0,T]\times \R^d}\left\|{\bf u}^{\infty}(t,z)\right\|_\infty\right]
   \right)
   \\&
   +2
   \sum_{j=1}^{n-1}
   \sum_{\substack{l_1,\ldots,l_{j}\in\N,\\ l_1<\ldots<l_{j}<n}}
   \Bigg\{
    \tfrac{2^j\|L\|_1^j(\sqrt{T-t_0}+1)^j\left(\sqrt{(T-t_0)\pi}\right)^j\left[\sup_{(t,z)\in [t_0,T]\times \R^d
   }\|\eps(t,z)\|_\infty\right]}{\sqrt{M^{n-j-l_1}}\Gamma(\frac{j}{2})}
    \\&
    +\tfrac{2^j\|L\|_1^j(\sqrt{T-t_0}+1)^j\left(\sqrt{(T-t_0)\pi}\right)^j\max\{\sqrt{T-t_0},\sqrt{3}\}\|K\|_1}{\sqrt{M^{n-j}}\Gamma(\frac{j}{2})}
   +
   \tfrac{2^j\|L\|_1^j(\sqrt{T-t_0}+1)^{j+1}\left(\sqrt{(T-t_0)\pi}\right)^{j+1}\left[\sup_{(t,z)\in[t_0,T]\times \R^d}|(F(0))(t,z)|\right]}{\sqrt{M^{n-j}}\Gamma(\frac{j+1}{2})}
   \\&
   +
   \tfrac{2^j\|L\|_1^{j+1}(\sqrt{T-t_0}+1)^{j+1}\left(\sqrt{(T-t_0)\pi}\right)^{j+1}\left[\sup_{(t,z)\in [t_0,T]\times \R^d}\|{\bf u}^{\infty}(t,z)\|_\infty\right]}{\sqrt{M^{n-j-1}}\Gamma(\frac{j+1}{2})}
   \Bigg\}
   .
  \end{split}     \end{equation}  
This, Lemma \ref{l:iterated_sum} and the definition \eqref{eq:defC1} of $C$ show that
\begin{equation} \label{eq:aux_glob5} \begin{split}
   &  \left\| \left( {\bf U}_{n,M,Q}^{0}(t_0,x)-{\bf u}^{\infty}(t_0,x)\right)_{\nu_0}\right\|_{L^2(\P;\R)}
   \\&
   \leq
   \left[\sup_{(t,z)\in [t_0,T]\times \R^d}\left(\eps(t,z)\right)_{\nu_0}\right]
    +\tfrac{\max\{\sqrt{T-t_0},\sqrt{3}\}\|K\|_1}{\sqrt{M^{n}}}
%    \\
%    &
   +
   \tfrac{C\left(
  \left[\sup_{(t,z)\in[t_0,T]\times \R^d}|(F(0))(t,z)|\right]
   + \left[\sup_{(t,z)\in [t_0,T]\times \R^d}\left\|{\bf u}^{\infty}(t,z)\right\|_\infty\right]
   \right)}{\sqrt{\pi}\sqrt{M^{n-1}}}   
   \\&
   +\tfrac{2 \left[\sup_{(t,z)\in [t_0,T]\times \R^d
   }\|\eps(t,z)\|_\infty\right]}{\sqrt{M^n}}
   \sum_{j=1}^{n-1}
   \tfrac{(C\sqrt{M})^j}{\Gamma(\frac{j}{2})}
   \sum_{l_1=1}^{n-j}
   \sqrt{M}^{l_1}
   \binom{n-l_1-1}{j-1}
    \\&
    +\tfrac{2 \max\{\sqrt{T-t_0},\sqrt{3}\}\|K\|_1}{\sqrt{M^n}}
    \sum_{j=1}^{n-1}
    \tfrac{(C\sqrt{M})^j}{\Gamma(\frac{j}{2})}\binom{n-1}{j}
+\tfrac{C\left[\sup_{(t,z)\in[t_0,T]\times \R^d}|(F(0))(t,z)|\right]}{\sqrt{M^n}}
    \sum_{j=1}^{n-1}
    \tfrac{(C\sqrt{M})^j}{\Gamma(\frac{j+1}{2})}\binom{n-1}{j}
    \\&
+\tfrac{\left[\sup_{(t,z)\in [t_0,T]\times \R^d}\|{\bf u}^{\infty}(t,z)\|_\infty\right]}{\sqrt{M^n}}
    \sum_{j=1}^{n-1}
    \tfrac{(C\sqrt{M})^{j+1}}{\Gamma(\frac{j+1}{2})}\binom{n-1}{j}
    \\&
    \leq
   \left[\sup_{(t,z)\in [t_0,T]\times \R^d}\left(\eps(t,z)\right)_{\nu_0}\right]
    +\tfrac{\max\{\sqrt{T-t_0},\sqrt{3}\}\|K\|_1}{\sqrt{M^{n}}}
   \\
   &
   +
   \tfrac{C \left(
   \left[\sup_{(t,z)\in[t_0,T]\times \R^d}|(F(0))(t,z)|\right]
   + \left[\sup_{(t,z)\in [t_0,T]\times \R^d}\left\|{\bf u}^{\infty}(t,z)\right\|_\infty\right]
   \right)}{\sqrt{M^{n-1}}}
  \sum_{j=0}^{n-1}
    \tfrac{(C\sqrt{M})^j}{\Gamma(\frac{j+1}{2})}\binom{n-1}{j}
   \\&
   +\tfrac{2 \left[\sup_{(t,z)\in [t_0,T]\times \R^d
   }\|\eps(t,z)\|_\infty\right]}{\sqrt{M^n}}
   \sum_{j=1}^{n-1}
   \tfrac{(C\sqrt{M})^j}{\Gamma(\frac{j}{2})}
   \sum_{l=1}^{n-j}
   \sqrt{M}^l
   \binom{n-l-1}{j-1}
    \\&
    +\tfrac{2 \max\{\sqrt{T-t_0},\sqrt{3}\}\|K\|_1}{\sqrt{M^n}}
    \sum_{j=1}^{n-1}
    \tfrac{(C\sqrt{M})^j}{\Gamma(\frac{j}{2})}\binom{n-1}{j}
   .
  \end{split}     \end{equation}  
  It holds for all $r\in [0,\infty)$ that
  \begin{equation}
  \begin{split}\label{eq:ub_sum_gen}
     \sum_{j=0}^{n-1}\tfrac{r^j}{\Gamma(\frac{j+1}{2})}
     &\leq 
     \tfrac{1}{\sqrt{\pi}}+ \sum _{j=1}^{n-1}\tfrac{r^j}{\Gamma(\lfloor\frac{j+1}{2}\rfloor)}
     = 
     \tfrac{1}{\sqrt{\pi}}+ \sum_{l=1}^{\lfloor \frac{n}{2} \rfloor} \frac{r^{2l-1}}{\Gamma(l)}
     +\sum_{l=1}^{\lfloor \frac{n-1}{2} \rfloor} \frac{r^{2l}}{\Gamma(l)}\\
     &= 
     \tfrac{1}{\sqrt{\pi}}+ \sum_{l=0}^{\lfloor \frac{n}{2} \rfloor-1} \frac{r^{2l+1}}{l!}
     +\sum_{l=0}^{\lfloor \frac{n-1}{2} \rfloor-1} \frac{r^{2l+2}}{l!} \leq 
     \tfrac{1}{\sqrt{\pi}}+ r(r+1)e^{r^2}.
     \end{split}
  \end{equation}
  Note that it holds for all $j\in \{0,\ldots,n-1\}$ that 
$
\binom{n-1}{j}\leq \sum_{k=0}^{n-1} \binom{n-1}{k}=2^{n-1}.
$ 
  This and \eqref{eq:ub_sum_gen} ensure that
  \begin{equation}
  \begin{split}\label{eq:ub_sum1}
  \sum_{j=0}^{n-1}
    \tfrac{(C\sqrt{M})^j}{\Gamma(\frac{j+1}{2})}\binom{n-1}{j} &\leq 
(2C)^{n-1}    
 \sum_{j=0}^{n-1}
    \tfrac{\sqrt{M}^j}{\Gamma(\frac{j+1}{2})}
    \leq
    (2C)^{n-1}\left(
    \tfrac{1}{\sqrt{\pi}}+ \sqrt{M}(\sqrt{M}+1)e^{M}
    \right)\\
   &\leq 3(2C)^{n-1}Me^M
  \end{split}
  \end{equation}
 and that
  \begin{equation}
  \begin{split}\label{eq:ub_sum2}
  \sum_{j=1}^{n-1}
    \tfrac{(C\sqrt{M})^j}{\Gamma(\frac{j}{2})}\binom{n-1}{j} &\leq 
    (2C)^{n-1}\sqrt{M}\sum_{j=0}^{n-1} \tfrac{\sqrt{M}^j}{\Gamma(\frac{j+1}{2})}
    \leq
    3(2C)^{n-1}\sqrt{M}^3e^M.
   \end{split}
  \end{equation}
  For all $j\in \{1,\ldots, n-1\}$ it holds that
  \begin{equation}
  \begin{split}
   \sum_{l=1}^{n-j}
   \sqrt{M}^l
   \binom{n-l-1}{j-1} 
   &=
   \sum_{l=j-1}^{n-2}\sqrt{M}^{n-l-1}\binom{l}{j-1}
    \leq \sqrt{M}^{n-1} \sum_{l=j-1}^{\infty}\left(\frac{1}{\sqrt{M}}\right)^{l}\binom{l}{j-1} \\
    &= \frac{\sqrt{M}^{n-1}\left(\frac{1}{\sqrt{M}}\right)^{j-1}}{\left(1-\frac{1}{\sqrt{M}}\right)^{j}}
    =\frac{\sqrt{M}^{n-j}}{\left(1-\frac{1}{\sqrt{M}}\right)^{j}}.
    \end{split}
  \end{equation}
This together with \eqref{eq:ub_sum_gen} ensures that
  \begin{equation}
  \begin{split}\label{eq:ub_sum3}
  \sum_{j=1}^{n-1}
   \tfrac{(C\sqrt{M})^j}{\Gamma(\frac{j}{2})}
   \sum_{l=1}^{n-j}
   \sqrt{M}^l
   \binom{n-l-1}{j-1} 
   &\leq \sqrt{M}^n
   \sum_{j=1}^{n-1}
   \tfrac{C^j}{\Gamma(\frac{j}{2})\left(1-\frac{1}{\sqrt{M}}\right)^{j}}
   \leq \sqrt{M}^n\tfrac{C^{n-1}}{(1-\frac{1}{\sqrt 2})^{n-1}}\sum_{j=1}^{n-1}\frac{1}
   {\Gamma(\frac{j}{2})}
   \\
   &\leq (4C)^{n-1}\sqrt{M}^n 
   \left(\tfrac{1}{\sqrt{\pi}}+2e\right)
\leq 7 (4C)^{n-1}\sqrt{M}^n 
.
\end{split}
\end{equation}
  Combining \eqref{eq:aux_glob5}, \eqref{eq:ub_sum1}, \eqref{eq:ub_sum2}, and \eqref{eq:ub_sum3}
  proves that
  \begin{equation} \label{eq:aux_glob6} \begin{split}
   &  \left\| \left( {\bf U}_{n,M,Q}^{0}(t_0,x)-{\bf u}^{\infty}(t_0,x)\right)_{\nu_0}\right\|_{L^2(\P;\R)}
   \\&
    \leq
   \left[\sup_{(t,z)\in [t_0,T]\times \R^d}\left(\eps(t,z)\right)_{\nu_0}\right]
    +\tfrac{\max\{\sqrt{T-t_0},\sqrt{3}\}\|K\|_1}{\sqrt{M^{n}}}\\
    &
   +
   \tfrac{3C^n2^{n-1}e^M}{\sqrt{M^{n-3}}} 
   \left(
    \left[\sup_{(t,z)\in[t_0,T]\times \R^d}|(F(0))(t,z)|\right]
   + \left[\sup_{(t,z)\in [t_0,T]\times \R^d}\left\|{\bf u}^{\infty}(t,z)\right\|_\infty\right]
   \right) 
   \\&
   +14 (4C)^{n-1}\left[\sup_{(t,z)\in [t_0,T]\times \R^d
   }\|\eps(t,z)\|_\infty\right]
    +\tfrac{6(2C)^{n-1}e^M \max\{\sqrt{T-t_0},\sqrt{3}\}\|K\|_1}{\sqrt{M^{n-3}}}
     \\&
    \leq
    \tfrac{7C^n2^{n-1}e^M}{\sqrt{M^{n-3}}}
    \left(
    \left[\sup_{(t,z)\in[t_0,T]\times \R^d}|(F(0))(t,z)|\right]
   + \left[\sup_{(t,z)\in [t_0,T]\times \R^d}\left\|{\bf u}^{\infty}(t,z)\right\|_\infty\right]  
    +\max\{\sqrt{T-t_0},\sqrt{3}\}\|K\|_1
    \right)
    \\&
     +(14 (4C)^{n-1}+1)\left[\sup_{(t,z)\in [t_0,T]\times \R^d
   }\|\eps(t,z)\|_\infty\right]
   .
  \end{split}     \end{equation}
  This completes the proof of Theorem~\ref{thm:rate}.
  \end{proof}
\begin{lemma}[Quadrature error]\label{l:quad_error_glob}
Assume the setting in Section~\ref{sec:setting.full.discretization},
let $p,Q\in \N$, $x\in \R^d$, $s\in [0,T)$, and assume that $u^\infty \in C^\infty([0,T]\times \R^d,\R)$ and for all $k\in \N_0$ that
\begin{equation}\label{eq:ldeltaintegrandk}
   \sup_{(t,y)\in [0,T]\times \R^d}\frac{\left|\left((\tfrac{\partial}{\partial r}+\tfrac{1}{2}\Delta_y)^k u^{\infty}\right)\!(t,y)\right|}{1+\|y\|_1^p}<\infty.
 \end{equation}
 Then there exists $\xi\in [s,T]^{d+1}$ such that for all $\nu \in \{1,\ldots, d+1\}$ it holds that
\begin{equation}\label{eq:quad_error_glob}
 \begin{split}
   &
    \E\!\left[
    \sum_{t\in(s,T)}q^{Q,[s,T]}(t)
    \left(\funcF({\bf u}^{\infty})\right)\!(t,x+W_{t-s}^0)
     \Big(
     1 ,
     \tfrac{ 
     W^{0}_{t-s}
          }{ {t - s} }
     \Big)_{\nu}
     -
      \smallint_s^{T}
      (\funcF({\bf u}^{\infty}))(t,x+W_{t-s}^0)
     \Big(
     1 ,
     \tfrac{ 
     W^{0}_{t-s}
     }{ t- s }
     \Big)_{\nu}
      \,dt
    \right]
      \\
      &=
   (1,\nabla_x)_\nu\E\left[
    \left((\tfrac{\partial}{\partial r}+\tfrac{1}{2}\Delta_y)^{2Q+1}u^{\infty}\right)\!(\xi_\nu,x+W_{\xi_\nu}^0-W_s^0)
 \right]
    \tfrac{[Q!]^4(T-s)^{2Q+1}}{(2Q+1)[(2Q)!]^3}.
 \end{split}
 \end{equation}   
\end{lemma}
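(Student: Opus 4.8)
The plan is to reduce~\eqref{eq:quad_error_glob} to a one-dimensional Gau\ss--Legendre quadrature error estimate for a smooth scalar function, and then to identify that function and its $2Q$-th derivative through the nonlinear Feynman--Kac / Bismut--Elworthy--Li structure of $u^{\infty}$. First I would, for the fixed $x\in\R^d$ and $s\in[0,T)$, use Fubini's theorem---which is legitimate since $\funcF(\mathbf{u}^{\infty})=-(\tfrac{\partial}{\partial r}+\tfrac12\Delta_y)u^{\infty}$ by~\eqref{eq:PDE} and hence is of polynomial growth by~\eqref{eq:ldeltaintegrandk}, while $(1,\tfrac{W^{0}_{t-s}}{t-s})$ has finite moments---to rewrite the left-hand side of~\eqref{eq:quad_error_glob} as $\sum_{t\in(s,T)}q^{Q,[s,T]}(t)\,h_\nu(t)-\int_s^T h_\nu(t)\,dt$, where $h_\nu(t):=\E[(\funcF(\mathbf{u}^{\infty}))(t,x+W^{0}_{t-s})(1,\tfrac{W^{0}_{t-s}}{t-s})_\nu]$. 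By the Bismut--Elworthy--Li formula (as applied in the proof of Lemma~\ref{l:nonlinear.FK.formula}; cf.~\cite[Proposition 3.2]{Fournie1999}), together with~\eqref{eq:PDE} once more, this may be written as $h_\nu(t)=-(1,\nabla_x)_\nu\,\E[((\tfrac{\partial}{\partial r}+\tfrac12\Delta_y)u^{\infty})(t,x+W^{0}_{t-s})]$.

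Second I would show that $h_\nu\in C^{\infty}([s,T],\R)$ and derive a formula for $h_\nu^{(2Q)}$. Abbreviate $g_k:=(\tfrac{\partial}{\partial r}+\tfrac12\Delta_y)^k u^{\infty}$ and $B_k(t,x):=\E[g_k(t,x+W^{0}_{t-s})]$ for $k\in\N_0$. Applying It\^o's formula to $v\mapsto g_k(s+v,x+W^{0}_{v})$ on $[0,t-s]$ gives $g_k(t,x+W^{0}_{t-s})-g_k(s,x)=\int_0^{t-s}g_{k+1}(s+v,x+W^{0}_{v})\,dv+M^{k}_{t-s}$ for a continuous local martingale $M^{k}$; since $g_k$ and $g_{k+1}$ have polynomial growth by~\eqref{eq:ldeltaintegrandk}, the random variable $\sup_{v\in[0,T-s]}|M^{k}_{v}|$ is integrable (the same type of estimate as in the proof of Lemma~\ref{l:nonlinear.FK.formula}), hence $M^{k}$ is a true martingale and $\E[M^{k}_{t-s}]=0$. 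Therefore $B_k(t,x)=g_k(s,x)+\int_s^{t}B_{k+1}(r,x)\,dr$, so $t\mapsto B_k(t,x)$ is $C^{\infty}$ on $[s,T]$ with $\partial_t^{j}B_k(t,x)=B_{k+j}(t,x)$; differentiating this identity in $x_i$ under the integral sign---legitimate because $|\partial_{x_i}B_{k+1}(r,x)|=|\E[g_{k+1}(r,x+W^{0}_{r-s})\tfrac{(W^{0}_{r-s})_i}{r-s}]|\le c(x)(r-s)^{-1/2}$ is locally integrable in $r$ near $s$ by~\eqref{eq:ldeltaintegrandk} and the Cauchy--Schwarz inequality---and using the Bismut--Elworthy--Li formula shows that $t\mapsto\partial_{x_i}B_k(t,x)$ is likewise $C^{\infty}$ on $[s,T]$ with $\partial_t^{j}\partial_{x_i}B_k(t,x)=\partial_{x_i}B_{k+j}(t,x)$. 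Combining these facts with the first step yields $h_\nu^{(2Q)}(t)=-(1,\nabla_x)_\nu\,B_{2Q+1}(t,x)$.

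Third I would apply the error representation for the $Q$-point Gau\ss--Legendre rule on $[0,1]$ (see~\cite[Display (2.7.12)]{davis2007methods}, exactly as in the proof of Lemma~\ref{l:ub_frac_int}) to the map $u\mapsto h_\nu(s+u(T-s))$ and rescale by means of the weight identity~\eqref{eq:scaling_gl_weights}; this produces some $\xi_\nu\in(s,T)$ with $\sum_{t\in(s,T)}q^{Q,[s,T]}(t)\,h_\nu(t)-\int_s^T h_\nu(t)\,dt=-\tfrac{[Q!]^4(T-s)^{2Q+1}}{(2Q+1)[(2Q)!]^3}\,h_\nu^{(2Q)}(\xi_\nu)$. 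Inserting $h_\nu^{(2Q)}(\xi_\nu)=-(1,\nabla_x)_\nu B_{2Q+1}(\xi_\nu,x)$, using that $W^{0}_{\xi_\nu-s}$ and $W^{0}_{\xi_\nu}-W^{0}_{s}$ have the same distribution so that $B_{2Q+1}(\xi_\nu,x)=\E[((\tfrac{\partial}{\partial r}+\tfrac12\Delta_y)^{2Q+1}u^{\infty})(\xi_\nu,x+W^{0}_{\xi_\nu}-W^{0}_{s})]$, and putting $\xi:=(\xi_1,\dots,\xi_{d+1})\in[s,T]^{d+1}$ then give~\eqref{eq:quad_error_glob}. I expect the main obstacle to be the second step: upgrading the stochastic-integral terms arising from It\^o's formula from local to genuine martingales via the polynomial growth bounds~\eqref{eq:ldeltaintegrandk}, and, in parallel, establishing enough joint regularity of the $B_k$ in $t$ and $x$ (uniformly up to the endpoint $t=s$) to commute $\partial_t^{2Q}$ with $(1,\nabla_x)_\nu$. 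Once these regularity statements are in place, the quadrature step and the remaining bookkeeping are routine.
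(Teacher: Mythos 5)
Your proposal is correct and follows essentially the same route as the paper's own proof: Itô's formula plus the polynomial-growth bound \eqref{eq:ldeltaintegrandk} to show the stochastic-integral term is a true martingale and thereby establish, by induction, that $t\mapsto\E\big[((\tfrac{\partial}{\partial r}+\tfrac12\Delta_y)u^{\infty})(t,x+W^0_{t-s})\big]$ is $C^\infty$ with $k$-th $t$-derivative $\E\big[((\tfrac{\partial}{\partial r}+\tfrac12\Delta_y)^{k+1}u^{\infty})(t,\cdot)\big]$, the Bismut--Elworthy--Li formula for the $\nu\ge 2$ components, and the Gau\ss--Legendre error representation applied componentwise to produce the $\xi_\nu$. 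The only cosmetic differences are that you rescale to $[0,1]$ via \eqref{eq:scaling_gl_weights} rather than applying the error representation directly on $[s,T]$, and that you make the interchange of $\partial_t^{2Q}$ with $\partial_{x_i}$ slightly more explicit.
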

\begin{proof}[Proof of Lemma \ref{l:quad_error_glob}]
Observe that \eqref{eq:ldeltaintegrandk} and the dominated convergence theorem ensure that for every 
$k\in\N_0$ it holds that
 the function
 \begin{equation}  \begin{split}\label{eq:integrandk}
   [s,T]\ni t\mapsto
 \E\!\left[\left((\tfrac{\partial}{\partial r}+\tfrac{1}{2}\Delta_y)^k u^{\infty}\right)\!(t,x+W_{t-s}^0)
 \right]\in\R
 \end{split}     \end{equation}
 is continuous.
  The assumption that $u^{\infty}\in C^{\infty}([0,T]\times\R^d,\R)$
  and
  It\^o's formula imply that 
  for all $t\in[s,T]$, $k\in\N$ it holds $\P$-a.s.\ that
  \begin{align}\label{eq:cor.after.ito}
      &\left((\tfrac{\partial}{\partial r}
            +\tfrac{1}{2}\Delta_y
            )^k
      u^{\infty}\right)\!(t,x+W_{t}^{0}-W_s^0)
      -
      \left((\tfrac{\partial}{\partial r}
            +\tfrac{1}{2}\Delta_y
            )^k
       u^{\infty}\right)\!(s,x)
    \\&
    =\int_s^t
    \left((\tfrac{\partial}{\partial r}+\tfrac{1}{2}\Delta_y)^{k+1} u^{\infty}\right)\!(v,x+W_{v}^0-W_s^0)\,dv
    +\int_s^t
    \left\langle
    \left(\nabla_y(\tfrac{\partial}{\partial r}+\tfrac{1}{2}\Delta_y)^{k}u^{\infty}\right)\!(v,x+W_{v}^0-W_s^0),
    \,dW_v^0
    \right\rangle.\nonumber
  \end{align}
  This and \eqref{eq:ldeltaintegrandk} show that
% Combining this with the assumption
%  that for all $k\in\N$, $x\in\R^d$, $t\in[0,T]$ it holds that
% $\inf_{\delta\in(0,1]}\sup_{s\in[t,T]}\E\left[\left|\left((\tfrac{\partial}{\partial r}+\tfrac{1}{2}\Delta_y)^ku^{\infty}\right)(s,x+W_{s-t}^0)\right|^{1+\delta}
% \right]<\infty$
% implies that
 for all $k\in\N$ it holds that
  $\E\big[\sup_{t\in[s,T]}\big|
    \int_s^t
    \left\langle
    \left(\nabla_y(\tfrac{\partial}{\partial r}+\tfrac{1}{2}\Delta_y)^{k}u^{\infty}\right)(v,x+W_{v}^0-W_s^0),
    \,dW_v^0
    \right\rangle
   \big|\big]<\infty$.
 This implies that
 for all $t\in[s,T]$, $k\in\N$ it holds that
  $\E\big[
    \int_s^t
    \left\langle
    \left(\nabla_y(\tfrac{\partial}{\partial r}+\tfrac{1}{2}\Delta_y)^{k}u^{\infty}\right)(v,x+W_{v}^0-W_s^0),
    \,dW_v^0
    \right\rangle
   \big]=0$.
   This, \eqref{eq:cor.after.ito}, and Fubini's theorem show 
   that
% Taking expectations in~\eqref{eq:cor.after.ito} 
% and applying Fubini's theorem
% therefore yields
  for all $t\in[s,T]$, $k\in\N$ it holds that
  \begin{equation}  \begin{split}\label{eq:exp.cor.after.ito}
    &
     \E\!\left[
      \left((\tfrac{\partial}{\partial r}
            +\tfrac{1}{2}\Delta_y
            )^k
      u^{\infty}\right)\!(t,x+W_{t}^{0}-W_s^0)
    \right]
      -
      \left((\tfrac{\partial}{\partial r}
            +\tfrac{1}{2}\Delta_y
            )^k
      u^{\infty}\right)\!(s,x)
    \\&
    =\int_s^t 
     \E\!\left[
    \left((\tfrac{\partial}{\partial r}+\tfrac{1}{2}\Delta_y)^{k+1}u^{\infty}\right)\!(v,x+W_{v}^0-W_s^0)
    \right]dv.
  \end{split}     \end{equation}
  Equation~\eqref{eq:exp.cor.after.ito} (with $k=1$)
  together with
  \eqref{eq:integrandk} (with $k=2$)
   implies 
  that the function
 %\begin{equation}  \begin{split}\label{eq:integrand0}
   $[s,T]\ni t\mapsto
   \E\!\left[\left((\tfrac{\partial}{\partial r}+\tfrac{1}{2}\Delta_y)u^{\infty}\right)(t,x+W_{t}^0-W_s^0)
 \right]\in\R$
 %\end{split}     \end{equation}
 is continuously differentiable.
 Induction,~\eqref{eq:integrandk}, and
 \eqref{eq:exp.cor.after.ito}
  prove that
  it holds that the function 
  $[s,T]\ni t\mapsto
   \E\!\left[\left((\tfrac{\partial}{\partial r}+\tfrac{1}{2}\Delta_y)u^{\infty}\right)(t,x+W_{t}^0-W_s^0)
 \right]\in\R$ is infinitely often differentiable.
  This, 
 induction, and~\eqref{eq:exp.cor.after.ito}
 demonstrate that for all $k\in\N$, $t\in[s,T]$
 it holds that
 \begin{equation}  \begin{split}\label{eq:estimate.der}
    &\tfrac{\partial^{k}}{\partial t^{k}}
      \E\!\left[
      \left((\tfrac{\partial}{\partial r}+\tfrac{1}{2}\Delta_y)u^{\infty}\right)\!(t,x+W_{t}^0-W_s^0)
      \right]
    %\\ &
    =
     \E\!\left[
    \left((\tfrac{\partial}{\partial r}+\tfrac{1}{2}\Delta_y)^{k+1}u^{\infty}\right)\!(t,x+W_{t}^0-W_s^0)
    \right]
    .
 \end{split}     \end{equation}
 Equation~\eqref{eq:PDE}
 and
 the error representation for the Gau\ss-Legendre quadrature rule
 (see, e.g., \cite[Display (2.7.12)]{davis2007methods})
 imply
 that
 there exists a real number $\xi_1\in[s,T]$ such that
 \begin{align}\label{eq:quad_error_1}
      &
    \sum_{t\in[s,T]}q^{Q,[s,T]}(t)
    \E\!\left[
       \left(\funcF( {\bf u^{\infty}})\right)\!(t,x+W_{t}^0-W_s^0)
\right]
      -
      \int_s^{T}
      \E\!\left[
      (\funcF({\bf u^{\infty}}))(t,x+W_{t}^0-W_s^0)
 \right]
      \,dt
   \\&
   =
     \int_s^{T}
      \E\!\left[
    \left((\tfrac{\partial}{\partial r}+\tfrac{1}{2}\Delta_y)u^{\infty}\right)\!(t,x+W_{t}^0-W_s^0)
 \right]
      \,dt
      -
    \sum_{t\in[s,T]}q^{Q,[s,T]}(t)
    \E\!\left[
    \left((\tfrac{\partial}{\partial r}+\tfrac{1}{2}\Delta_y)u^{\infty}\right)\!(t,x+W_{t}^0-W_s^0)
\right]
    \nonumber
\\&
    =\left(\tfrac{\partial^{2Q}}{\partial t^{2Q}}
      \E\left[
    \left((\tfrac{\partial}{\partial r}+\tfrac{1}{2}\Delta_y)u^{\infty}\right)\!(t,x+W_{t}^0-W_s^0)
 \right]
    \right)\Big|_{t=\xi_1}
    \tfrac{[Q!]^4(T-s)^{2Q+1}}{(2Q+1)[(2Q)!]^3}
    \nonumber
    \\&
    =
      \E\left[
    \left((\tfrac{\partial}{\partial r}+\tfrac{1}{2}\Delta_y)^{2Q+1}u^{\infty}\right)\!(\xi_1,x+W_{\xi_1}^0-W_s^0)
 \right]
    \tfrac{[Q!]^4(T-s)^{2Q+1}}{(2Q+1)[(2Q)!]^3}.
    \nonumber
 \end{align}
 Equation~\eqref{eq:PDE}, the Bismut-Elworthy-Li formula (see, e.g., \cite[Proposition 3.2]{Fournie1999})
 and
 the error representation for the Gau\ss-Legendre quadrature rule
 (see, e.g., \cite[Display (2.7.12)]{davis2007methods})
 imply
 for all $i\in \{1,\ldots,d\}$
 that
 there exists a real number $\xi_{i+1}\in[s,T]$ such that
 \begin{equation}\label{eq:quad_error_i}
 \begin{split}
   &
    \sum_{t\in[s,T]}q^{Q,[s,T]}(t)
    \E\!\left[
       \left(\funcF( {\bf u^{\infty}})\right)\!(t,x+W_{t}^0-W_s^0)
       \left(\tfrac{W^0_s-W^0_t}{s-t}\right)_i
\right]
      -
      \int_s^{T}
      \E\!\left[
      (\funcF({\bf u^{\infty}}))(t,x+W_{t}^0-W_s^0)
       \left(\tfrac{W^0_s-W^0_t}{s-t}\right)_i
 \right]
      \,ds
   \\&
   =\sum_{t\in[s,T]}q^{Q,[s,T]}(t)
    \tfrac{\partial}{\partial x_i}
    \E\!\left[
       \left(\funcF( {\bf u^{\infty}})\right)\!(t,x+W_{t}^0-W_s^0)
\right]
      -
      \int_s^{T}
       \tfrac{\partial}{\partial x_i}
      \E\!\left[
      (\funcF({\bf u^{\infty}}))(t,x+W_{t}^0-W_s^0)
 \right]
      \,dt
   \\&
   =
     \int_s^{T}
     \tfrac{\partial}{\partial x_i}
      \E\!\left[
    \left((\tfrac{\partial}{\partial r}+\tfrac{1}{2}\Delta_y)u^{\infty}\right)\!(t,x+W_{t}^0-W_s^0)
 \right]
      \,dt
      \\
      &\qquad \qquad
      -
    \sum_{t\in[s,T]}q^{Q,[s,T]}(t)
    \tfrac{\partial}{\partial x_i}
    \E\!\left[
    \left((\tfrac{\partial}{\partial r}+\tfrac{1}{2}\Delta_y)u^{\infty}\right)\!(t,x+W_{t}^0-W_s^0)
\right]
\\&
    =\left(\tfrac{\partial^{2Q}}{\partial t^{2Q}}
    \tfrac{\partial}{\partial x_i}
      \E\left[
    \left((\tfrac{\partial}{\partial r}+\tfrac{1}{2}\Delta_y)u^{\infty}\right)\!(t,x+W_{t}^0-W_s^0)
 \right]
    \right)\Big|_{t=\xi_i}
    \tfrac{[Q!]^4(T-s)^{2Q+1}}{(2Q+1)[(2Q)!]^3}
    \\&
    =
      \tfrac{\partial}{\partial x_i}\E\left[
    \left((\tfrac{\partial}{\partial r}+\tfrac{1}{2}\Delta_y)^{2Q+1}u^{\infty}\right)\!(\xi_i,x+W_{\xi_i}^0-W_s^0)
 \right]
    \tfrac{[Q!]^4(T-s)^{2Q+1}}{(2Q+1)[(2Q)!]^3}.
 \end{split}
 \end{equation}
 This and \eqref{eq:quad_error_1} prove \eqref{eq:quad_error_glob}. This completes the proof of Lemma \ref{l:quad_error_glob}.
\end{proof}  
  
\begin{corollary}\label{c:glob_error_nmq}
Assume the setting in Section~\ref{sec:setting.full.discretization},
assume that $u^\infty \in C^\infty([0,T]\times \R^d,\R)$,
let $n,Q\in \N$, $M\in \N\cap [2,\infty)$, $\nu_0\in \{1,\ldots,d+1\}$, $(t_0,x)\in [0,T)\times \R^d$, $\alpha \in [0,1]$ and let $C\in [0,\infty)$ be the real number given by
\begin{equation}
C=2(\sqrt{T-t_0}+1)\sqrt{(T-t_0)\pi}\left(\|L\|_1+1\right)+1.
\end{equation}
 Then it holds that
   \begin{equation} \begin{split}\label{eq:glob_error_nmq}
   &  \left\| \left( {\bf U}_{n,M,Q}^{0}(t_0,x)-{\bf u}^{\infty}(t_0,x)\right)_{\nu_0}\right\|_{L^2(\P;\R)}
   \\&
    \leq
  \tfrac{7C^n2^{n-1}e^M}{\sqrt{M^{n-3}}}
    \left(
    \left[\sup_{(t,z)\in[t_0,T]\times \R^d}|(F(0))(t,z)|\right]
   + \left[\sup_{(t,z)\in [t_0,T]\times \R^d}\left\|{\bf u}^{\infty}(t,z)\right\|_\infty\right]  
    +\max\{\sqrt{T-t_0},\sqrt{3}\}\|K\|_1
    \right)
    \\&
     +\tfrac{(14 (4C)^{n-1}+1)T^{2Q+1}}
     {Q^{2\alpha Q}}
   \left[\sup_{k\in \N}\sup_{(t,z)\in [t_0,T]\times \R^d}\tfrac{
   \left \|(1,\nabla_y)
    \left((\tfrac{\partial}{\partial r}+\tfrac{1}{2}\Delta_y)^{k}u^{\infty}\right)\!(t,z)\right \|_\infty}
    {(k!)^{1-\alpha}}
   \right]    
    .  
\end{split}
\end{equation}
 \end{corollary}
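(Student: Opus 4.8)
The plan is to feed the quadrature-error representation of Lemma~\ref{l:quad_error_glob} into the global error bound of Theorem~\ref{thm:rate}; apart from one elementary factorial estimate, everything is bookkeeping. Abbreviate by $S:=\sup_{k\in\N}\sup_{(t,z)\in[t_0,T]\times\R^d}(k!)^{\alpha-1}\|(1,\nabla_y)((\tfrac{\partial}{\partial r}+\tfrac{1}{2}\Delta_y)^{k}u^{\infty})(t,z)\|_\infty$ the supremum on the right-hand side of~\eqref{eq:glob_error_nmq}. If $S=\infty$, then, since $14(4C)^{n-1}+1\ge 1$ and $T^{2Q+1}Q^{-2\alpha Q}\in(0,\infty)$, the second summand on the right-hand side of~\eqref{eq:glob_error_nmq} equals $+\infty$, whence so does its right-hand side, and the asserted inequality is trivial as both sides lie in $[0,\infty]$. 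So I may assume $S<\infty$. Then the $k=0$ and $k=1$ instances in the definition of $S$ show that $u^{\infty}$, $\nabla_y u^{\infty}$ and $(\tfrac{\partial}{\partial r}+\tfrac{1}{2}\Delta_y)u^{\infty}$ are bounded on $[t_0,T]\times\R^d$, so by~\eqref{eq:PDE} (which gives $\funcF({\bf u}^{\infty})=-(\tfrac{\partial}{\partial r}+\tfrac{1}{2}\Delta_y)u^{\infty}$) and~\eqref{eq:fLipschitz} also $\funcF(0)$ is bounded on $[t_0,T]\times\R^d$, and~\eqref{eq:ldeltaintegrandk} holds for every $k\in\N_0$ with $p=1$. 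Reading the hypotheses of Theorem~\ref{thm:rate} and Lemma~\ref{l:quad_error_glob} over $[t_0,T]$ instead of $[0,T]$ — harmless, since the displays in their proofs only evaluate $u^{\infty}$, $\funcF(0)$ and $g$ at times $\ge t_0$ once the outer time argument is $t_0$ — both results apply.

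Next I would bound the function $\eps$ from Theorem~\ref{thm:rate}. Fix $(t,z)\in[t_0,T]\times\R^d$ and $\nu\in\{1,\dots,d+1\}$. For $t=T$ the sum and integral defining $\eps(T,z)$ run over the empty set, so $\eps(T,z)=0$; for $t\in[t_0,T)$, Lemma~\ref{l:quad_error_glob} (applied with $s=t$, $x=z$) provides $\xi\in[t,T]^{d+1}$ with
\begin{equation*}
(\eps(t,z))_{\nu}=\left|(1,\nabla_x)_{\nu}\,\E\!\left[\big((\tfrac{\partial}{\partial r}+\tfrac{1}{2}\Delta_y)^{2Q+1}u^{\infty}\big)(\xi_{\nu},z+W_{\xi_{\nu}}^{0}-W_{t}^{0})\right]\right|\tfrac{[Q!]^{4}(T-t)^{2Q+1}}{(2Q+1)[(2Q)!]^{3}}.
\end{equation*}
Since $(\tfrac{\partial}{\partial r}+\tfrac{1}{2}\Delta_y)^{2Q+1}u^{\infty}$ and its spatial gradient are bounded on $[t_0,T]\times\R^d$ by $S\,((2Q+1)!)^{1-\alpha}$ (the $k=2Q+1$ instance in the definition of $S$), differentiation under the expectation is permitted and, together with $[t,T]\subseteq[t_0,T]$ and $(T-t)^{2Q+1}\le T^{2Q+1}$, yields $\sup_{(t,z)\in[t_0,T]\times\R^d}\|\eps(t,z)\|_\infty\le S\,((2Q+1)!)^{1-\alpha}[Q!]^{4}T^{2Q+1}\big((2Q+1)[(2Q)!]^{3}\big)^{-1}$.

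It remains to verify the purely arithmetic inequality $((2Q+1)!)^{1-\alpha}[Q!]^{4}\big((2Q+1)[(2Q)!]^{3}\big)^{-1}\le Q^{-2\alpha Q}$, i.e.\ $Q^{2\alpha Q}[Q!]^{4}\le(2Q+1)^{\alpha}[(2Q)!]^{2+\alpha}$. Using $\binom{2Q}{Q}\ge 4^{Q}(2Q+1)^{-1}$ — equivalently $[Q!]^{4}\le(2Q+1)^{2}16^{-Q}[(2Q)!]^{2}$ — and $(2Q)!\ge(2Q/e)^{2Q}$, this reduces to $(2Q+1)^{2-\alpha}\le\big(16(4/e^{2})^{\alpha}\big)^{Q}$; as the logarithm of the right-hand side is affine in $\alpha$, it suffices to check $\alpha=0$ (namely $(2Q+1)^{2}\le16^{Q}$) and $\alpha=1$ (namely $2Q+1\le(64/e^{2})^{Q}$), both holding for every $Q\in\N$. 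Inserting $\sup_{(t,z)\in[t_0,T]\times\R^d}\|\eps(t,z)\|_\infty\le T^{2Q+1}Q^{-2\alpha Q}S$ into the bound of Theorem~\ref{thm:rate} then produces~\eqref{eq:glob_error_nmq} verbatim. I expect the main friction to be the bookkeeping of the first two paragraphs — the interchange of $\nabla_x$ and $\E$ and the reduction to the time interval $[t_0,T]$ — rather than the factorial estimate, which is routine.
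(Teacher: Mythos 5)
Your proposal is correct and follows essentially the same route as the paper: apply Theorem~\ref{thm:rate}, control $\eps$ via Lemma~\ref{l:quad_error_glob}, then absorb the Gau\ss--Legendre error coefficient $\tfrac{[Q!]^4(T-t)^{2Q+1}}{(2Q+1)[(2Q)!]^3}$ into $T^{2Q+1}Q^{-2\alpha Q}$ by a factorial estimate. The one genuine difference lies in how that factorial estimate is proved. The paper invokes the two-sided Robbins--Stirling bounds to show directly that $\sup_{k\in\N}\tfrac{k^{2\alpha k}((2k+1)!)^{1-\alpha}[k!]^4}{(2k+1)[(2k)!]^3}\le 1$, whereas you use the elementary central-binomial bound $\binom{2Q}{Q}\ge\tfrac{4^Q}{2Q+1}$ together with $(2Q)!\ge(2Q/e)^{2Q}$, and then exploit affineness in $\alpha$ to reduce the inequality to the two endpoints $\alpha\in\{0,1\}$; this is arguably more self-contained (no precise Stirling constants needed) and the convexity reduction is a nice touch, at the cost of being slightly longer. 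You are also more explicit than the paper about two bookkeeping points the paper leaves implicit: the trivial $S=\infty$ case, and the fact that the hypotheses of Theorem~\ref{thm:rate} and Lemma~\ref{l:quad_error_glob} are phrased with suprema over $[0,T]$ while the corollary's right-hand side only involves $[t_0,T]$ — your remark that the relevant displays in those proofs only ever evaluate at times $\ge t_0$ is the right justification and closes a small gap that the paper's ``w.l.o.g.\ the RHS is finite'' argument glosses over. Similarly, you make explicit the differentiation under the expectation needed to pass from the $(1,\nabla_x)\E[\cdots]$ of Lemma~\ref{l:quad_error_glob} to the pointwise bound on $(1,\nabla_y)(\tfrac{\partial}{\partial r}+\tfrac12\Delta_y)^{2Q+1}u^\infty$, which the paper performs silently.
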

  
  \begin{proof}[Proof of Corollary \ref{c:glob_error_nmq}]
  To prove \eqref{eq:glob_error_nmq} we assume w.l.o.g.\ that the right-hand side of \eqref{eq:glob_error_nmq} is finite.
Observe that the Stirling-type formula in Robbins \cite[Displays (1)--(2)]{robbins1955remark} proves for all
$k\in \N$ that
\begin{equation}
\sqrt{2\pi k}\left[\frac{k}{e}\right]^k\le k! \le \sqrt{2\pi k}\left[\frac{k}{e}\right]^ke^{\frac{1}{12}}.
\end{equation}
This together with the fact that $e^2\le 8$ and the fact that $\forall \, k\in \N\colon \pi e^{\frac{1}{3}}k\le 8^{k}$ shows for all $k\in \N$ that
\begin{equation}\label{eq:stirling}
\begin{split}
\tfrac{k^{2\alpha k}((2k+1)!)^{1-\alpha}[k!]^4}{(2k+1)[(2k)!]^3}
&\le \tfrac{k^{2\alpha k}[k!]^4}{[(2k)!]^{2+\alpha}}
\le \tfrac{k^{2\alpha k}\left[\sqrt{2\pi}k^{k+\frac{1}{2}}e^{-k+\frac{1}{12}}\right]^4}
{\left[\sqrt{2\pi}(2k)^{2k+\frac{1}{2}}e^{-2k}\right]^{2+\alpha}}
=(\sqrt{2\pi})^{2-\alpha}k^{1-\frac{\alpha}{2}}e^{\frac{1}{3}}
2^{-(2k+\frac{1}{2})2}\big(\tfrac{e^{2k}}{2^{2k+\frac{1}{2}}}
\big)^{\alpha}
\\
&\le 2\pi k e^{\frac{1}{3}}2^{-4k-1}e^{2k}2^{-2k}
=\pi e^{\frac{1}{3}}k\big(\tfrac{e^2}{64}\big)^k
\le \pi e^{\frac{1}{3}}k8^{-k}
\le 1.
\end{split}
\end{equation}
 Theorem \ref{thm:rate} and Lemma \ref{l:quad_error_glob} ensure that
 \begin{equation} \begin{split}
   &  \left\| \left( {\bf U}_{n,M,Q}^{0}(t_0,x)-{\bf u}^{\infty}(t_0,x)\right)_{\nu_0}\right\|_{L^2(\P;\R)}
   \\&
    \leq
     \tfrac{7C^n2^{n-1}e^M}{\sqrt{M^{n-3}}}
    \left(
    \left[\sup_{(t,z)\in[t_0,T]\times \R^d}|(F(0))(t,z)|\right]
   + \left[\sup_{(t,z)\in [t_0,T]\times \R^d}\left\|{\bf u}^{\infty}(t,z)\right\|_\infty\right]  
    +\max\{\sqrt{T-t_0},\sqrt{3}\}\|K\|_1
    \right)
    \\&
     +(14 (4C)^{n-1}+1)
      \left[\sup_{(t,z)\in [t_0,T]\times \R^d}
   \left \|(1,\nabla_y)
    \left((\tfrac{\partial}{\partial r}+\tfrac{1}{2}\Delta_y)^{2Q+1}u^{\infty}\right)\!(t,z)\right \|_\infty
    \tfrac{[Q!]^4(T-t)^{2Q+1}}{(2Q+1)[(2Q)!]^3}
   \right]
  .
  \end{split}     \end{equation}  
   It follows that
   \begin{equation} \begin{split}
   &  \left\| \left( {\bf U}_{n,M,Q}^{0}(t_0,x)-{\bf u}^{\infty}(t_0,x)\right)_{\nu_0}\right\|_{L^2(\P;\R)}
   \\&
    \leq
 \tfrac{7C^n2^{n-1}e^M}{\sqrt{M^{n-3}}}
   \left(
    \left[\sup_{(t,z)\in[t_0,T]\times \R^d}|(F(0))(t,z)|\right]
   + \left[\sup_{(t,z)\in [t_0,T]\times \R^d}\left\|{\bf u}^{\infty}(t,z)\right\|_\infty\right]  
    +\max\{\sqrt{T-t_0},\sqrt{3}\}\|K\|_1
    \right)
    \\&
     +\tfrac{(14 (4C)^{n-1}+1)T^{2Q+1}}
     {Q^{2\alpha Q}}\left[
   \sup_{l\in\N}
    \tfrac{ l^{2\alpha l}((2l+1)!)^{1-\alpha}[l!]^4}{(2l+1)[(2l)!]^3}
    \right]
   \left[\sup_{k\in \N}\sup_{(t,z)\in [t_0,T]\times \R^d}\tfrac{
   \left \|(1,\nabla_y)
    \left((\tfrac{\partial}{\partial r}+\tfrac{1}{2}\Delta_y)^{k}u^{\infty}\right)\!(t,z)\right \|_\infty}
    {(k!)^{1-\alpha}}
   \right]    
 \\&
 \leq
   \tfrac{7C^n2^{n-1}e^M}{\sqrt{M^{n-3}}}
    \left(
    \left[\sup_{(t,z)\in[t_0,T]\times \R^d}|(F(0))(t,z)|\right]
   + \left[\sup_{(t,z)\in [t_0,T]\times \R^d}\left\|{\bf u}^{\infty}(t,z)\right\|_\infty\right]  
    +\max\{\sqrt{T-t_0},\sqrt{3}\}\|K\|_1
    \right)
    \\&
     +\tfrac{(14 (4C)^{n-1}+1)T^{2Q+1}}
     {Q^{2\alpha Q}}
   \left[\sup_{k\in \N}\sup_{(t,z)\in [t_0,T]\times \R^d}\tfrac{
   \left \|(1,\nabla_y)
    \left((\tfrac{\partial}{\partial r}+\tfrac{1}{2}\Delta_y)^{k}u^{\infty}\right)\!(t,z)\right \|_\infty}
    {(k!)^{1-\alpha}}
   \right]    
    .
  \end{split}     \end{equation}  
  This proves \eqref{eq:glob_error_nmq}. The proof of Corollary \ref{c:glob_error_nmq} is thus completed.
\end{proof} 
  
The following corollary (Corollary~\ref{c:glob_error_nnn})
specializes Corollary~\ref{c:glob_error_nmq}
to the special case $n=M=Q$ and $\alpha=\tfrac{1}{4}$.
For the choice of $\alpha$ note that the terms
$\sqrt{M}^{-n}$ and $Q^{-2\alpha Q}$ in the case
$n=M=Q\in\N\cap[2,\infty)$
are equal if and only if $\alpha=\tfrac{1}{4}$.
 \begin{corollary}\label{c:glob_error_nnn}
Assume the setting in Section~\ref{sec:setting.full.discretization},
assume that $u^\infty \in C^\infty([0,T]\times \R^d,\R)$,
let $n\in \N\cap [2,\infty)$, $\nu_0\in \{1,\ldots,d+1\}$, $(t_0,x)\in [0,T)\times \R^d$, and let $C\in [0,\infty)$ be the real number given by
\begin{equation}
C=2(\sqrt{T-t_0}+1)\sqrt{(T-t_0)\pi}\left(\|L\|_1+1\right)+1.
\end{equation}
 Then it holds that
   \begin{align}\label{eq:glob_error_nnn}
  &  \left\| \left( {\bf U}_{n,n,n}^{0}(t_0,x)-{\bf u}^{\infty}(t_0,x)\right)_{\nu_0}\right\|_{L^2(\P;\R)}
   \\&
    \leq
  \tfrac{7C^n2^{n-1}e^n}{\sqrt{n^{n-3}}}
    \left(
    \left[\sup_{(t,z)\in[t_0,T]\times \R^d}|(F(0))(t,z)|\right]
   + \left[\sup_{(t,z)\in [t_0,T]\times \R^d}\left\|{\bf u}^{\infty}(t,z)\right\|_\infty\right]  
    +\max\{\sqrt{T-t_0},\sqrt{3}\}\|K\|_1
    \right)
    \nonumber
    \\&
     +\tfrac{(14 (4C)^{n-1}+1)T^{2n+1}}
     {\sqrt{n^{n}}}
   \left[\sup_{k\in \N}\sup_{(t,z)\in [t_0,T]\times \R^d}\tfrac{
   \left \|(1,\nabla_y)
    \left((\tfrac{\partial}{\partial r}+\tfrac{1}{2}\Delta_y)^{k}u^{\infty}\right)\!(t,z)\right \|_\infty}
    {(k!)^{\nicefrac 34}}
   \right]    
    .  
    \nonumber
   \end{align}
 \end{corollary}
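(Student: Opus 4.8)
The plan is to obtain Corollary~\ref{c:glob_error_nnn} as a direct specialization of Corollary~\ref{c:glob_error_nmq}. First I would invoke Corollary~\ref{c:glob_error_nmq} with the particular parameter choices $M=Q=n$ and $\alpha=\tfrac14$. Its hypotheses are then automatically satisfied: since $n\in\N\cap[2,\infty)$ we have $M=n\in\N\cap[2,\infty)$ and $Q=n\in\N$, and $\alpha=\tfrac14\in[0,1]$; the remaining assumptions (namely $u^\infty\in C^\infty([0,T]\times\R^d,\R)$, $\nu_0\in\{1,\dots,d+1\}$, $(t_0,x)\in[0,T)\times\R^d$, and the very definition of the constant $C$) are inherited verbatim from the hypotheses of Corollary~\ref{c:glob_error_nnn}.

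It then remains only to rewrite the right-hand side of \eqref{eq:glob_error_nmq} under these substitutions. In the first summand, replacing $M$ by $n$ turns $\tfrac{7C^n2^{n-1}e^M}{\sqrt{M^{n-3}}}$ into $\tfrac{7C^n2^{n-1}e^n}{\sqrt{n^{n-3}}}$, and the bracketed factor involving $\sup|(F(0))|$, $\sup\|{\bf u}^\infty\|_\infty$, and $\|K\|_1$ is left unchanged. In the second summand, replacing $Q$ by $n$ gives $T^{2Q+1}=T^{2n+1}$ and $(k!)^{1-\alpha}=(k!)^{3/4}$, while the arithmetic identity $Q^{2\alpha Q}=n^{2\cdot\frac14\cdot n}=n^{n/2}=\sqrt{n^n}$ transforms $\tfrac{(14(4C)^{n-1}+1)T^{2Q+1}}{Q^{2\alpha Q}}$ into $\tfrac{(14(4C)^{n-1}+1)T^{2n+1}}{\sqrt{n^n}}$. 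Collecting these two rewritten terms yields exactly the bound claimed in \eqref{eq:glob_error_nnn}.

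There is no genuine obstacle here, since the statement is a pure specialization; the only point requiring a moment's care is the exponent bookkeeping $Q^{2\alpha Q}=\sqrt{n^n}$ for $Q=n$ and $\alpha=\tfrac14$, which is precisely the balancing of the two decay rates $\sqrt{M}^{-n}$ and $Q^{-2\alpha Q}$ flagged in the paragraph preceding the corollary as the reason for selecting $\alpha=\tfrac14$.
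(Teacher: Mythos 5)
Your proposal is correct and matches the paper's approach exactly: the paper introduces Corollary~\ref{c:glob_error_nnn} precisely as the specialization of Corollary~\ref{c:glob_error_nmq} to $M=Q=n$ and $\alpha=\tfrac14$, and gives no further argument. Your exponent bookkeeping $Q^{2\alpha Q}=n^{n/2}=\sqrt{n^n}$ is the one nontrivial substitution and it is carried out correctly.
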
 

The following main result of this article
(Corollary~\ref{c:computational_effort_glob_error})
proves that if the constant~\eqref{eq:constant_rate4} is finite,
then the computational complexity
(here measured in terms of the number of scalar normal random variables
and in terms of function evaluations of $f$ and $g$)
is bounded by $O(d\eps^{-(4+\delta)})$ for any $\delta\in(0,\infty)$
where $d$ is the dimensionality of the problem and $\eps\in(0,\infty)$
is the prescribed accuracy.
\begin{corollary}[Computational complexity in terms of
global error]\label{c:computational_effort_glob_error}
 Assume the setting in Subsection~\ref{sec:setting.full.discretization},
 assume that $u^\infty \in C^\infty([0,T]\times \R^d,\R)$,
 let $\delta \in (0,\infty)$, 
 let $C\in [0,\infty]$ be the extended
 real number
given by
 \begin{equation}\label{eq:constant_rate4}
\begin{split}
C&=
    \left(
    \sup_{(t,z)\in[0,T]\times \R^d}|(F(0))(t,z)|
    +\sqrt{T+3}\|K\|_1
   +
   \sup_{k\in \N_0}\tfrac{\sup_{(t,z)\in [0,T]\times \R^d}
   \left \|(1,\nabla_y)
    \left((\tfrac{\partial}{\partial r}+\tfrac{1}{2}\Delta_y)^{k}u^{\infty}\right)\!(t,z)\right \|_\infty}
    {(k!)^{\nicefrac 34}}
    \right)^{(4+\delta)},
  \end{split} \end{equation}
assume that $C<\infty$,
let $(\operatorname{RN}_{n,M,Q})_{n,M,Q\in\Z}\subseteq\N_0$ be natural numbers
which satisfy
for all $n,M,Q \in \N$
that $\operatorname{RN}_{0,M,Q}=0$
and
\begin{align}
  \operatorname{RN}_{ n,M,Q }
  &\leq d M^n+\sum_{l=0}^{n-1}\left[Q M^{n-l}( d + \operatorname{RN}_{ l, M,Q }+ \mathbbm{1}_{ \N }( l ) \cdot\operatorname{RN}_{ l-1, M,Q })\right]
\end{align}
(for every $N\in\N$ 
 we think of
 $\operatorname{RN}_{ N,N,N }$
 as the number of realizations of a scalar standard normal random variable required
 to compute one realization of the random variable
 $U^{0}_{N,N,N}(0,0)\colon\Omega\to\R$),
and let $( \operatorname{FE}_{n,M,Q})_{n,M,Q\in\Z}\subseteq\N_0$ be natural numbers
which satisfy
for all $n,M,Q \in \N$
that $\operatorname{FE}_{0,M,Q}=$
and
\begin{equation}  \begin{split}
  \operatorname{FE}_{ n,M,Q }
  &\leq M^n+\sum_{l=0}^{n-1}\left[Q M^{n-l}( 1 + \operatorname{FE}_{ l, M,Q }+ \mathbbm{1}_{ \N }( l )+ \mathbbm{1}_{ \N }( l ) \cdot\operatorname{FE}_{ l-1, M,Q })\right]
\end{split}     \end{equation}
(for every $N\in\N$ 
 we think of
 $\operatorname{FE}_{ N,N,N }$
 as the
 number of function evaluations of $f$ and $g$
 required
 to compute one realization of the random variable
 $U^{0}_{N,N,N}(0,0)\colon\Omega\to\R$).
 Then it holds for all $N\in \N$ that
 \begin{equation}  \begin{split}\label{eq:c.rate4}
 \operatorname{RN}_{N,N,N}+\operatorname{FE}_{N,N,N}
 \le &
 d \left[\sup_{(t,x)\in[0,T]\times\R^d}\max_{\nu\in\{1,\ldots,d+1\}}\left\|\left(
    {\bf U}_{N,N,N}^{0}(t,x)-{\bf u}^{\infty}(t,x)
    \right)_{\nu}
    \right\|_{L^2(\P;\R)}\right]^{-\left(4+\delta\right)}
    \\\cdot
    &16C
    \sum_{n\in\N}
    \big(24(T+1)\big)^{3(4+\delta)n}(\|L\|_{1}+1)^{(4+\delta)n}\sqrt{n}^{-\delta n}<\infty.
 \end{split}     \end{equation}
 \end{corollary}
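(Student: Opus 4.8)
The plan is to combine two estimates: an a priori upper bound on the cost, $\operatorname{RN}_{N,N,N}+\operatorname{FE}_{N,N,N}\le 2d(16N^2)^N$, derived from the defining recursions, and the root mean square error bound of Corollary~\ref{c:glob_error_nnn} with $\alpha=\tfrac14$, whose leading behaviour is $N^{-\nicefrac N2}$ up to exponential and polynomial corrections. The key point is that multiplying the cost by the $(4+\delta)$-th power of the error makes the two ``$N^N$''-type factors cancel, since $N^{2N}\cdot(N^{-\nicefrac N2})^{4+\delta}=N^{-\nicefrac{\delta N}2}$, and what remains is the $N$-th term of a convergent series $\sum_n a^n n^{-\nicefrac{\delta n}2}$ with $a\in(1,\infty)$.

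For the cost bound I would prove by induction on $n\in\N_0$ that $\operatorname{RN}_{n,M,Q}\le d(16MQ)^n$ and $\operatorname{FE}_{n,M,Q}\le(16MQ)^n$ for all $n,M,Q\in\N$ with $M\ge2$: inserting the hypothesis into the recursions, using $\1_\N(l)\operatorname{RN}_{l-1,M,Q}\le\operatorname{RN}_{l,M,Q}$ (and the analogue for $\operatorname{FE}$) to collapse the two level-$l$ terms, evaluating the geometric sum $\sum_{l=0}^{n-1}M^{n-l}(16QM)^l=M^n\sum_{l=0}^{n-1}(16Q)^l\le\tfrac{1}{8Q}(16QM)^n$, and checking that the resulting coefficient in front of $(16MQ)^n$ stays below $1$ (it turns out $\le\tfrac7{16}$ for $\operatorname{RN}$ and $\le\tfrac9{16}$ for $\operatorname{FE}$). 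Specialising $n=M=Q=N$ and adding gives $\operatorname{RN}_{N,N,N}+\operatorname{FE}_{N,N,N}\le 2d(16N^2)^N$ for $N\ge2$.

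For the error bound I would apply Corollary~\ref{c:glob_error_nnn} (with $\alpha=\tfrac14$) for each $(t_0,x)\in[0,T)\times\R^d$ and $\nu_0\in\{1,\dots,d+1\}$ and bound all constants uniformly in the base point: the constant $2(\sqrt{T-t_0}+1)\sqrt{(T-t_0)\pi}(\|L\|_1+1)+1$ is at most $\bar C:=8(T+1)(\|L\|_1+1)$; every $\sup$ over $[t_0,T]\times\R^d$ is at most the corresponding $\sup$ over $[0,T]\times\R^d$; $\max\{\sqrt{T-t_0},\sqrt3\}\le\sqrt{T+3}$; and, since $(1,\nabla_y)((\tfrac\partial{\partial r}+\tfrac12\Delta_y)^0u^\infty)={\bf u}^\infty$, both $\sup_{(t,z)}|F(0)(t,z)|+\sqrt{T+3}\|K\|_1+\sup_{(t,z)}\|{\bf u}^\infty(t,z)\|_\infty$ and $\sup_{k\in\N_0}\sup_{(t,z)}\|(1,\nabla_y)(\tfrac\partial{\partial r}+\tfrac12\Delta_y)^ku^\infty(t,z)\|_\infty/(k!)^{\nicefrac34}$ are bounded by $D:=C^{\nicefrac1{4+\delta}}$, the base of~\eqref{eq:constant_rate4}. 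For $N\in\N\cap[2,\infty)$ this yields
\begin{equation*}
E_N:=\sup_{(t,x)\in[0,T)\times\R^d}\;\max_{\nu\in\{1,\dots,d+1\}}\left\|\left({\bf U}^0_{N,N,N}(t,x)-{\bf u}^\infty(t,x)\right)_\nu\right\|_{L^2(\P;\R)}\le D\kappa_N
\end{equation*}
with $\kappa_N:=\tfrac{7\bar C^N2^{N-1}e^N}{\sqrt{N^{N-3}}}+\tfrac{(14(4\bar C)^{N-1}+1)T^{2N+1}}{\sqrt{N^N}}$, and, writing $\gamma:=\max\{2e\bar C,4\bar CT^2\}$ (so that $\gamma\le44(T+1)^3(\|L\|_1+1)$), elementary manipulations ($14(4\bar C)^{N-1}+1\le15(4\bar C)^{N-1}$ and $\tfrac{15T}{4\bar C}\le1\le N^{\nicefrac32}$) give $\kappa_N\le5N^{\nicefrac32}\gamma^N/\sqrt{N^N}$.

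Combining both bounds, using monotonicity of $x\mapsto x^{4+\delta}$ on $[0,\infty)$ and $D^{4+\delta}=C$, I obtain for $N\in\N\cap[2,\infty)$ that
\begin{equation*}
\bigl(\operatorname{RN}_{N,N,N}+\operatorname{FE}_{N,N,N}\bigr)E_N^{4+\delta}\le 2\cdot5^{4+\delta}\,dC\,16^N\,\gamma^{(4+\delta)N}\,N^{\nicefrac{3(4+\delta)}2}\,N^{-\nicefrac{\delta N}2}.
\end{equation*}
Since the prescribed base $(24(T+1))^3(\|L\|_1+1)$ exceeds $\gamma$ by a factor larger than $300$, the $(4+\delta)N$-th power of this slack absorbs the factor $2\cdot5^{4+\delta}16^N N^{\nicefrac{3(4+\delta)}2}$ for every $N\in\N$ (exponential with base $\gg16$ beats both $16^N$ and the polynomial in $N$), so the right-hand side is at most $16\,dC\,(24(T+1))^{3(4+\delta)N}(\|L\|_1+1)^{(4+\delta)N}\sqrt{N}^{-\delta N}$ — the $N$-th summand of the positive-term series in~\eqref{eq:c.rate4} — hence at most $16\,dC\sum_{n\in\N}(24(T+1))^{3(4+\delta)n}(\|L\|_1+1)^{(4+\delta)n}\sqrt{n}^{-\delta n}$. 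Dividing by $E_N^{4+\delta}>0$ proves~\eqref{eq:c.rate4} for $N\ge2$; for $N=1$ one uses instead the direct estimates $\operatorname{RN}_{1,1,1}\le2d$, $\operatorname{FE}_{1,1,1}\le2$ and $E_1\le D(4+T+\sqrt{2T})$, the latter following from~\eqref{eq:fLipschitz}, \eqref{eq:gLipschitz} and the boundedness of $u^\infty$, $\nabla_y u^\infty$ and $F(0)$ implied by $C<\infty$. The series converges (so that the right-hand side of~\eqref{eq:c.rate4} is finite) because its $n$-th term equals $\bigl((24(T+1))^{3(4+\delta)}(\|L\|_1+1)^{4+\delta}n^{-\nicefrac\delta2}\bigr)^n$, which tends to $0$ super-geometrically, and $C<\infty$ by assumption. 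The main obstacle is the error step: the constants of Corollary~\ref{c:glob_error_nnn} must be organised tightly enough that, after raising to the power $4+\delta$, they — together with the $16^N$ cost factor — still fit inside the prescribed constants $24$ and $\|L\|_1+1$ of~\eqref{eq:c.rate4}.
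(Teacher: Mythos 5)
Your proposal is correct and follows essentially the same route as the paper: bound the cost by an explicit $N$-dependent quantity, multiply by the $(4+\delta)$-th power of the error bound from Corollary~\ref{c:glob_error_nnn} (which already specializes Corollary~\ref{c:glob_error_nmq} to $\alpha=\tfrac14$), and observe that the $N^{2N}$-type cost factor and $(N^{-N/2})^{4+\delta}$-type error factor combine to the $N$-th summand of the convergent series in~\eqref{eq:c.rate4}. The two differences worth noting are the following. First, the paper obtains the cost bounds $\operatorname{RN}_{N,N,N}\le 8dN^{2N}$ and $\operatorname{FE}_{N,N,N}\le 8N^{2N}$ by citing Lemmas 3.15--3.16 of~\cite{EHutzenthalerJentzenKruse2016}, whereas you prove a self-contained (but geometrically weaker) bound $\operatorname{RN}_{n,M,Q}\le d(16MQ)^n$, $\operatorname{FE}_{n,M,Q}\le(16MQ)^n$ by induction; this costs you an extra factor $16^N$, which you then absorb into the generous constants $(24(T+1))^3(\|L\|_1+1)$ of the target series. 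Second, you explicitly treat the case $N=1$: this is actually a point where the paper's proof is slightly informal, since Corollary~\ref{c:glob_error_nnn} as stated requires $n\in\N\cap[2,\infty)$ (because Corollary~\ref{c:glob_error_nmq} needs $M\ge2$), yet the paper applies it ``for all $N\in\N$'' without comment. Your separate direct estimate for $N=1$ fills that small gap. Overall the proposal is a slightly more detailed and more self-contained write-up of the same argument, at the price of looser intermediate constants that nonetheless fit inside the final bound.
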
 
\begin{proof}[Proof of Corollary~\ref{c:computational_effort_glob_error}]
Lemma 3.15 and Lemma 3.16 in~\cite{EHutzenthalerJentzenKruse2016}
imply
that for all $N\in \N$ it holds that
$
\operatorname{RN}_{ N,N,N }
\leq 8 d N^{2N}
$
and 
$
\operatorname{FE}_{ N,N,N }
\leq  8 N^{2N}
$.
This  and Corollary~\ref{c:glob_error_nnn}
yield for all $N\in\N$ that
 \begin{equation} \begin{split}\label{eq:glob_error_comp_effort}
  &  (\textup{RN}_{N,N,N}+\textup{FE}_{N,N,N})\left[ \sup_{(t,x)\in[0,T]\times\R^d}\max_{\nu\in\{1,\ldots,d+1\}}
  \left\| \left( {\bf U}_{n,n,n}^{0}(t,x)-{\bf u}^{\infty}(t,x)\right)_{\nu_0}\right\|_{L^2(\P;\R)}
  \right]^{(4+\delta)}
   \\&
    \leq
    8(d+1) N^{2N}\cdot
    \left(
  \tfrac{7
    \left(
      2(\sqrt{T}+1)\sqrt{T\pi}\left(\|L\|_1+1\right)+1
    \right)^N2^{N-1}e^N}{\sqrt{N^{N-3}}}
     +\tfrac{(14 (
      8(\sqrt{T}+1)\sqrt{T\pi}\left(\|L\|_1+1\right)+4
     )^{N-1}+1)T^{2N+1}}{\sqrt{N^{N}}}
    \right)^{(4+\delta)}C
   \\&
    \leq
    8(d+1) N^{2N}\cdot
    \left(\left(24(T+1)\right)^{3N}(\|L\|_{1}+1)^N\sqrt{N}^{-N}\right)^{(4+\delta)}C
   \\&
    \leq
    16dC
    \sum_{n\in\N}
    \big(24(T+1)\big)^{3(4+\delta)n}(\|L\|_{1}+1)^{(4+\delta)n}\sqrt{n}^{-\delta n}
    .  
\end{split} \end{equation}
The right-hand side of~\eqref{eq:glob_error_comp_effort} 
is clearly finite.
This finishes the proof of Corollary~\ref{c:computational_effort_glob_error}.
\end{proof}

\subsubsection*{Acknowledgement}
This project has been partially supported 
by the Deutsche Forschungsgesellschaft (DFG) via RTG 2131 {\it High-dimensional Phenomena in Probability -- Fluctuations and Discontinuity}
and via research grant HU 1889/6-1.
 
\bibliographystyle{acm}%{amsalpha}
%\bibliography{../Bib/bibfile}
\bibliography{Bib/bibfile}

\def\cprime{$'$}
\begin{thebibliography}{10}

\bibitem{davis2007methods}
{\sc Davis, P.~J., and Rabinowitz, P.}
\newblock {\em Methods of numerical integration}.
\newblock Courier Corporation, 2007.

\bibitem{EHanJentzen2017}
{\sc E, W., Han, J., and Jentzen, A.}
\newblock Deap learning-based numerical methods for high-dimensional parabolic
  partial differential equations and backward stochastic differential
  equations.
\newblock {\em arXiv:1706.03295\/} (2017).

\bibitem{EHutzenthalerJentzenKruse2016}
{\sc E, W., Hutzenthaler, M., Jentzen, A., and Kruse, T.}
\newblock Linear scaling algorithms for solving high-dimensional nonlinear
  parabolic differential equations.
\newblock {\em arXiv:1607.03295\/} (2016).

\bibitem{EHutzenthalerJentzenKruse2017}
{\sc E, W., Hutzenthaler, M., Jentzen, A., and Kruse, T.}
\newblock On multilevel {P}icard numerical approximations for high-dimensional
  nonlinear parabolic partial differential equations and high-dimensional
  nonlinear backward stochastic differential equations.
\newblock {\em arXiv:1708.03223\/} (2017).

\bibitem{Fournie1999}
{\sc Fourni{\'e}, E., Lasry, J.-M., Lebuchoux, J., Lions, P.-L., and Touzi, N.}
\newblock Applications of {M}alliavin calculus to {M}onte {C}arlo methods in
  finance.
\newblock {\em Finance and Stochastics 3}, 4 (Aug 1999), 391--412.

\bibitem{Heinrich2006}
{\sc Heinrich, S.}
\newblock The randomized information complexity of elliptic {PDE}.
\newblock {\em J. Complexity 22}, 2 (2006), 220--249.

\bibitem{Henry-Labordere2012}
{\sc Henry-Labord{\`e}re, P.}
\newblock Counterparty risk valuation: a marked branching diffusion approach.
\newblock {\em arXiv:1203.2369\/} (2012), 17 pages.

\bibitem{Henry-LabordereEtAl2016}
{\sc Henry-Labordere, P., Oudjane, N., Tan, X., Touzi, N., and Warin, X.}
\newblock Branching diffusion representation of semilinear {PDEs} and {Monte
  Carlo} approximation.
\newblock {\em arXiv preprint arXiv:1603.01727\/} (2016).

\bibitem{Henry-LabordereTanTouzi2014}
{\sc Henry-Labord{\`e}re, P., Tan, X., and Touzi, N.}
\newblock A numerical algorithm for a class of {BSDE}s via the branching
  process.
\newblock {\em Stochastic Process. Appl. 124}, 2 (2014), 1112--1140.

\bibitem{robbins1955remark}
{\sc Robbins, H.}
\newblock A remark on {S}tirling's formula.
\newblock {\em The American Mathematical Monthly 62}, 1 (1955), 26--29.

\end{thebibliography}

\end{document}